\newtheorem{Theorem}{Theorem}
\newtheorem{Lemma}{Lemma}
\newtheorem{Cor}{Corollary}
\newtheorem{Prop}{Proposition}
\newtheorem{Ex} {Example}
\newtheorem{Prob} {Problem}
\newtheorem{Conj}{Conjecture}
\newtheorem{theo+}           {Theorem}
\newtheorem{prop+}           {Proposition}
\newtheorem{coro+}           {Corollary}
\newtheorem{lemm+}           {Lemma}
\theoremstyle{definition}
\newtheorem{defi+}           {Definition}
\newtheorem{not+}            {Notation} 
\theoremstyle{remark}
\newtheorem{rema+}           {Remark}
\newenvironment{definition}{\begin{defi+}}{\end{defi+}}
\newcommand{\al}{\alpha}
\newcommand{\la}{\lambda}
\newcommand {\bC} {\mathbb {C}}
\newcommand {\bR} {\mathbb R}
\newcommand {\bZ} {\mathbb Z}
\newcommand {\Z} {\mathcal Z}
\newcommand {\DD} {\mathbf D}
\newcommand {\bCP} {\mathbb {CP}^1}
\newcommand {\Ga} {\Gamma}
\newcommand {\ga} {\gamma}
\newcommand {\eps} {\epsilon}
\newcommand {\de} {\delta}
\newcommand {\supp} {\mathrm supp~}
\begin{document}
             \numberwithin{equation}{section}

             \title[Homogenized spectral problems]{Homogenized spectral problems for exactly solvable operators: asymptotics of \\ polynomial eigenfunctions}

	  \author[J.~Borcea]{Julius  Borcea$^*$}
             \author[R.~B\o gvad]{Rikard  B\o gvad}
\author[B.~Shapiro]{Boris Shapiro}

\thanks{Communicated by M.~Kashiwara. Received March 15, 2008. Revised 
July 31, 2008}
\thanks{2000 Mathematics Subject Classification(s): 30C15, 31A35, 34E05}
\thanks{Department of Mathematics, Stockholm University, SE-106 91 Stockholm,
            Sweden}
\thanks{E-mail addresses: julius@math.su.se, rikard@math.su.se, 
shapiro@math.su.se}
\thanks{$^*$ Corresponding author}

\begin{abstract}
Consider a homogenized spectral pencil of exactly solvable linear 
differential operators $T_{\la}=\sum_{i=0}^k Q_{i}(z)
\la^{k-i}\frac {d^i}{dz^i}$,
where each $Q_{i}(z)$ is a polynomial of degree at most
$i$ and $\la$ is the spectral parameter.
We show that under mild nondegeneracy assumptions  for all
sufficiently large positive integers $n$ there exist
          exactly $k$ distinct values
$\la_{n,j}$, $1\le j\le k$, of the
spectral parameter $\la$ such that the operator
$T_{\la}$ has a polynomial eigenfunction
$p_{n,j}(z)$ of degree $n$. These 
eigenfunctions split into $k$ different families according to the
asymptotic behavior of their eigenvalues. We conjecture and prove sequential 
versions of three fundamental properties: the limits
$\Psi_{j}(z)=\lim_{n\to\infty} \frac
{p_{n,j}'(z)}{\la_{n,j}p_{n,j}(z)}$ exist, are 
analytic and satisfy  the
algebraic equation
$\sum_{i=0}^k Q_{i}(z) \Psi_{j}^i(z)=0$ almost everywhere in $\bCP$. As a 
consequence we obtain a
class of algebraic functions possessing a branch
   near $\infty\in \bCP$ which is representable as the Cauchy
transform of  a compactly supported probability measure.
\end{abstract}

\maketitle

\tableofcontents

\section{Introduction and main results}

In this paper we study the 
properties of asymptotic root-counting measures for families of (nonstandard) polynomial
eigenfunctions of exactly solvable  linear differential operators. 
Using this  information  we  describe a class of
algebraic functions possessing a branch
   near $\infty\in \bCP$ which is representable (up to a constant
   factor) as the Cauchy
transform of a compactly supported probability measure.


	 \begin {not+}
	A linear
ordinary differential
             operator $T:=\sum_{i=1}^{k}a_{i}(z)\frac{d^i}{dz^i}$ is called
             {\em exactly
             solvable} or {\em ES} for short if $T$ (nonstrictly) preserves the infinite
             flag $\mathcal P_{0}\subset \mathcal P_{1}
             \subset \mathcal P_{2}\subset \ldots$, where $\mathcal 
P_{i}$ denotes
             the
             linear space of  polynomials in $z$ of degree at most 
$i$, see \cite
{Tu}.

             The well-known classification theorem of S.~Bochner, see \cite
             {Bo} and \cite {Kr}, states that each coefficient
$a_{i}(z)$ of
             an exactly
             solvable $T$ is a polynomial of degree at most $i$.

             $T$ is called {\em strictly exactly solvable} if the flag
             $\mathcal P_{0}\subset \mathcal P_{1}
             \subset \mathcal P_{2}\subset \ldots$ is strictly preserved. One can show that the
             coefficients $a_{i}(z):=\sum_{j=0}^ia_{i,j}z^j$ of a strictly exactly solvable $T$ satisfy the
             condition that the equation $\sum a_{i,i}t(t-1)\ldots(t-i+1)=0$ has
no positive integer solutions, see Lemma \ref{lm:discr} below.

             Finally, let $ES_{k}$ denote the linear space of all $ES$-operators
of order at
             most $k$.
\end {not+}

             Consider a spectral pencil of $ES_k$-operators
            $T_{\la}:=\sum_{i=0}^{k}a_{i}(z,\la)\frac{d^i}{dz^i}$, i.e., a
            parametrized curve  $T:\bC\to ES_{k}$.
              Each value of the parameter $\la$ for which the
            equation $T_{\la}p(z)=0$ has a polynomial solution
            is called a {\em (generalized) eigenvalue} and the
            corresponding polynomial solution is called a {\em (generalized)
            eigenpolynomial}.

            The main problem that we address in this paper is as follows.

            \begin{Prob} Given a
            spectral pencil $T_{\la}$ describe the asymptotics of its
eigenvalues and
            the asymptotics of the root distribution of the corresponding
             eigenpolynomials.
             \label{pr:2}
                \end{Prob}
Motivated by the necessities of the   asymptotic theory of linear ordinary differential equations,
see e.g.~\cite[Ch.~5]{Fe},  we  concentrate below on the fundamental special
             case of  {\em homogenized spectral pencils}, i.e., rational
             normal curves in $ES_{k}$ of the form
             \begin{equation}
             T_{\la}=\sum_{i=0}^kQ_{i}(z)\la^{k-i}\frac{d^i}{dz^i},
             \label{eq:diff}
             \end{equation}
             where each $Q_{i}(z)$ is a polynomial of degree at most $i$.
Consider the algebraic curve $\Ga$  given by the
          equation
          \begin{equation}
          \sum_{i=0}^k Q_{i}(z)w^i=0,
          \label{eq:basic}
          \end{equation}
          where the polynomials
          $Q_{i}(z)=\sum_{j=0}^{i}a_{i,j}z^j$ are
          the same as in (\ref{eq:diff}).
          Given a curve $\Ga$ as in (\ref{eq:basic}) and the pencil $T_{\la}$
          as in (\ref{eq:diff}) with  the same coefficients $Q_{i}(z)$, $0\le i\le k$,  we
          call $\Ga$  the {\em plane curve associated with $T_{\la}$} and we
          say that $T_{\la}$ is the {\em spectral pencil associated with $\Ga$}.

          The curve $\Ga$ and its associated pencil $T_{\la}$ are called of
{\em general type} if the
          following two nondegeneracy requirements are satisfied:
\begin{itemize}
          \item[(i)] $\deg Q_{k}(z)=k$ (i.e., $a_{k,k}\neq 0$),
\item[(ii)] no two roots of the (characteristic) equation
          \begin{equation}
          a_{k,k}+a_{k-1,k-1}t+\ldots+a_{0,0}t^k=0
          \label{eq:char}
          \end{equation}
          lie on a line through the origin (in particular, $0$ is
          not a root of (\ref{eq:char})).
\end{itemize}


          The first statement of the paper is as follows.

             \begin{Prop} If the characteristic equation (\ref{eq:char}) has
	$k$ distinct solutions $\al_{1},\al_{2},$ $\ldots,\al_{k}$ and satisfies the preceding nondegeneracy assumptions (in particular, these imply that $a_{0,0}\neq 0$ and $a_{k,k}\neq 0$) then
\begin{itemize}
            \item[(i)]  for all
             sufficiently large $n$
there exist exactly $k$ distinct eigenvalues
$\la_{n,j}$, $j=1,\ldots,k$,  such that the associated spectral pencil
$T_{\la}$ has a polynomial eigenfunction
$p_{n,j}(z)$ of degree exactly $n$,
\item[(ii)] the eigenvalues $\la_{n,j}$ split into $k$ distinct
families labeled by the roots of (\ref{eq:char})
such that the eigenvalues in the $j$-th family satisfy
              $$\lim_{n\to \infty}\frac{\la_{n,j}}{n}=\al_{j},\quad j=1,\ldots,k.$$
\end{itemize}
\label{lm:basic}
               \end{Prop}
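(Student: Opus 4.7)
The plan is to reduce the problem to the spectral analysis of a single upper triangular matrix on $\mathcal P_n$ and then carry out an asymptotic perturbation argument. Since $Q_{i}(z)(z^m)^{(i)}$ has degree at most $m$ for every $i$, the operator $T_{\la}$ preserves each $\mathcal P_n$, and in the monomial basis $\{1,z,\ldots,z^n\}$ its matrix is upper triangular with $m$-th diagonal entry
\begin{equation*}
D_m(\la):=\sum_{i=0}^k a_{i,i}\,\la^{k-i}\,m(m-1)\cdots(m-i+1).
\end{equation*}
Extracting the coefficient of $z^n$ in $T_{\la}p=0$ for any polynomial $p$ of exact degree $n$ yields the necessary condition $D_n(\la)=0$, a polynomial equation of degree $k$ in $\la$ whose leading coefficient $a_{0,0}$ is nonzero by the hypothesis that (\ref{eq:char}) has $k$ roots.

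For the asymptotics I would substitute $\la=n\mu$ and divide by $n^k$:
\begin{equation*}
\frac{D_n(n\mu)}{n^k}=\sum_{i=0}^k a_{i,i}\,\mu^{k-i}\prod_{\ell=0}^{i-1}\!\left(1-\frac{\ell}{n}\right)\;\longrightarrow\;f(\mu):=\sum_{i=0}^k a_{i,i}\,\mu^{k-i},
\end{equation*}
uniformly on compacta, where $f(\mu)=0$ is exactly the characteristic equation (\ref{eq:char}). Since $f$ has $k$ simple nonzero roots $\al_1,\ldots,\al_k$, Hurwitz's theorem (or the implicit function theorem at each simple root) yields, for all sufficiently large $n$, exactly $k$ distinct roots $\la_{n,j}$ of $D_n$ with $\la_{n,j}/n\to\al_j$; this is part (ii) and also gives the upper bound of $k$ distinct eigenvalues in (i).

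To obtain the lower bound in (i), i.e.\ that each $\la_{n,j}$ actually admits an eigenpolynomial of degree \emph{exactly} $n$, it suffices to know $D_m(\la_{n,j})\neq 0$ for all $0\le m<n$. Indeed, the upper triangular form then makes $T_{\la_{n,j}}$ a bijection of $\mathcal P_{n-1}$; since $D_n(\la_{n,j})=0$ forces $T_{\la_{n,j}}z^n\in \mathcal P_{n-1}$, we may write $T_{\la_{n,j}}z^n=T_{\la_{n,j}}q$ with $q\in \mathcal P_{n-1}$, and $z^n-q$ is the sought eigenpolynomial. This non-collision between eigenvalues at different levels is the main obstacle. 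A one-term sharpening of the perturbation gives $\la_{n,j}=n\al_j+c_j+O(1/n)$ for explicit constants $c_j$. Assuming $\la_{n,j}=\la_{m,j'}$ with $m<n$, split into two cases: when $m$ stays bounded, $|\la_{n,j}|\to\infty$ already gives a contradiction; when $m\to\infty$, the equation $n\al_j-m\al_{j'}=O(1)$ is incompatible with (ii) for $j\neq j'$ (since $\al_j,\al_{j'}$ are then $\bR$-linearly independent, so the $2\times 2$ system bounds $(n,m)$), while for $j=j'$ the sharper expansion forces $(n-m)\al_j=O(1/n)$, hence $m=n$ eventually, again contradicting $m<n$.
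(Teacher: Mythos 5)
Your proposal is correct and follows essentially the same route as the paper: both extract the degree-$k$ determining equation $D_n(\la)=0$ from the diagonal of the triangular matrix of $T_\la$ acting on $\mathcal P_n$, rescale $\la$ by $n$ to pass to the characteristic equation (\ref{eq:char}), and then rule out collisions $D_m(\la_{n,j})=0$ for $m\neq n$ using the hypothesis that no two roots $\al_j$ lie on a line through the origin. The only cosmetic differences are that the paper treats $n$ as a complex variable and invokes univalence of the branches $n\mapsto\la_{n,j}$ near infinity where you use the sharper expansion $\la_{n,j}=n\al_j+c_j+O(1/n)$, and that you handle the case of bounded $m$ explicitly, which the paper leaves implicit.
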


\begin {Theorem} In the notation of Proposition \ref{lm:basic} 
for any pencil $T_{\la}$ of general type and every $j=1,\ldots,k $ 
there exists a subsequence $\{n_{i,j}\}$, $i=1,2,\ldots,$ such that the limits
$$\Psi_{j}(z):=\lim_{i\to\infty} \frac
{p_{n_{i,j}}'(z)}{\la_{n_{i,j}}p_{n_{i,j}}(z)},\quad j=1,\ldots,k,$$ exist almost everywhere in $\bC$ and are analytic functions in some
neighborhood of  $\infty$. Each $\Psi_{j}(z)$ 
satisfies equation (\ref{eq:basic}), i.e., 
$\sum_{i=0}^k Q_{i}(z) \Psi_{j}^i(z)=0$ almost everywhere in $\bC$, and the
functions
$\Psi_{1}(z),\ldots,\Psi_{k}(z)$ are independent sections of $\Ga$ 
considered as a branched covering
over $\bCP$ in a sufficiently small neighborhood of $\infty$. 
\label{th:main}
\end{Theorem}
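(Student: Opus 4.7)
My plan is to pass to the limit in a renormalised version of the eigenvalue equation $T_{\la_{n,j}}p_{n,j}=0$. Let $f_{n,j}(z):=p_{n,j}'(z)/(\la_{n,j}\,p_{n,j}(z))$. An easy induction on $i$ (starting from $p_{n,j}'=\la_{n,j}f_{n,j}p_{n,j}$ and differentiating) yields
\[
\la_{n,j}^{-i}\,\frac{p_{n,j}^{(i)}(z)}{p_{n,j}(z)} \;=\; f_{n,j}(z)^i + \la_{n,j}^{-1}\,r_{n,j,i}(z),
\]
with $r_{n,j,i}$ a polynomial expression in $f_{n,j}$, its $z$-derivatives of order $\le i-1$, and $\la_{n,j}^{-1}$. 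Dividing the eigenvalue equation by $\la_{n,j}^k p_{n,j}$ therefore produces the key identity
\[
\sum_{i=0}^k Q_i(z)\,f_{n,j}(z)^i \;=\; \la_{n,j}^{-1}\,S_{n,j}(z),
\]
where $S_{n,j}$ is polynomial in $\la_{n,j}^{-1}$, in $f_{n,j}$, and in its $z$-derivatives, with coefficients polynomial in $z$; in particular $\la_{n,j}^{-1}S_{n,j}\to 0$ locally uniformly on any open set on which $f_{n,j}$ and its $z$-derivatives stay locally uniformly bounded. What remains is to find a subsequence along which $f_{n,j}$ converges.

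For this I switch to the root-counting probability measures $\mu_{n,j}:=n^{-1}\sum_{\zeta} \de_\zeta$ of $p_{n,j}$, for which $f_{n,j}=(n/\la_{n,j})\,\mathcal{C}_{\mu_{n,j}}$ with $\mathcal{C}_\mu(z):=\int(z-\zeta)^{-1}d\mu(\zeta)$. Using the fact (established earlier in the paper) that the zeros of $p_{n,j}$ remain in a fixed compact $K\subset\bC$ uniformly in $n$, every $\mu_{n,j}$ is supported in $K$, so Helly selection furnishes a subsequence $\mu_{n_{i,j},j}\to \mu_j$ weakly for some probability measure $\mu_j$ on $K$. Standard properties of the Cauchy transform then give $\mathcal{C}_{\mu_{n_{i,j},j}}\to \mathcal{C}_{\mu_j}$ locally uniformly on $U:=\bCP\setminus K$ (by continuity of the kernel there) and in $L^1_{\mathrm{loc}}(\bC)$, hence a.e.\ in $\bC$ after extracting a further subsequence. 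Combined with Proposition~\ref{lm:basic}(ii), which gives $n_{i,j}/\la_{n_{i,j},j}\to 1/\al_j$, this delivers $f_{n_{i,j},j}\to \Psi_j:=(1/\al_j)\,\mathcal{C}_{\mu_j}$ locally uniformly on $U$ and almost everywhere on $\bC$.

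To pass to the limit in the algebraic equation I use that $f_{n_{i,j},j}\to\Psi_j$ locally uniformly on $U$ with $\Psi_j$ analytic there; by the Weierstrass convergence theorem all $z$-derivatives of $f_{n_{i,j},j}$ converge locally uniformly as well, so the boundedness criterion from paragraph 1 gives $\la_{n_{i,j},j}^{-1}S_{n_{i,j},j}\to 0$ locally uniformly on $U$. The key identity then yields $\sum_{i=0}^k Q_i(z)\Psi_j(z)^i=0$ on $U$. Since $\Psi_j$ is analytic on $\bC\setminus\mathrm{supp}\,\mu_j\supseteq U$, analytic continuation extends the equation to $\bC\setminus\mathrm{supp}\,\mu_j$, and hence to almost every $z\in\bC$, given that $\mathrm{supp}\,\mu_j$ has Lebesgue measure zero (as is standard for root-counting limit measures in this setting).

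For the independence of $\Psi_1,\ldots,\Psi_k$ as sections of $\Ga\to\bCP$ in a neighborhood of $\infty$, the elementary expansion $p_{n,j}'(z)/p_{n,j}(z)=n/z+O(|z|^{-2})$ at $\infty$ combined with $\la_{n,j}/n\to\al_j$ gives $\Psi_j(z)=(\al_j z)^{-1}+O(|z|^{-2})$, and the pairwise distinctness of $\al_1,\ldots,\al_k$ forced by general-type hypothesis (ii) separates the $k$ sheets of $\Ga$ over a neighborhood of $\infty$. The main obstacle I anticipate is the uniform boundedness of the zeros of $p_{n,j}$: this is a genuine structural statement about ES pencils of general type rather than a formal manipulation, and it is what simultaneously legitimises the tightness used in Helly selection and the local-uniform convergence of $f_{n_{i,j},j}$ on $U$.
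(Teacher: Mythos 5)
Your argument is sound wherever you have locally uniform convergence, i.e., on $U=\bCP\setminus K$: there the combination of Theorem~\ref{th:c}, Helly selection, locally uniform convergence of the Cauchy transforms, the Weierstrass theorem for the derivatives, and the Riccati identity of Proposition~\ref{dlog} correctly yields $\sum_{i}Q_i\Psi_j^i=0$, and your expansion $\Psi_j(z)=(\al_j z)^{-1}+O(|z|^{-2})$ correctly identifies the $k$ limits with the $k$ distinct branches of $\Ga$ near $\infty$ (this is a legitimate shortcut past the paper's route through the formal recursion of \S\ref{sec:3}--4, which is needed anyway to prove Theorem~\ref{th:c} but which you are entitled to quote). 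The genuine gap is the passage from $U$ to ``almost everywhere in $\bC$''. Analytic continuation only propagates the identity to the connected component of $\bC\setminus\mathrm{supp}\,\mu_j$ containing $\infty$; the complement of the support need not be connected (its connectedness is only conjectured in \S\ref{sec:problems}), so bounded components are not reached. Worse, the assertion that $\mathrm{supp}\,\mu_j$ has Lebesgue measure zero is not ``standard'' for weak limits of root-counting measures --- such limits can perfectly well have two-dimensional support --- and in this paper the vanishing of the area is derived as a \emph{consequence} of the Cauchy transform satisfying (\ref{eq:basic}) a.e.\ (Proposition~\ref{pr:main}), so invoking it here is circular. Note also that you cannot instead pass to the limit in the Riccati identity a.e.\ in $\bC$ directly: the error term $\la_{n,j}^{-1}S_{n,j}$ involves derivatives of $f_{n,j}$ up to order $k-1$, and a.e.\ convergence of $f_{n,j}$ gives no control of these derivatives near the accumulating zeros.

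The paper closes exactly this gap by a different mechanism (Proposition~\ref{lm:hans2}): one extracts a subsequence along which the root-counting measures of \emph{all} derivatives $p_{n,j}^{(i)}$, $i=0,\dots,k$, converge (Gauss--Lucas keeps their supports in a fixed compact set), observes that each ratio $p_{n,j}^{(i+1)}/\bigl((n-i)p_{n,j}^{(i)}\bigr)$ is itself the Cauchy transform of a probability measure and hence converges in $L^1_{loc}$, therefore a.e.\ after a further extraction, and rewrites the differential equation as a telescoping product of such ratios. This yields $\sum_i \al_j^{k-i}Q_i\,C^{(1)}\cdots C^{(i)}=0$ a.e.\ in all of $\bC$ with no derivative estimates needed. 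The remaining work --- showing that the factors $C^{(i)}$ are a.e.\ nonzero (which uses $Q_0\neq 0$ and $\al_j\neq 0$) and then using the potential-theoretic comparison $u'\le u$ of Lemma~\ref{lm:hans1} to force all the limiting measures to coincide --- is what ultimately identifies $\Psi_j$ with $C_{\mu_j}/\al_j$ and gives the equation almost everywhere. You should replace your analytic-continuation step by an argument of this kind.
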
 

As we explain in \S \ref{sec:5}, a key ingredient in the proof of Theorem~\ref{th:main} is the following localization result for  the roots of the above eigenpolynomials. 

\medskip 
\begin {Theorem} For any general
type pencil $T_{\la}$ all the roots of all its polynomial 
eigenfunctions $p_{n,j}(z)$ lie in a certain disk in $\bC$ centered at 
the origin. 
\label{th:c}
\end{Theorem}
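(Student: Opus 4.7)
The plan is a proof by contradiction via rescaling. Suppose, for some fixed $j\in\{1,\ldots,k\}$, that there exist subsequences $n_i\to\infty$ and roots $z_i$ of $p_{n_i,j}$ with $M_i:=|z_i|\to\infty$; take $z_i$ to be a root of maximum modulus. Rescale by $\widetilde p_i(w):=M_i^{-n_i}p_{n_i,j}(M_iw)$, a monic degree-$n_i$ polynomial with all zeros in the closed unit disk and at least one on the unit circle. Substituting $z=M_iw$ into $T_{\lambda_{n_i,j}}p_{n_i,j}=0$ yields
\[
  \sum_{l=0}^k\frac{Q_l(M_iw)}{M_i^{\,l}}\,\lambda_{n_i,j}^{\,k-l}\,\widetilde p_i^{(l)}(w)=0,
\]
and since $M_i^{-l}Q_l(M_iw)\to a_{l,l}w^l$ uniformly on compacts, the relevant limit of $T_\lambda$ at infinity is the Euler-type operator $\widetilde T_\lambda:=\sum_l a_{l,l}w^l\lambda^{k-l}(d/dw)^l$. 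Its polynomial eigenfunctions are pure monomials $w^m$, with eigenvalues coinciding exactly with the $\lambda_{m,1},\ldots,\lambda_{m,k}$ defined by Proposition~\ref{lm:basic}(i); the heuristic picture is therefore that the zeros of $\widetilde p_i$ must collapse to $w=0$, which contradicts the existence of a root on $|w|=1$.

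To make this quantitative I would study the normalized Cauchy transforms $\widetilde C_i(w):=\widetilde p_i'(w)/(n_i\widetilde p_i(w))$, which on every annulus $\{|w|\ge 1+\delta\}$ are uniformly bounded (they are the Cauchy transforms of the rescaled root-counting probability measures $\widetilde\mu_i:=n_i^{-1}\sum_l\delta_{z_l/M_i}$) and hence form a normal family there. Using the Fa\`a di Bruno / Bell-polynomial identity $\widetilde p_i^{(l)}/\widetilde p_i=(n_i\widetilde C_i)^l+O(n_i^{l-1})$, dividing the rescaled ODE by $\lambda_{n_i,j}^k\widetilde p_i$, and combining with $\lambda_{n_i,j}/n_i\to\alpha_j$ from Proposition~\ref{lm:basic}(ii), any accumulation point $\widetilde C$ of the family must satisfy
\[
  \sum_{l=0}^k a_{l,l}\,w^l\bigl(\widetilde C(w)/\alpha_j\bigr)^l=0.
\]
Setting $t:=w\widetilde C(w)/\alpha_j$ reduces this to the characteristic equation~(\ref{eq:char}); together with the normalization $\widetilde C(w)=1/w+O(1/w^2)$ at infinity (from monicity and degree $n_i$) one is forced into the branch $t=1/\alpha_j$, which is a bona fide root of~(\ref{eq:char}) since $P(\alpha_j)=0$. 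Hence $\widetilde C(w)\equiv 1/w$ and $\widetilde\mu_i\rightharpoonup\delta_0$.

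The main obstacle is that $\widetilde\mu_i\rightharpoonup\delta_0$ does not by itself preclude a vanishing fraction of rogue roots near the unit circle, so the contradiction is not yet immediate. To close the gap I would upgrade $\widetilde C_i\to 1/w$ to local uniform convergence on an open annulus surrounding $|w|=1$ via the normal-family bound and a Vitali-type argument, using a slightly enlarged rescaling $M_i=(1+\eta_i)\max_l|z_l|$ with $\eta_i\to 0^+$ to guarantee that the annular region is root-free, and then apply a Hurwitz/Rouch\'e argument to the identity $\widetilde p_i(w)/w^{n_i}=\exp\bigl(n_i\int_\infty^w(\widetilde C_i(\zeta)-1/\zeta)\,d\zeta\bigr)$ to force $\widetilde p_i$ to have no zero on a neighborhood of $|w|=1$ for large $i$, contradicting $|w_i|=1/(1+\eta_i)$. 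The hardest step is controlling the $O(n_i^{l-1})$ remainders in the Bell-polynomial expansion uniformly enough to survive this exponentiation; this is where subleading information from Proposition~\ref{lm:basic}—beyond the leading asymptotics $\lambda_{n,j}/n\to\alpha_j$—would have to be extracted.
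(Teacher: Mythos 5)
There is a genuine gap, and it is exactly the one you flag yourself: the weak convergence $\widetilde\mu_i\rightharpoonup\delta_0$ (equivalently, $\widetilde C_i\to 1/w$ locally uniformly on $\{|w|>1\}$) is compatible with a single rogue root sitting on the circle $|w|=1/(1+\eta_i)$, and your proposed Hurwitz/Rouch\'e patch cannot remove it. Concretely, if all but one of the rescaled roots collapse to $0$ and one root $w_i$ stays at modulus $\approx 1$, then
$\widetilde C_i(\zeta)-1/\zeta=\tfrac{1}{n_i}\bigl(\tfrac{1}{\zeta-w_i}-\tfrac{1}{\zeta}\bigr)+o(1/n_i)$,
so the exponent $n_i\int_\infty^w(\widetilde C_i(\zeta)-1/\zeta)\,d\zeta$ in your identity converges to $\log(1-w_i/w)$ rather than to $0$: the quantity $\widetilde p_i(w)/w^{n_i}$ stays an $O(1)$ distance from $1$, and no contradiction with $|w_i|\to 1$ arises. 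The single outlier is invisible to every normalized quantity your argument controls ($\widetilde\mu_i$, $\widetilde C_i$, and the limiting algebraic equation for $\widetilde C$), so the compactness/rescaling scheme cannot close by itself; some per-polynomial, non-averaged input is required. (A smaller point: the equation $\sum_l a_{ll}t^l=0$ you arrive at is the reciprocal characteristic equation (\ref{lambdadef}), with roots $1/\alpha_i$, not (\ref{eq:char}) itself; this does not affect the conclusion $t\equiv 1/\alpha_j$.)

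This is precisely where the paper's proof differs in kind. Instead of a limiting-measure argument, it proves a statement about each individual eigenpolynomial: the normalized logarithmic derivative $L_n=p_n'/(\lambda_n p_n)$ solves the Riccati-type equation (\ref{Dlog}), is expanded at $z=\infty$ as $yN(y)=\sum\eps_i y^i$ with coefficients determined by the recurrence (\ref{eq:rec.rel}), and the method of majorants (Proposition~\ref{pr:conv}, with the nondegeneracy hypothesis verified in Proposition~\ref{prop:P_{0}}) gives a radius of convergence bounded below \emph{uniformly in $n$}. Hence every $L_n$ is analytic, and every $p_n$ is zero-free, in a fixed neighborhood of $\infty$ --- a hard pointwise bound that rules out even a single escaping root. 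If you want to complete your approach you would need an analogous per-root mechanism (e.g.\ an estimate at the extremal root extracted directly from the ODE), which is not supplied by the leading asymptotics $\lambda_{n,j}/n\to\alpha_j$ alone.
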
 
The  sketch  of the  proof of this fundamental result is as follows.  We convert the differential equation satisfied by the eigenpolynomials into a non-linear Riccati type equation for the logarithmic derivatives of the eigenpolynomials. This equation is then solved recursively and the formal solution is shown to be analytic in a neigh\-borhood of $\infty$. This shows that the zeros of the eigenpolynomials (coinciding with the  poles of the logarithmic derivatives) must lie in some compact subset of $\bC$.

Moreover, we conjecture that for each $j$ the above convergence result holds in fact for the whole corresponding sequence of eigenpolynomials.

\begin{Conj} In the notation of Theorem~\ref{th:main}, for every $j=1,\ldots,k$ the limit  
    $$\Psi_{j}(z)=\lim_{n\to\infty} \frac
{p_{n,j}'(z)}{\la_{n,j}p_{n,j}(z)}$$ exists and has all the properties stated in 
Theorem~\ref{th:main} almost everywhere in $\bCP$.
\label{th:ae}
\end{Conj}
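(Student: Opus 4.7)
The plan is to upgrade the subsequential convergence of Theorem~\ref{th:main} to convergence of the full sequence via a uniqueness argument for Cauchy transforms, and then extend the convergence from a neighborhood of $\infty$ to almost everywhere in $\bCP$ by standard potential-theoretic arguments. Introduce the normalized root-counting probability measure
$$
\mu_{n,j}=\frac{1}{n}\sum_{p_{n,j}(z_{r})=0}\de_{z_{r}},
$$
whose Cauchy transform $\mathcal{C}_{\mu_{n,j}}(z):=\int\frac{d\mu_{n,j}(t)}{z-t}$ satisfies the identity
$$
\frac{p_{n,j}'(z)}{\la_{n,j}p_{n,j}(z)}=\frac{n}{\la_{n,j}}\,\mathcal{C}_{\mu_{n,j}}(z).
$$
By Theorem~\ref{th:c} every $\mu_{n,j}$ is supported in a common disk $D\subset\bC$, so $\{\mu_{n,j}\}$ is tight and each subsequence admits a weakly convergent subsubsequence whose limit is supported in $D$.

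Fix any weak subsequential limit $\mu_j$. For $|z|$ larger than the radius of $D$ the function $t\mapsto 1/(z-t)$ is continuous and bounded on $D$, so $\mathcal{C}_{\mu_{n_{i},j}}(z)\to \mathcal{C}_{\mu_{j}}(z)$ pointwise along the chosen subsequence; combined with Proposition~\ref{lm:basic}(ii) and Theorem~\ref{th:main} this forces $\mathcal{C}_{\mu_{j}}(z)=\al_{j}\Psi_{j}(z)$ in a neighborhood of $\infty$, where $\Psi_{j}$ is the specific branch of $\Ga$ pinned down by the asymptotic $\Psi_{j}(z)\sim 1/(\al_{j}z)$ (the $k$ roots $\al_{j}$ of (\ref{eq:char}) being distinct by the nondegeneracy assumption). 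Since a compactly supported positive measure is uniquely determined by the Laurent expansion of its Cauchy transform at $\infty$, equivalently by its moments, the limit $\mu_{j}$ is independent of the chosen subsequence, and tightness therefore promotes this to weak convergence of the entire sequence $\mu_{n,j}\to \mu_{j}$.

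Finally, weak convergence together with uniform compact support yields $\mathcal{C}_{\mu_{n,j}}\to\mathcal{C}_{\mu_{j}}$ pointwise on $\bC\setminus D$ and in $L^{1}_{\mathrm{loc}}(\bC)$, hence almost everywhere on $\bCP$; multiplying by $n/\la_{n,j}\to 1/\al_{j}$ yields the desired a.e.\ convergence of $\frac{p_{n,j}'}{\la_{n,j}p_{n,j}}$ to $\Psi_{j}$, while analyticity near $\infty$ and the algebraic relation $\sum_{i=0}^{k}Q_{i}(z)\Psi_{j}^{i}(z)=0$ are inherited from Theorem~\ref{th:main}. The main technical obstacle is the step from pointwise convergence off $D$ to almost-everywhere convergence inside $D$: this requires uniform local $L^{p}$-estimates for $p<2$ on Cauchy transforms of positive measures of bounded mass supported in $D$, combined with a Vitali-type extraction. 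Everything else is a routine consequence of Theorems~\ref{th:main} and~\ref{th:c} together with standard weak-to-a.e.\ continuity properties of the Cauchy transform on compactly supported measures.
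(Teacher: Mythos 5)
This statement is an open \emph{conjecture} in the paper, not a theorem: the authors prove only the subsequential version (Theorem~\ref{th:main}) plus full-sequence convergence in a neighborhood of $\infty$, and they explicitly flag the uniqueness of $\mu_j$ as an open problem in the final section. So your proposal, if correct, would settle an open question --- and unfortunately it does not, because its pivotal step is false.

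The gap is your uniqueness argument: ``a compactly supported positive measure is uniquely determined by the Laurent expansion of its Cauchy transform at $\infty$, equivalently by its moments.'' This is false for measures in the plane. The normalized arc-length measure on the unit circle and the Dirac mass $\de_{0}$ both have Cauchy transform $1/z$ for $\vert z\vert>1$, hence identical Laurent expansions at $\infty$ and identical holomorphic moments $\int t^{m}\,d\mu(t)$; yet they are different measures. (A planar measure is recovered from $C_{\mu}$ only via $\mu=\frac{1}{\pi}\partial C_{\mu}/\partial\bar z$ on \emph{all} of $\bC$, and knowing $C_{\mu}$ outside the disk $D$ says nothing about how $\mu$ is distributed inside $D$; equivalently, one would need all mixed moments $\int t^{m}\bar t^{l}\,d\mu$, which weak convergence near $\infty$ does not control.) Consequently two different weakly convergent subsequences of $\{\mu_{n,j}\}$ could a priori have distinct limits $\mu_j,\mu_j'$ whose Cauchy transforms agree near $\infty$ and both satisfy the algebraic relation (\ref{eq:basic}) a.e.\ (with possibly different branch configurations in the sense of Proposition~\ref{Prop:PA}), and nothing in Theorems~\ref{th:main} or~\ref{th:c} rules this out. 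Your final paragraph identifies the $L^{1}_{\mathrm{loc}}$ passage inside $D$ as the main obstacle, but that part is actually routine (the paper uses it); the genuine obstruction is precisely the uniqueness of the subsequential limit, which your argument assumes rather than proves.
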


We emphasize the fact that  the {\em proof} of Theorem~\ref{th:main} actually shows that Conjecture~\ref{th:ae} is valid  (at least) in a neighborhood of infinity.   

In order to formulate our further results and reinterpret
Theorems~\ref{th:main}--\ref{th:c} and Conjecture~\ref{th:ae} we need some notation and 
several basic notions of potential theory.

\begin{not+}
 The function $L_{n,j}(z)=\frac
{p_{n,j}'(z)}{\la_{n,j}p_{n,j}(z)}$
is called the {\em normalized logarithmic derivative} of the
polynomial $p_{n,j}(z)$. The limit $\Psi_{j}(z)=\lim_{n\to\infty}
L_{n,j}(z)$ (if it exists) will be referred to as
the {\em asymptotic (normalized) logarithmic derivate} of the family
$\{p_{n,j}(z)\}$.

 The {\em root-counting measure} $\mu_{P}$ of a given
polynomial $P(z)$ of degree $m$
          is the  finite probability measure obtained by placing the mass
$\frac {1} {m}$ at
          every root of $P(z)$. (If some root is multiple we place at this point
          the mass equal to its multiplicity divided by $m$.) Given a
          sequence $\{P_{m}(z)\}$ of polynomials we call the limit
          $\mu=\lim_{m\to \infty} \mu_{P_{m}}$ (if it
           exists in the sense of the weak
           convergence of functionals)  {\em  the asymptotic root-counting
measure} of the
           sequence $\{P_{m}(z)\}$.

           If $\mu$ exists  and is compactly supported it is also a probability
           measure and its support $\supp \mu$ is the limit (in the
           set-theoretic sense) of the sequence $\{\Z_{P_{m}}\}$ of the zero
loci to $\{P_{m}(z)\}$.

 The Cauchy transform of a complex-valued
compactly supported finite
          measure $\rho$  is given
          by $$C_{\rho}(z)=\int_{\bC}\frac{d \rho(\xi)}{z-\xi}.$$
          Note that $C_{\rho}(z)$ is defined at each point $z$ for which
          the Newtonian potential
          $$U_{\vert \rho \vert}(z)=\int_{\bC}\frac{d\vert
          \rho\vert(\zeta)}{\vert \zeta - z\vert}$$
          is finite.
It is easy to see that $C_{\rho}(z)$ exists a.e. and that the
original measure $\rho$ can be restored
        from its Cauchy transform by the formula 
        $$\rho=\frac {1}{\pi}\frac {\partial C_{\rho}(z)}{\partial \bar
        z},$$
        where $\frac 1 \pi\frac {\partial
          C_{\rho}(z)}{\partial \bar z}$ is considered as a distribution, see
          e.g.~\cite[Ch.~2]{Ga}.

 Note also that the Cauchy transform
$C_{\mu_{P}}(z)$ of the
root-counting measure $\mu_{P}$
             of  a given degree $m$ polynomial $P(z)$  coincides with $\frac
{P'}{mP}$.
 \end{not+}

A reinterpretation of Theorem \ref{th:main} in the above terms is as
follows.

\begin{Prop} In the notation of Theorem \ref{th:main}, for each
$j=1,\ldots,k$ there exists a subsequence $\{n_{i,j}\}$, $i=1,2,\ldots$,  such that the asymptotic
root-counting measure $\mu_{j}$ of the 
family $\{p_{n_{i,j}}(z)\}_i$ exists and has compact support with 
vanishing Lebesgue area.
The Cauchy transform of $\mu_{j}$ coincides with $\al_{j}\Psi_{j}$ almost everywhere in $\bC$.
\label{pr:main}
\end{Prop}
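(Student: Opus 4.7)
The plan is to combine the uniform root localization of Theorem~\ref{th:c} with the subsequential pointwise convergence of the normalized logarithmic derivatives in Theorem~\ref{th:main} and with the classical identity $C_{\mu_{P}}=P'/(\deg P\cdot P)$ noted in the paper.

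First, fix $j$. Theorem~\ref{th:c} supplies a disk $D\subset\bC$ containing the zero locus of every $p_{n,j}$, so each $\mu_{p_{n,j}}$ is a Borel probability measure supported in $D$ and the family $\{\mu_{p_{n,j}}\}_{n}$ is weak-$*$ precompact. Starting from the subsequence provided by Theorem~\ref{th:main} along which $L_{n,j}\to\Psi_{j}$ almost everywhere, I would extract a further subsequence $\{n_{i,j}\}_{i}$ along which $\mu_{p_{n_{i,j}}}$ converges weakly to a limit $\mu_{j}$; automatically $\mu_{j}$ is a probability measure supported in $D$, hence compactly supported.

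Next I identify the Cauchy transform of $\mu_{j}$. Since $\deg p_{n,j}=n$,
$$C_{\mu_{p_{n,j}}}(z)=\frac{p'_{n,j}(z)}{n\,p_{n,j}(z)}=\frac{\la_{n,j}}{n}\,L_{n,j}(z).$$
Proposition~\ref{lm:basic} gives $\la_{n_{i,j}}/n_{i,j}\to\al_{j}$ while Theorem~\ref{th:main} gives $L_{n_{i,j},j}\to\Psi_{j}$ almost everywhere, so the left-hand side converges pointwise a.e.\ to $\al_{j}\Psi_{j}$. Independently, weak convergence of uniformly compactly supported measures forces convergence of the logarithmic potentials in $L^{1}_{loc}$, hence convergence of the Cauchy transforms as distributions and, after extracting one more subsequence, almost everywhere to $C_{\mu_{j}}$. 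Matching the two limits yields $C_{\mu_{j}}=\al_{j}\Psi_{j}$ almost everywhere in $\bC$.

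For the vanishing Lebesgue area of $\supp\mu_{j}$, I would use the distributional identity $\mu_{j}=\pi^{-1}\partial_{\bar z}C_{\mu_{j}}$. By Theorem~\ref{th:main}, $\Psi_{j}$ solves the algebraic equation $\sum_{i=0}^{k}Q_{i}(z)\Psi_{j}^{i}(z)=0$ pointwise a.e., so outside the finite set of roots of the discriminant of this polynomial in $w$ the value $\Psi_{j}(z)$ picks out one of finitely many holomorphic branches of $\Ga$. On the open complement of a closed set of Lebesgue area zero---the branch locus together with the finitely many arcs across which $\Psi_{j}$ switches between local analytic branches---the Cauchy transform $C_{\mu_{j}}=\al_{j}\Psi_{j}$ coincides a.e.\ with a holomorphic function, and so $\partial_{\bar z}C_{\mu_{j}}=0$ there. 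Hence $\mu_{j}$ vanishes on this open set and its support has vanishing Lebesgue area. The main technical point is precisely the passage from weak convergence of the root-counting measures to pointwise (a.e.) convergence of their Cauchy transforms, needed to make the identification $C_{\mu_{j}}=\al_{j}\Psi_{j}$ a pointwise a.e.\ statement rather than merely a distributional one; this is resolved by the standard $L^{1}_{loc}$-continuity of logarithmic potentials under weak convergence of uniformly compactly supported measures combined with a diagonal extraction argument.
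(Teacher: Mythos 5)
Your extraction of $\mu_j$ and the identification of its Cauchy transform are correct and follow the paper's own route: Helly's theorem applied to measures supported in the disk from Theorem~\ref{th:c}, the identity $C_{\mu_{p_{n,j}}}=p_{n,j}'/(n\,p_{n,j})=(\la_{n,j}/n)L_{n,j}$, the asymptotics $\la_{n,j}/n\to\al_j$ from Proposition~\ref{lm:basic}, and the $L^1_{loc}$-continuity of the Cauchy transform under weak convergence (the paper isolates exactly this as the lemma closing \S\ref{sec:5}) followed by one further subsequence to upgrade to a.e.\ convergence. That part needs no changes.

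The gap is in your argument for the vanishing Lebesgue area of $\supp\mu_j$. You assert that the set across which $\Psi_j$ switches between the local analytic branches of $\Ga$ consists of ``finitely many arcs,'' and deduce that off a closed null set $C_{\mu_j}$ agrees locally a.e.\ with a single holomorphic branch. Nothing established so far supports this. What Theorem~\ref{th:main} gives is only that for a.e.\ $z$ the value $\al_j\Psi_j(z)$ lies in the finite set of branch values; equivalently, the sets $M_i=\{z: C_{\mu_j}(z)=A_i(z)\}$ form a \emph{measurable} partition of the plane up to a null set, and a priori nothing more (this is precisely the content of Proposition~\ref{Prop:PA}, Definition~\ref{def-pa}). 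A measurable partition of this kind can have interfaces of positive area, and for a general distribution of the form $\partial_{\bar z}\bigl(\sum_i A_i\chi_i\bigr)$ the support need not be small: already $\partial_{\bar z}\chi_M$ for a measurable $M$ is supported on the essential boundary of $M$, which may well have positive Lebesgue measure. The reason this pathology is excluded here is the positivity $\partial_{\bar z}C_{\mu_j}=\pi\mu_j\ge 0$, which your step never invokes; exploiting that positivity is exactly the point of the results of \cite{BB} that the paper imports as Theorem~\ref{locsub} and uses to prove Theorem~\ref{th:rikard} (locally, away from the real-analytic set $\Gamma_2$, the support lies on level curves of $\Re\int(A_i-A_l)$). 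In short, your step assumes the conclusion of Theorem~\ref{th:rikard} rather than deriving it. To close the gap you should either invoke Theorem~\ref{th:rikard} together with the fact that a proper real-analytic subset $\Gamma_2\subset\bC$ has zero area (with Theorem~\ref{th:generic} handling the typical case), or reproduce the subharmonicity argument of \cite{BB}; there is no soft way to get from ``$\Psi_j$ takes values in finitely many analytic branches a.e.'' to ``$\supp\mu_j$ is null'' without using $\mu_j\ge 0$.
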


As an illustration we present below some numerical results
showing a rather complicated behavior of the zero loci $\Z_{p_{n,j}}$
for $j=1,\ldots,k$. Recall that $\supp \mu_{j}=\lim_{n\to \infty}\Z_{p_{n,j}}$ and 
note that there are slight differences between the scalings used in the four pictures shown in Fig.~\ref{fig1}.

\begin{figure}[!htb]
\centerline{\hbox{\epsfysize=5cm\epsfbox{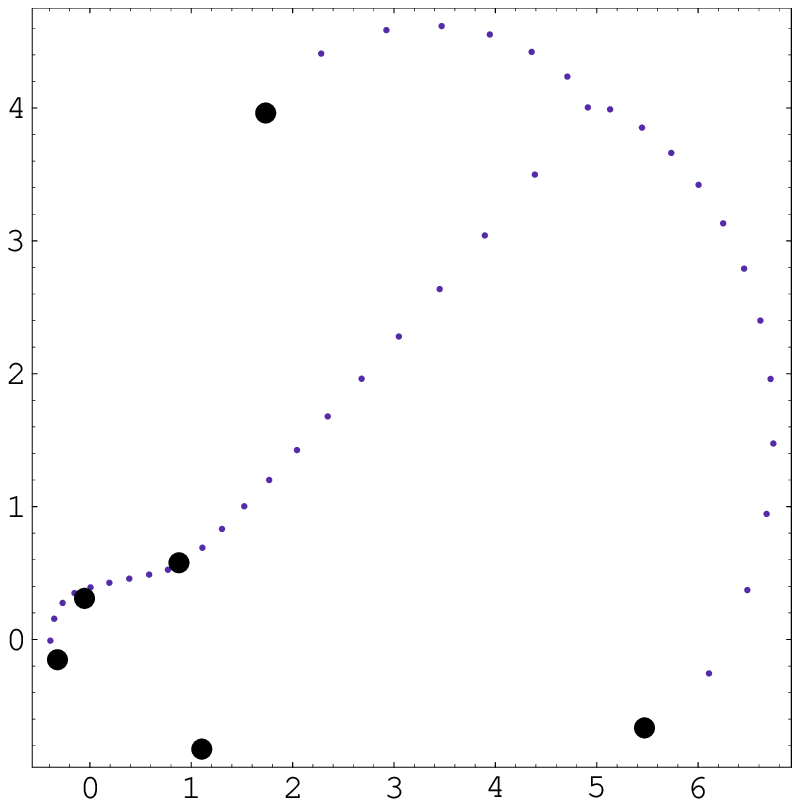}}
\hskip1cm\hbox{\epsfysize=5cm\epsfbox{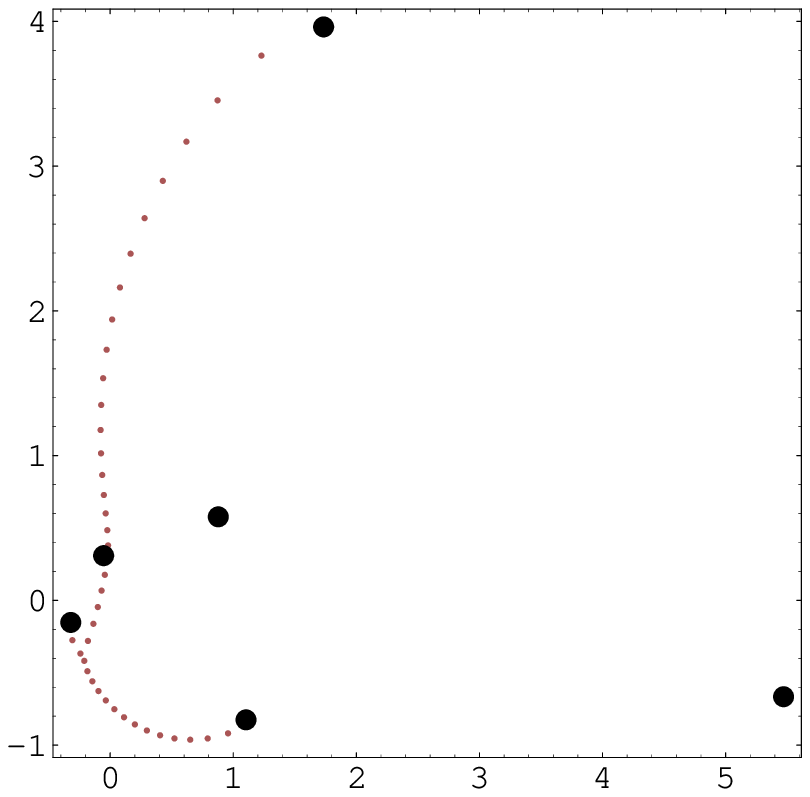}}}
\centerline{\hbox{\epsfysize=5cm\epsfbox{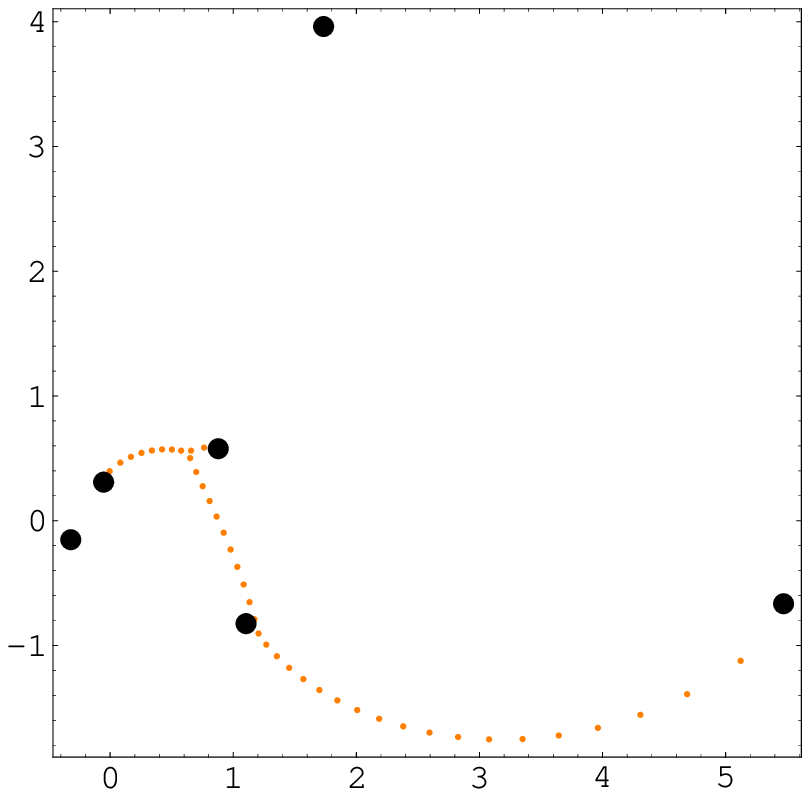}}
\hskip1cm\hbox{\epsfysize=5cm\epsfbox{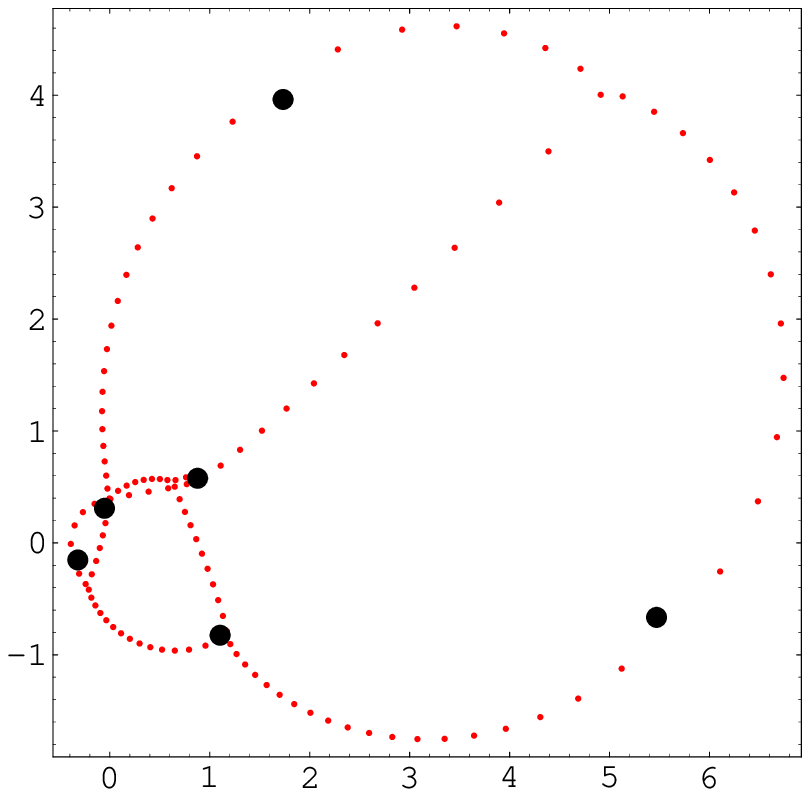}}}
\caption{Three root-counting measures for a third order homogenized spectral pencil. }
\label{fig1}
\end{figure}

\medskip \noindent
{\bf Explanations to Fig.~\ref{fig1}.} The two pictures in the upper row
and the left picture in the bottom row show the roots of three
eigenpolynomials of degree $55$ for the (ad hoc chosen) homogenized spectral
pencil
$$T_{\la}=\left(z^3-(5+2I)z^2+(4+2I)z\right)\frac{d^3}{dz^3}+\la(z^2+Iz+2)\frac{d^2}{dz^2}
+\la^2\frac {1}{5}(z-2+I)\frac{d}{dz}+\la^3$$
consisting of $ES_3$-operators.
The remaining
picture shows the union of the roots of all three polynomials. The six fat
points on all pictures are the branching points of the associated algebraic
curve $\Ga$  given by
$$ \left(z^3-(5+2I)z^2+(4+2I)z\right)w^3+(z^2+Iz+2)w^2+
\frac {1}{5}(z-2+I)w+1=0$$ and considered as the branched
covering over the $z$-plane. Numerical comparisons of the eigenfunctions of 
different degrees show that the above three
root distributions already give a very good approximation of the 
three corresponding limiting 
probability measures
$\mu_{1},\mu_{2},\mu_{3}$ whose Cauchy transforms generate
(after appropriate scalings, see
Theorem~\ref{th:main}) the
three branches of $\Ga$ near $\infty$. Note that all three supports
end only at the six branching points of $\Ga$.  

\medskip
As far as the measures $\mu_{j}$ are concerned, in this paper we establish  
just some of their most important properties. 
To prove these we use two facts about $C_{\mu_j}(z)$. First,  that $\alpha_j^{-1}C_{\mu_j}(z)$ satisfies the algebraic equation  (\ref{eq:basic}) and hence can be locally written as a finite sum $\sum_{i=1} ^r\chi_i\alpha_i^{-1}\Psi_i$, where the $\chi_i$'s are characteristic functions of certain sets. Second, that
  $$\frac {\partial C_{\mu_j}(z)}{\partial \bar
        z}\geq 0$$
        in the sense of distributions.  Using these facts we can develop a natural algebraic geometric setting and build on complex analytic techniques based on the main results of \cite{BB} in order to prove the next two theorems.

\begin{Theorem} For each pencil  $T_\la$ of general type there exists a real-analytic subset $\Gamma_2$ of ${\bC}$ such that  any limiting measure $\mu_j$ has properties (A)--(C) below in any sufficiently small neighborhood $\Omega(z_{0})$ of any point $z_0\in {\bC}\setminus \Gamma_2$. In what follows $\Gamma$ denotes  the curve associated with the pencil $T_\la$, $\alpha_j\in\bf C$ is as in Proposition~\ref{lm:basic}, and for any branch $\gamma_i$ of $\Ga$ we let $A_i:=\alpha_j\ga_i$ (clearly, $A_i$ depends on $j$).
\begin{itemize}
\item[(A)] 
The support $S$ of the measure $\mu_{j}$ restricted to $\Omega(z_{0})$  is a finite union of smooth curves $S_r,\ r\in J$.
\item[(B)] For each $S_r$ and any $\tilde z\in
S_{r}$ lying in $\Omega(z_{0})$ one can always choose two branches $\ga_1(z)$ and $\ga_{2}(z)$
of $\Ga$ such that the tangent line $l(\tilde z)$ to $S_{r}$  is orthogonal to $\overline{A_{1}(\tilde z)}-\overline{A_2(\tilde z)}$.
\item[(C)] The density
of the measure $\mu_{j}$ at $\tilde z$ equals
$\frac{\vert  {A_{1}(\tilde z})-A_{2}({\tilde z}) \vert ds}{2\pi}$,
where $ds$ is the length element along the curve $S_{r}$.
\end{itemize}
\label{th:rikard}
    \end{Theorem}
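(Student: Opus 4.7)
The plan is to exploit the two properties the authors emphasize just before the statement: that $\alpha_j^{-1}C_{\mu_j}$ satisfies the algebraic equation~(\ref{eq:basic}) almost everywhere, and that $\partial C_{\mu_j}/\partial \bar z\ge 0$ in the sense of distributions. Observe that $\mu_j$ is automatically a positive probability measure, as the weak limit of root-counting measures of polynomials whose roots are uniformly bounded by Theorem~\ref{th:c}. The first property forces $C_{\mu_j}$, on any open set where it is holomorphic and away from the discriminant of $\Ga$, to coincide with one of the branches $A_i$ of $\Ga$; the second then severely restricts the geometry of how the choice of branch can change.

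I would take $\Gamma_2$ to be the projection to $\bC$ of the branch locus of $\Ga\to\bCP$ together with the zero set of $Q_k$, augmented if necessary by the real-analytic strata on which modulus-type coincidences $|A_i-A_l|=|A_m-A_n|$ occur; on $\bC\setminus\Gamma_2$ the $A_i$'s are distinct holomorphic functions. Fix $z_0\notin\Gamma_2$ and a small $\Omega(z_0)$. Since $\mu_j$ has vanishing Lebesgue area, the complement of $\supp\mu_j$ in $\Omega(z_0)$ is open and dense, and on each of its connected components $C_{\mu_j}$ is holomorphic and, by the algebraic equation, equals a fixed $A_i$. Hence $\supp\mu_j\cap\Omega(z_0)$ is exactly the interface between regions of different branch selection. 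The content of (A) --- and the main obstacle --- is that this interface is locally a finite union of smooth real-analytic arcs $S_r$. This is precisely where the complex-analytic machinery of \cite{BB} is invoked: near a non-degenerate interface point two branches $A_1,A_2$ dominate, and $S_r$ is locally contained in a level set of the real part of a local primitive of $A_1-A_2$; since the gradient of that real part equals $\overline{A_1-A_2}$ and does not vanish outside $\Gamma_2$, this level set is a smooth real-analytic curve, and a dimension/area argument coming from the algebraic equation bounds the number of arcs.

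With (A) in hand, properties (B) and (C) follow from a single distributional computation. Across an arc $S_r$ on which $C_{\mu_j}$ jumps from $A_1$ to $A_2$, the standard formula for $\partial/\partial\bar z$ of a piecewise holomorphic function gives
$$\frac{\partial C_{\mu_j}}{\partial \bar z}\bigg|_{S_r}=\tfrac{1}{2}\bigl(A_2(\tilde z)-A_1(\tilde z)\bigr)\,\nu(\tilde z)\,ds,$$
where $\nu\in\bC$, $|\nu|=1$, is the unit normal pointing from the $A_1$-side to the $A_2$-side. By the inversion formula $\mu_j=\tfrac{1}{\pi}\partial C_{\mu_j}/\partial\bar z$, the restriction of $\mu_j$ to $S_r$ has complex density $\tfrac{1}{2\pi}(A_2-A_1)\nu$ with respect to $ds$. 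Positivity of $\mu_j$ forces $(A_2-A_1)\nu\in\mathbb{R}_{\ge 0}$, which identifies $\nu$ as a positive real multiple of $\overline{A_2-A_1}/|A_2-A_1|$; equivalently, the tangent to $S_r$ is orthogonal to $\overline{A_1}-\overline{A_2}$, proving (B), and $|(A_2-A_1)\nu|=|A_1-A_2|$ yields the density $|A_1-A_2|/(2\pi)$, which is (C).
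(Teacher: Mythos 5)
Your overall strategy is the same as the paper's: establish that $C_{\mu_j}$ is a piecewise analytic function built from branches of $\Ga$ with $\partial C_{\mu_j}/\partial\bar z\ge 0$ (this is Proposition~\ref{Prop:PA}), delegate the hard local-structure statement (A) to the main theorem of \cite{BB}, and then read off (B) and (C). Your derivation of (B) and (C) via the distributional jump formula for a piecewise holomorphic function across a smooth arc is correct and is in fact a more self-contained route than the paper's, which obtains (B) from the level-set description $S_r\subset\{\Re\int_p^z(A_1-A_2)\,dw=\mathrm{const}\}$ and cites \cite[Theorem~2]{BB} for (C); the two are equivalent, since the gradient of $\Re\int_p^z(A_1-A_2)\,dw$ is $\overline{A_1-A_2}$.

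The one point where your argument as written would not go through is the definition of $\Gamma_2$. The exceptional set is dictated by the hypotheses of the theorem you are importing from \cite{BB}, and those hypotheses have nothing to do with modulus coincidences $|A_i-A_l|=|A_m-A_n|$. Besides the branch locus of $\Ga$ (where two $A_i$'s collide), one must remove the points $p$ lying on at least \emph{two} distinct loci
$\Gamma_{i,j,k}=\{z:\ A_i(z)-A_k(z)\in\bR\,(A_j(z)-A_k(z))\}$,
i.e.\ points where two different triples of branches are simultaneously collinear over $\bR$. This condition is what rules out the degenerate configurations (several interfaces meeting at $p$ in an uncontrolled way) that the machinery of \cite{BB} cannot handle; your jump-formula computation also implicitly assumes that exactly two regions, carrying two distinct branches, abut the arc at $\tilde z$, and it is precisely this collinearity condition that guarantees it. With $\Gamma_2$ defined as the union of the branch locus and the pairwise intersections of the $\Gamma_{i,j,k}$'s (a real-analytic set), the rest of your argument matches the paper's proof.
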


    In fact for most pencils we can do better:
    
    \begin{Theorem} For a typical general type pencil $T_\la$ the set $\Gamma_2$ in the preceding theorem is finite.
    \label{th:generic}
    \end{Theorem}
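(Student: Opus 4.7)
The plan is to identify $\Gamma_{2}$ explicitly from the proof of Theorem~\ref{th:rikard} and then cut out, by finitely many real-analytic conditions on the coefficients $a_{i,j}$ of $T_{\la}$, a proper nongeneric locus in the space of general type pencils outside of which $\Gamma_{2}$ is finite. Examining the proof of Theorem~\ref{th:rikard}, the set $\Gamma_{2}$ decomposes as the union of: (a) the branch locus $B\subset\bC$ of the projection $\Ga\to\bCP$; (b) the singular set of the support $S$ of $\mu_{j}$, i.e.\ its self-intersections and multiple points; and (c) the locus $T$ where more than two branches $A_{i}$ of $\Ga$ are simultaneously involved in the local modulus comparison governing properties (B)--(C). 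Part (a) is always finite: $B$ is the zero set in $z$ of the discriminant of the defining polynomial (\ref{eq:basic}) in $w$.

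Away from $B$ each branch $A_{i}(z)$ is a single-valued holomorphic function, and $S$ sits locally on the real-analytic curves $\textrm{Re}\int_{z_{0}}^{z}(A_{i}(\zeta)-A_{j}(\zeta))\,d\zeta=\textrm{const}$, which lift to level sets of real parts of abelian integrals on the compact Riemann surface $\widetilde{\Ga}$ normalizing $\Ga$. Standard Stokes/Boutroux-type arguments show that the singularities of such level sets are exactly the critical points of the corresponding integrands on $\widetilde{\Ga}$, forming a finite subset whose projection to $\bC$ is finite. The set (b) is thus always finite, while (c) becomes finite as soon as no two distinct critical values of the integrands $\int(A_{i}-A_{j})\,d\zeta$ coincide and the various real-analytic arcs defining $S$ meet only transversally; both are open dense conditions on the pencil.

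The remaining danger is that the system of equations defining (c) acquires a positive-dimensional component, i.e.\ that three or more distinct level equations $\textrm{Re}\int(A_{i}-A_{j})\,d\zeta=c_{i,j}$ hold along a real curve of positive length. By analytic dependence of the branches $A_{i}$ on the coefficients $a_{i,j}$, such a coincidence amounts to a nontrivial linear dependence modulo exact forms among the one-forms $(A_{i}-A_{j})\,d\zeta$, and thus carves out a proper real-analytic subvariety $\mathcal{U}_{\textrm{bad}}$ of the parameter space $\mathcal{U}$ of general type pencils. The principal obstacle will be to show that $\mathcal{U}_{\textrm{bad}}$ has empty interior and, simultaneously, to rule out accumulation of singular arcs of $S$ at points of $B$ or at $\infty$. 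For the latter, Theorem~\ref{th:c} confines all eigenpolynomial roots to a fixed disk, so no accumulation of $\supp \mu_{j}$ occurs near $\infty$; a local model analysis of the level sets near each branch point handles accumulation at $B$. Combining these reductions, finiteness of $\Gamma_{2}$ becomes an open (real-analytic) condition on the coefficients $a_{i,j}$, hence holds for a typical general type pencil in the sense required.
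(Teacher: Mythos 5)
There is a genuine gap, and it comes in two parts. First, you have misidentified the set $\Gamma_2$. In the paper $\Gamma_2$ consists of the (finitely many) branch points of $\Ga$ together with the points lying on at least two distinct sets $\Gamma_{i,j,k}=\{z\,:\,A_i(z)-A_k(z)\in\bR\,(A_j(z)-A_k(z))\}$, i.e.\ it is defined by a \emph{pointwise} real-proportionality condition on the values of the branches, $\Im\bigl[(A_i-A_k)\overline{(A_j-A_k)}\bigr]=0$. It is not the singular set of $\supp\mu_j$, nor is it governed by level sets of $\Re\int(A_i-A_j)\,d\zeta$ or by critical points of abelian integrals on the normalization of $\Ga$; those objects are relevant to the description of $\supp\mu_j$ in Theorem~\ref{th:rikard}, but they do not enter the definition of $\Gamma_2$. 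Consequently your items (b) and (c), and the reduction of the problem to ``linear dependence modulo exact forms among the one-forms $(A_i-A_j)\,d\zeta$,'' address the wrong object: what must be shown is that any two \emph{distinct} real-algebraic curves $\Gamma_{i,j,k}$ and $\Gamma_{r,s,t}$ have finite intersection, i.e.\ share no common one-dimensional component, for a typical pencil.

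Second, even granting the correct formulation, the decisive step is missing. Your argument concludes that the bad locus $\mathcal U_{\mathrm{bad}}$ is a ``proper real-analytic subvariety,'' but properness is exactly what has to be proved: a closed condition cutting out the bad pencils could a priori be satisfied identically. The paper handles this by (i) an algebro-geometric argument (the fibre-product construction, the annihilator ideal $J_2$, and Lemma~\ref{lem-16}) showing that finiteness of $\Gamma_2$ is an \emph{open} condition on the coefficients, and (ii) the explicit Example~\ref{ex:3} (the reducible curve $\prod_i((b_i-z)y-1)=0$, for which each $\Gamma_{i,j,k}$ is computed to be a circle, with generically distinct centers) showing the good locus is \emph{non-empty}. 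You acknowledge that showing $\mathcal U_{\mathrm{bad}}$ has empty interior is ``the principal obstacle'' but then simply assert the conclusion; without producing at least one pencil with finite $\Gamma_2$ (or some substitute argument), the claim that the condition holds ``typically'' is circular. The appeal to Theorem~\ref{th:c} and to accumulation of $\supp\mu_j$ at $\infty$ or at branch points is likewise beside the point, since $\Gamma_2$ is determined by the curve $\Ga$ alone and not by the measures.
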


\begin{rema+}
In the case of the usual spectral problem strong results  in this direction were obtained in \cite{BR}. 
\end{rema+}

           We note that Theorem \ref{th:ae} also leads to a partial progress on the
           following intriguing question in potential theory. Let $Alg(i,j)$
           denote the linear space of all polynomials in the variables
           $(y,z)$ of bidegrees at most $(i,j)$, i.e., each $P(y,z)\in
           Alg(i,j)$ has the form $\sum_{l=0}^iP_{l}(z)y^l$, where $\deg
           P_{l}(z)\le j,\;l=1,\ldots,i$. Abusing the notation we identify each
           $P(y,z)\not \equiv 0$ with the algebraic function in the variable
           $z$ defined by the equation $P(y,z)=0$.

           \begin{Prob}\label{pb2} Describe the subset $PAlg(i,j)\subset Alg(i,j)$
           consisting of all algebraic functions which have a branch near
           $\infty$ coinciding with the Cauchy transform of some
           probability measure compactly supported in $\bC$.
		\end{Prob}

	We refer to such algebraic functions as {\em positive Cauchy
transforms}.
	The only case when a complete answer to the above problem seems to
	be obvious is for bidegrees $(1,j)$, i.e., the case of rational
	functions. Namely, a rational function $r(z)=\frac {p(z)}{q(z)}$ is
	a positive Cauchy transform if and only if $r(z)$ is of the form 
$$\sum_{l=1}^j\frac
	{c_{l}}{z-z_{l}}\text{ with }c_{l}>0,1\le l\le j,\, z_{k}\neq z_{l}\text{ for }k\neq l,\,\sum_{l=1}^jc_{l}=1.$$


Let us finally give yet another interpretation of Theorem~\ref{th:main} 
and
           Proposition~\ref{pr:main}.

              \begin{Cor} Each  branch near infinity of an 
algebraic function satisfying (\ref{eq:basic}) is   a
positive Cauchy transform multiplied by an appropriate constant. 
            \label{cor:CT}
	 \end{Cor}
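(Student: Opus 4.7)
The plan is to recognize Corollary~\ref{cor:CT} as essentially a reformulation of Theorem~\ref{th:main} combined with Proposition~\ref{pr:main}. Given an algebraic equation $\sum_{i=0}^{k}Q_{i}(z)w^{i}=0$ of the form (\ref{eq:basic}) satisfying the general type nondegeneracy requirements, I would form the associated spectral pencil $T_{\la}$ via (\ref{eq:diff}). Proposition~\ref{lm:basic} then supplies the $k$ families of eigenpolynomials $\{p_{n_{i,j}}\}$, and Theorem~\ref{th:main} produces $k$ analytic functions $\Psi_{1},\dots,\Psi_{k}$ in a common neighborhood of $\infty$, each satisfying (\ref{eq:basic}). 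Because the $\Psi_{j}$ are $k$ independent sections of $\Ga$ and because $\Ga$ has exactly $k$ sheets over a punctured neighborhood of $\infty$ (as $\deg Q_{k}=k$), these $\Psi_{j}$ exhaust all branches of the algebraic function near infinity.

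Next I would invoke Proposition~\ref{pr:main}, which asserts that $\al_{j}\Psi_{j}$ coincides almost everywhere with the Cauchy transform $C_{\mu_{j}}$ of a compactly supported probability measure $\mu_{j}$. Since $\Psi_{j}$ is holomorphic in a neighborhood of $\infty$ (by Theorem~\ref{th:main}) and $C_{\mu_{j}}$ is holomorphic outside the support of $\mu_{j}$ (which is confined to a fixed disk by Theorem~\ref{th:c}), the equality $\Psi_{j}=\al_{j}^{-1}C_{\mu_{j}}$ extends by analytic continuation to hold identically on a full neighborhood of $\infty$. Therefore each branch of $\Ga$ at infinity has the form $\al_{j}^{-1}C_{\mu_{j}}$, i.e., a positive Cauchy transform scaled by the constant $\al_{j}^{-1}$.

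To accommodate coefficient configurations that fail the general type assumption (for instance when the characteristic equation (\ref{eq:char}) has multiple roots or $a_{k,k}=0$), I would use a perturbation argument: deform the $Q_{i}$ along a short real-analytic path so that the perturbed pencils become of general type, apply the preceding conclusion for each perturbation to obtain measures $\mu_{j}^{(\eps)}$, and then pass to the limit as $\eps\to 0$. The uniform localization provided by Theorem~\ref{th:c} keeps all supports inside a fixed disk, so Helly's selection theorem produces limiting measures along suitable subsequences.

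The main obstacle I expect lies in this last step: one must verify that no mass of $\mu_{j}^{(\eps)}$ escapes to infinity as $\eps\to 0$ (so the weak limit remains a probability measure rather than a sub-probability) and that the resulting Cauchy transform reproduces the prescribed branch of the unperturbed equation rather than a different branch or some linear combination. The a priori bound of Theorem~\ref{th:c} should rule out mass escape, while the Laurent expansion $\Psi_{j}(z)=\al_{j}^{-1}z^{-1}+O(z^{-2})$ near infinity---which is stable under the limit---pins down the correspondence between a given branch and its representing measure.
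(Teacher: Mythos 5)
Your first two paragraphs are exactly the paper's argument: the corollary is stated there as an immediate reinterpretation of Theorem~\ref{th:main} together with Proposition~\ref{pr:main} (the $\Psi_{j}$ exhaust the $k$ branches of $\Ga$ near $\infty$, and $\al_{j}\Psi_{j}=C_{\mu_{j}}$ for a compactly supported probability measure $\mu_{j}$), and no further proof is given or needed for the general type case.

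Your third paragraph, however, attempts more than the paper claims, and the perturbation argument there does not work. The corollary is offered only as an interpretation of Theorem~\ref{th:main}, hence implicitly under the general type hypotheses; for degenerate configurations the key uniformity you need fails. The disk of Theorem~\ref{th:c} depends on the pencil (it comes from the radius of convergence estimate of Proposition~\ref{pr:conv}, whose constants blow up as the nondegeneracy conditions of Proposition~\ref{prop:P_{0}} degenerate, e.g.\ when two roots of (\ref{eq:char}) collide or align through the origin), so there is no fixed compact set containing $\supp\mu_{j}^{(\eps)}$ for all $\eps$. Indeed, the paper itself records in \S\ref{sec:problems} that for pencils of nongeneral type the asymptotic root-counting measures typically have \emph{noncompact} support, so mass escape to infinity is a real phenomenon and Helly's theorem would only yield a sub-probability limit. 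You should simply drop that paragraph and state the corollary for general type curves, as the paper does.
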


          \medskip
        
     The structure of the paper is as follows.  In \S 2 we study the
          asymptotics of the eigenvalues to (\ref{eq:diff}) and some simple
          properties of the corresponding eigenfunctions as well as the
          defining algebraic curve (\ref{eq:basic}). In \S 3 we solve the
          spectral problem defined by the operator (\ref{eq:diff}) in formal
          power series near $\infty$ using the variable $y:=z^{-1}$. In \S 4
          we show that the power series solution obtained in \S 3 converges
          in some neighborhood of $\infty$. Based on these results we prove  Theorem \ref{th:c}
	  as well as 
	  Theorem~\ref{th:main} and its corollaries in \S\ref{sec:5}. In \S 6
          we prove Theorem~\ref{th:rikard} and complete the proof of 
          Theorem~\ref{th:generic}.   Finally, in \S 7 we propose a number of open problems and 
          conjectures on the asymptotic
          behavior of polynomials functions for linear ordinary differential
          operators and place these in a wider context that encompasses both 
old and new literature on this and related topics.

\medskip
          {\it Acknowledgements.} We wish to thank 
          Hans~Rullg\aa rd  for stimulating
          discussions on these subjects. It  is difficult to underestimate 
the role of
          the excellent paper \cite {BR}
          in the current project. We would also like to thank the anonymous referee 
for his valuable comments.

\section{Basic facts on asymptotics of eigenvalues and polynomial
eigenfunctions}
\label{sec:asymp}

In this section we prove Proposition \ref{lm:basic}, that is, we describe the polynomial solutions of  an $ES$ spectral pencil and we also prove the easy part of
Theorem \ref{th:main} saying that if the limit functions $\Psi_{j}(z)$, $1\le j\le k$, exist locally and have
derivatives of sufficiently high order then they must
satisfy equation (\ref{eq:basic}).

          Denote by  $\mathcal D_{n}\subset ES_{k}$ the subset of
              all $ES$-operators $T$ of order  at most $k$ such that the
equation $Ty=0$ has
             a polynomial solution of degree exactly $n$.

             \begin{Lemma} In the notation of Theorem \ref{th:main} the closure
             $\overline {\mathcal D_{n}}$ of the discriminant  $\mathcal
D_{n}$ is a
             hyperplane in $ES_{k}$
             given by the equation
             \begin{equation}
             \sum_{i=0}^k n(n-1)\ldots(n-i+1) a_{i,i}=0.\label{eq:discr}
             \end{equation}
             \label{lm:discr}
             \end{Lemma}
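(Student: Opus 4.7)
The plan is to use the fact that every $T\in ES_{k}$ maps $\mathcal{P}_{n}$ into itself (since each $a_{i}(z)$ has degree at most $i$), so that $\mathcal{D}_{n}$ can be rephrased as the set of $T\in ES_{k}$ for which $\ker\bigl(T|_{\mathcal{P}_{n}}\bigr)$ contains a polynomial of degree exactly $n$. The first computation is to determine the matrix of $T|_{\mathcal{P}_{n}}$ in the monomial basis $1,z,\ldots,z^{n}$. Since $\frac{d^{i}}{dz^{i}}z^{m}=m(m-1)\cdots(m-i+1)z^{m-i}$ and $a_{i}(z)=a_{i,i}z^{i}+(\text{lower order})$, one gets
$$T(z^{m})=c_{m}(T)\,z^{m}+(\text{terms of degree }<m),\qquad c_{m}(T):=\sum_{i=0}^{k}m(m-1)\cdots(m-i+1)\,a_{i,i}.$$
Hence $T|_{\mathcal{P}_{n}}$ is upper triangular in that basis with diagonal entries $c_{0}(T),c_{1}(T),\ldots,c_{n}(T)$, and (\ref{eq:discr}) asserts precisely that $c_{n}(T)=0$.

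For the inclusion $\mathcal{D}_{n}\subset H_{n}:=\{c_{n}=0\}$, suppose $Tp=0$ with $p(z)=d_{n}z^{n}+\cdots+d_{0}$ and $d_{n}\neq 0$. Reading off the $z^{n}$-coefficient of $Tp$ gives $c_{n}(T)\,d_{n}=0$, so $c_{n}(T)=0$. Since $H_{n}$ is a (closed) hyperplane, this yields $\overline{\mathcal{D}_{n}}\subset H_{n}$.

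For the reverse inclusion I would show $\mathcal{D}_{n}$ is dense in $H_{n}$. Let $U\subset H_{n}$ be the Zariski-open locus where $c_{0}(T),\ldots,c_{n-1}(T)$ are all nonzero. For any $T\in U$, the matrix $[T]|_{\mathcal{P}_{n}}$ is upper triangular with exactly one vanishing diagonal entry (at position $(n,n)$), so its kernel is one-dimensional. Setting $d_{n}=1$ and back-substituting the homogeneous system row by row for $m=n-1,n-2,\ldots,0$, at each step one divides by the nonzero quantity $c_{m}(T)$ and obtains $d_{m}$ uniquely; the resulting polynomial lies in $\ker T$ and has degree exactly $n$, so $T\in\mathcal{D}_{n}$. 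Hence $U\subset\mathcal{D}_{n}\subset H_{n}$.

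The remaining check is that $U$ is nonempty; this is where the only real verification occurs, and I expect it to be the one point deserving care. It suffices to show that each linear form $c_{m}$ with $m<n$ is not a scalar multiple of $c_{n}$, because then $\{c_{m}=0\}\cap H_{n}$ is a proper (closed) subvariety of the irreducible hyperplane $H_{n}$, and the finite union over $m<n$ remains proper, so $U$ is dense. Viewing the $c_{m}$ as linear forms in the independent variables $a_{0,0},\ldots,a_{k,k}$ (the coefficients $a_{i,j}$ with $j<i$ play no role), the coefficients of $a_{1,1}$ in $c_{m}$ and $c_{n}$ are $m$ and $n$ respectively, so for any $k\ge 1$ and any $m\neq n$ the forms $c_{m}$ and $c_{n}$ are linearly independent, finishing the argument. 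Combined with the first paragraph, this gives $\overline{\mathcal{D}_{n}}=H_{n}$ as asserted.
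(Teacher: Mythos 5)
Your proof is correct and follows essentially the same route as the paper's: compute the matrix of $T|_{\mathcal P_n}$ in the monomial basis, observe it is upper triangular with $m$-th diagonal entry $c_m(T)=\sum_{i=0}^k m(m-1)\cdots(m-i+1)a_{i,i}$, and identify $\overline{\mathcal D_n}$ with $\{c_n=0\}$. The only difference is that you carefully justify the reverse inclusion (density of $\mathcal D_n$ in the hyperplane via back-substitution on the open locus where $c_0,\ldots,c_{n-1}\neq 0$, plus the linear independence of $c_m$ and $c_n$), a step the paper leaves implicit.
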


             \begin{proof} Since any $ES$-operator $T$ (nonstrictly) preserves
             the infinite flag $\mathcal P_{0}\subset \mathcal P_{1}\subset
             \ldots\subset \ldots$ of linear spaces of polynomials of at
most given
             degree it follows that $T$ has an eigenpolynomial of degree exactly $n$ if
             and only if
\begin{itemize}
             \item[(i)] $T$ restricted to $\mathcal P_{n}$ is degenerate,
\item[(ii)] the kernel of $T$ restricted to $\mathcal P_{n}$ intersects
             $\mathcal P_{n}\setminus \mathcal P_{n-1}$.
\end{itemize}
             \noindent
             The closure
$\overline {\mathcal D_{n}}$
             consists of all $T$ having an eigenpolynomial of degree 
at most $n$.
             Now $T$ is upper triangular in the standard 
             monomial basis and the $j$-th diagonal entry of $T$ equals
             $\sum_{i=0}^k j(j-1)\ldots(j-i+1) a_{i,i}$. Therefore 
             $\overline {\mathcal D_{n}}$ is given by equation
(\ref{eq:discr}).
\end{proof}

	\noindent
	\begin{rema+} If $T\in\overline {\mathcal D_{n}}$ but $T\notin\overline {\mathcal D_{l}}$ for $0\le l<n$ then $T$
	has a polynomial solution of degree exactly $n$. Otherwise $T$ has at least  a polynomial solution of degree equal to
	$\min\{l\in\{0,\ldots,n-1\}: T\in \overline {\mathcal D_{l}}\}$ (and
	probably  some other polynomial solutions as well).
	\end{rema+} 

	    We now turn to Proposition \ref{lm:basic}.

	\begin{proof} By Lemma \ref{lm:discr} the homogenized spectral
	pencil $T_{\la}$ has a polynomial solution of degree at most $n$
	if 
	\begin{equation}
\sum_{i=0}^kn(n-1)\ldots (n-i+1)a_{i,i}\la^{k-i}=0.
\label{eq:spect}
\end{equation}
This equation is  of degree exactly  $k$ in $\la$ since $a_{0,0}\neq 0$. Considering $n$
 for the moment as a complex variable, the equation defines an algebraic curve in $\bC^2$ with $k$ branches over $\bC$. The behavior of the branches at infinity may be found by substituting
          $\widetilde \la=\la/n$ and dividing the left-hand side of (\ref{eq:spect})
          by $n^k$, which  gives 
         \begin{equation}
         \label{eq:spect2}
         \sum_{i=0}^k 1 \cdot\frac {(n-1)}{n}\cdot\frac {(n-2)}{n}\cdot\ldots
          \cdot\frac{(n-i+1)}{n}\cdot a_{i,i}\cdot 
\widetilde\la^{k-i}=0.
\end{equation}
          If $n\to\infty$ the latter family of equations tends coefficientwise
          to equation (\ref{eq:char}). Since the latter has exactly $k$ different solutions, this is true for  (\ref{eq:spect2}) as well if $n\in N $ for some simply connected neighborhood $N\subset \bCP$ of infinity. Hence in $N$ there are $k$ different branches $\widetilde\la_{n,1},\ldots,\widetilde\la_{n,k}$ of the algebraic function $\widetilde\la$ defined as a function of the complex variable $n$ by equation (\ref{eq:spect2}), and corresponding branches $\la_{n,1},\ldots,\la_{n,k}$ of the algebraic function $\la$. 
          Then clearly one has
          $$\lim_{n\to \infty }\la_{n,j}/n =\lim_{n\to
\infty}\widetilde\la_{n,j}=\al_{j},\quad j=1,\ldots,k.$$ 
This completes the proof of part (ii).
To prove further that for each branch 
$\la_{n,j}$, $j=1,\ldots,k$,  the associated spectral pencil
$T_{\la}$ has a polynomial eigenfunction
$p_{n,j}(z)$ of degree {\it exactly} $n$, we need to show that for a $\la$ and $n$ that solve equation (\ref{eq:spect}) there is no $m\neq n$ such that $\la$ solves equation (\ref{eq:spect}) for that $m$. This follows if we first prove that there are no solutions $n\neq m$ to $\la_{n,j}=\la_{m,j}$, $j=1,\ldots,k$, and secondly that if $j_1\neq j_2$ then there are no solutions to $\la_{n,j_1}=\la_{m,j_2}$ (both statements in a possibly shrunk neighborhood of infinity). The first statement is an immediate consequence of the assumptions since they imply that $\al_j\neq 0$, $j=1,\ldots,k$, and thus for each $j$ the function 
$n\mapsto \la_{n,j}$ is one-to-one in a neighborhood of infinity.  For the second statement argue by contradiction as follows. If the statement is false there are unbounded sequences of positive integers $n_i,m_i$, $i=1,2,\ldots$ such that $\la_{n_i,j_1}=\la_{m_i,j_2}$ for all $i$. We may assume (by choosing a subsequence and possibly interchanging $j_1$ and $j_2$) that $\lim_{i\to\infty}m_i/n_i=r\in \bR$. Hence 
$$\al_{j_1}=\lim_{i\to\infty}\la_{n_i,j_1}/n_i=\lim_{i\to\infty}(\la_{m_i,j_2}/m_i)(m_i/n_i)=r\al_{j_2}
$$
and so the arguments of $\al_{j_1}$ and $\al_{j_2 }$ are equal, which contradicts the nondegeneracy assumptions in the proposition. This completes the proof.
\end{proof}

\noindent
\begin{rema+} It is possible and straightforward to calculate the asymptotics for the
eigenvalues even in the case when (\ref{eq:char}) has multiple or
vanishing roots.
\end{rema+}

Let us now prove the easy part of Theorem \ref{th:main}.

            \begin{Prop}
                \label{dlog}
                Let $\{p_{n,j}(z)\}$ be a family of polynomial
eigenfunctions of a homogenized
                spectral pencil $T_{\la}$ with corresponding family of eigenvalues
                $\{\la_{n,j}\}$, i.e., $p_{n,j}(z)$ satisfies the equation
	$T_{\lambda_{n,j}}p_{n,j}(z)=0$.
		Assume that the following holds:
\begin{itemize}
		\item[(i)] $\lim_{n\to \infty}\la_{n,j}=\infty$,
\item[(ii)]  there exists an
		open set $\Omega\subseteq \bCP$ where
	the normalized logarithmic derivatives $L_{n,j}(z)=\frac
{p_{n,j}'(z)}{\la_{n,j}p_{n,j}(z)}$
	are defined for all sufficiently large $n$  and the sequence
	$\{L_{n,j}(z)\}$ converges in $\Omega$ to a 
	function
	$$\Psi_{j}(z):=	\lim_{n\to \infty}L_{n,j}(z),
	$$
\item[(iii)] the $k-1$ first derivatives of the sequence $\{L_{n,j}(z)\}$
	are uniformly bounded in $\Omega$.
\end{itemize}
Then if  $\Psi_{j}(z)$ does not vanish identically  
        it satisfies the equation
	\begin{equation}
	\sum_{i=0}^kQ_{i}(z)\Psi^i_{j}(z)=0.
	\label{algfunc}
	\end{equation}
\end{Prop}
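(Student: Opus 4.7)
The plan is to convert the eigenvalue identity $T_{\lambda_{n,j}}p_{n,j}=0$ into an algebraic equation in $L_{n,j}$ plus a perturbation that vanishes as $n\to\infty$. The main tool is the standard logarithmic-derivative recursion: writing $u_n:=p'_{n,j}/p_{n,j}=\lambda_{n,j}L_{n,j}$ and defining universal polynomials $q_i$ in the indeterminates $u,u',\ldots,u^{(i-1)}$ by $q_0=1$ and $q_{i+1}=q_i'+u\,q_i$, one has $p^{(i)}_{n,j}/p_{n,j}=q_i(u_n,u_n',\ldots,u_n^{(i-1)})$. Assigning weight $1$ to every symbol $u^{(\ell)}$, a one-line induction (differentiation preserves weight, multiplication by $u$ raises it by $1$) shows that $q_i$ has maximal weight exactly $i$, attained only by the monomial $u^i$.

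Substituting $u_n^{(\ell)}=\lambda_{n,j}L_{n,j}^{(\ell)}$ sends each weight-$w$ monomial to $\lambda_{n,j}^{w}$ times the corresponding monomial in the $L_{n,j}^{(\ell)}$'s. Dividing the eigenvalue equation $\sum_{i=0}^k Q_i(z)\lambda_{n,j}^{k-i}p_{n,j}^{(i)}=0$ by $\lambda_{n,j}^k p_{n,j}$, the top-weight contributions assemble into $\sum_{i=0}^k Q_i(z)L_{n,j}^i$, and every remaining contribution has the form $Q_i(z)\lambda_{n,j}^{-s}$ with $s\ge 1$, multiplied by a fixed polynomial expression in $L_{n,j},L_{n,j}',\ldots,L_{n,j}^{(k-1)}$.

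To conclude, I would pass to the limit $n\to\infty$ at an arbitrary fixed $z\in\Omega$. By hypothesis (iii) the arguments of every remainder polynomial are bounded uniformly in $n$, so each remainder term is $O(\lambda_{n,j}^{-1})$ and tends to $0$ by hypothesis (i). By hypothesis (ii), $L_{n,j}(z)\to\Psi_j(z)$, hence $L_{n,j}^i(z)\to\Psi_j^i(z)$ for each $i$; continuity of the $Q_i$ then yields $\sum_{i=0}^k Q_i(z)\Psi_j^i(z)=0$ throughout $\Omega$. The non-identically-vanishing caveat is essentially automatic: since $Q_0(z)=a_{0,0}\neq 0$ under the general-type assumption, the identically zero function cannot satisfy the derived equation.

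The only step demanding actual care is the weight bookkeeping for the $q_i$, namely verifying that after division by $\lambda_{n,j}^k$ no positive power of $\lambda_{n,j}$ survives in the remainder. Once that small combinatorial point is in hand there is no genuine obstacle; the argument is simply the commutation of a pointwise limit with a polynomial expression, guaranteed by hypotheses (i)--(iii).
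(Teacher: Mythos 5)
Your proposal is correct and follows essentially the same route as the paper: the recursion $q_{i+1}=q_i'+u\,q_i$ is exactly the paper's $R_{i+1}=(\lambda L+D_z)*R_i$, and your weight bookkeeping is precisely the paper's observation that $R_i=\lambda^i L^i+\lambda^{i-1}F_i(L,\ldots,L^{(i-1)})$ with the remainder of degree at most $i-1$ in $\lambda$, after which dividing by $\lambda^k p$ and letting $n\to\infty$ under hypotheses (i)--(iii) gives the algebraic equation. No gaps.
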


\begin{proof} In order to simplify the notation in this proof let us fix
the value of the
           index $j\in\{1,\ldots,k\}$ and simply drop it.

Note that each $L_{n}(z)=\frac{p_{n}'(z)}{\la_{n}p_{n}(z)}$
is well defined and analytic in any disk $\DD$ free from the zeros of
$p_{n}(z)$.
             Choosing such a disk $\DD$ and an appropriate branch of the
logarithm such that $\log p_{n}(z)$ is defined in $\DD$ let us consider a
primitive function
$M(z)=\lambda^{-1}_{n}\log p_{n}(z)$ which is also well defined and
analytic in $\DD$.

Straightforward calculations give: $e^{\lambda_{n}M(z)}=p_{n}(z),\;$
$p'_{n}(z)=p_{n}(z)\lambda_{n}
L_{n}(z)$, and
$p''_{n}(z)=p_{n}(z)(\lambda^2_{n}L^2_{n}(z)+\lambda_{n}L_{n}'(z))$.
More generally,
$$
\frac{d^i}{dz^i}(p_{n}(z))p_{n}(z)\left(\lambda^i_{n}L^i_{n}(z)+\lambda^{i-1}_{n}F_{i}(L_{n}(z),L'_{n}(z),
\ldots,
L^{(i-1)}_{n}(z))
\right),$$
where the second term
\begin{equation}
	\lambda_{n}^{i-1}F_{i}(L_n,L_n',\ldots,L_n^{(i-1)})
	\label{eq:Fdesc}
\end{equation}
is a polynomial of degree $i-1$ in $\lambda_{n}$. 
The equation $T_{\lambda_{n}}p_{n}(z)=0$ gives us
$$
p_{n}(z)
\left(\sum_{i=0}^{k}Q_{i}(z)\la_{n}^{k-i}\left(\lambda_{n}^iL_{n}^{i}(z)+\lambda_{
n}^{i-1}F_
{i}(L_{n}(z),L'_{n}(z),\ldots,L^{(i-1)}_{n}(z))\right)\right)=0
$$
or equivalently,
\begin{equation}
	\label{Dlog}
	\lambda_{n}^k\sum_{i=0}^{k}Q_{i}(z)\left(L^i_{n}(z)+\lambda_{n}^{-1}F_{i}(L_{n}(z),L
'_{n}(z),
	\ldots,L^{(i-1)}_{n}(z))\right)=0.
\end{equation}
Letting $n\to \infty$ and using  the
boundedness assumption for the first $k-1$ derivatives we get the required
equation (\ref{algfunc}).
\end{proof}

We end this section by establishing an important  property of the curve
$\Ga$ given by (\ref{eq:basic}). Note that
          unless $Q_{k}(x)\equiv 0$ the curve $\Ga$
          is a $k$-sheeted
branched covering of the $z$-plane. We want to describe
the behavior of $\Ga$ at infinity.
Using a change of coordinate $y:=z^{-1}$ we can rewrite equation
(\ref{eq:basic})
as
$$
\sum_{i=0}^{k}Q_{i}(z)w^{i}=\sum_{i=0}^{k}z^{-i}Q_{i}(z)(wz)^{i}=\sum_{i=0}^{k}P_{i}(y)\xi^{i}=0,
$$
where
$$
P_{i}(y)=z^{-i}Q_{i}(z)=\sum_{j=0}^{i}a_{ij}y^{i-j}$$
and $\xi:=wz=w/y$.
Note also that at the point $y=0$ one gets the
reciprocal characteristic equation
\begin{equation}
	\label{lambdadef}
	a_{k,k}\xi^{k}+a_{k-1,k-1}\xi^{k-1}+\ldots+a_{0,0}=0.
	\end{equation}

	\noindent
\begin{rema+}
	The roots $\xi_{1},\ldots,\xi_{k}$ of (\ref{lambdadef}) are
the inverses of the roots
	$\al_{1},\ldots,\al_{k}$, respectively, of (\ref{eq:char}).
	\end{rema+} 

Using the above argument we get the following simple statement.

	\begin{Lemma} If the roots $\xi_{1},\ldots,\xi_{k}$
		of equation (\ref{lambdadef}) are pairwise distinct then there
are $k$	branches $\ga_{i}(z)$, $i=1,\ldots,k$, of the curve $\Ga$ 
		that are well defined in some common neighborhood of $z=\infty$. The
$i$-th	branch $\ga_{i}(z)$ satisfies the normalization condition 
		$
		\lim_{z\to \infty}z\ga_{i}(z)=\xi_{i}.
		$
		\label{lm:gen}
		\end{Lemma}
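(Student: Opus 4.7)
The plan is to apply the holomorphic implicit function theorem in the coordinates $y=z^{-1}$ and $\xi=w/y$ introduced immediately before the statement. Set $F(y,\xi):=\sum_{i=0}^{k}P_i(y)\xi^i$, which is a polynomial in $(y,\xi)$. At $y=0$ this polynomial specializes to the reciprocal characteristic equation (\ref{lambdadef}), whose roots are by hypothesis the pairwise distinct numbers $\xi_1,\ldots,\xi_k$. Note that this already forces $a_{k,k}\neq 0$ (otherwise the equation would have degree less than $k$), so $F(y,\xi)$ retains degree $k$ in $\xi$ near $y=0$.

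Because each $\xi_i$ is a simple root of $F(0,\cdot)$, the partial derivative $\partial_\xi F(0,\xi_i)$ is nonzero. I would then invoke the holomorphic implicit function theorem to produce, for every $i$, a function $\tilde\xi_i(y)$ holomorphic on some disk $\{|y|<\de_i\}$ with $\tilde\xi_i(0)=\xi_i$ and $F(y,\tilde\xi_i(y))\equiv 0$. Setting $\de:=\min_i \de_i$, all $k$ branches are simultaneously defined on the common neighborhood $\{|y|<\de\}$, i.e.\ on $\{|z|>1/\de\}$. Since the initial values $\xi_1,\ldots,\xi_k$ are distinct and the $\tilde\xi_i$ are continuous, after possibly shrinking $\de$ the values $\tilde\xi_i(y)$ remain pairwise distinct throughout the disk, and by counting they exhaust all $k$ roots of $F(y,\cdot)$.

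Pulling back, I define $\ga_i(z):=y\,\tilde\xi_i(y)=\tilde\xi_i(1/z)/z$ for $|z|>1/\de$. Since the substitution $w=\xi y$ was what related the two equations, each $\ga_i(z)$ is a well-defined holomorphic branch of $\Ga$ near $\infty$, and the required normalization
\[
\lim_{z\to\infty}z\,\ga_i(z)=\lim_{z\to\infty}\tilde\xi_i(1/z)=\tilde\xi_i(0)=\xi_i
\]
is immediate.

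There is no real obstacle: the argument is a textbook application of the implicit function theorem once the change of variables has been performed. The only small point that merits a line is to note that the hypothesis on the distinctness of the $\xi_i$ implicitly guarantees $a_{k,k}\neq 0$, so that all $k$ branches are accounted for and none of them runs off to infinity as $y\to 0$.
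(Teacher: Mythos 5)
Your proof is correct and follows exactly the route the paper intends: the paper gives no detailed argument, simply asserting that the lemma follows from the preceding change of variables $y=z^{-1}$, $\xi=w/y$, and the intended justification is precisely the holomorphic implicit function theorem applied to $\sum_{i=0}^k P_i(y)\xi^i$ at the simple roots of the reciprocal characteristic equation at $y=0$. Your remark that the hypothesis of $k$ pairwise distinct roots forces $a_{k,k}\neq 0$ is a worthwhile extra line of care.
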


\section{Solving equation (\ref{Dlog}) formally}\label{sec:3}

This and the following two sections are completely devoted to the proof of 
Theorems~\ref{th:main} and~\ref{th:c}. The sketch of the proof of Theorem~\ref{th:main} is as follows. In Proposition \ref{dlog} of \S 2 we have transformed the linear differential equation for the eigenpolynomials into a non-linear Riccati type equation for the logarithmic derivative. In this section we first analyze closely the terms of this new equation. 
We then describe the recursion scheme for solving this equation formally in a neighborhood of infinity and see how the solutions behave when $\lambda\to\infty$. Finally, in the next section we show that there is a neighborhood of   $z=\infty$ where the formal solutions are analytic and we complete the proofs of Theorems~\ref{th:main} and~\ref{th:c} in \S \ref{sec:5}.

Throughout this section we use the variable $y:=1/z$ near $z=\infty$.

\medskip \noindent
{\bf The differential algebra $A_{z,L}$.}
As the {\em first step} we describe the terms occurring in equation
(\ref{Dlog}) more
precisely. It is convenient to do this in a universal setting using
the following infinitely generated free commutative $\bC$-algebra
(or rather the free differential algebra, cf. \cite{Kaplansky})
$$
A_{z,L}:={\bC}[\lambda,\lambda^{-1},z,z^{-1},L^{(0)},L^{(1)},L^{(2)},\ldots].
$$
This algebra should be thought of as (a universal object) containing the terms in equation (\ref{Dlog}).  Concrete instances can be obtained by specialization. In particular, the $L^{(i)}$'s correspond to the normalized logarithmic derivatives in  (\ref{Dlog}).

Note that the monomials $\lambda^{i}z^{j}L^{I}$ form a  basis of
$A_{z,L}$ considered as a vector space, where for any
multi-index $I=(i_{1},\ldots,i_{r})$
         the symbol $L^{I}$ denotes the product
$$
L^{I}=\prod_{s=1}^{r} L^{(i_{s})}.
$$
It suffices to use multi-indices $I$ that are finite
non-decreasing sequences of non-negative
integers. Denote
the set of all such multi-indices by $FS$. (By definition, $L^{(0)}:=L$.)
Such index sequences may alternatively be
thought of as finite multisets. In particular, we include the empty sequence
$\emptyset $ in $FS$ and
interpret $L^{\emptyset}:=1$. For a given multi-index $I$
define its {\it modulus} by $\vert I\vert=\sum_{s=1}^{r}i_{s}$ and its
        {\it length} by $lng(I)=r$. (By definition, $\vert 
\emptyset\vert:=0$ and
        $lng(\emptyset):=0$).  For a given $I\in FS$ denote by $I_{+}$ the
sequence obtained from
$I$ by discarding all its $0$ elements.

$A_{z,L}$ is equipped with a natural  derivation $D_{z}$ which is a
prototype of $\frac{d}{dz}$. Namely, $D_{z}$ is uniquely defined by the
relations:\quad  $D_{z} *\lambda=0$, $D_{z} *z=1$ and $D_{z}
*L^{(i)}=L^{(i+1)}$. (We
 use the symbol ``$*$'' to denote the action of a differential operator
        as opposed to the product of differential operators. Note that
the ring of
differential operators generated by $A_{z,L}$ and $D_{z}$ acts on $A_{z,L}$.)

\medskip\noindent
{\bf The normalized logarithmic derivative in the universal setting.}
We can next use $A_{z,L}$ to describe the relation between the derivatives of an eigenpolynomial and its normalized logarithmic derivative. This is done by defining a differential $A_{z,L}$-module.
Consider the free rank one $A_{z,L}$-module
	$A_{z,L}e^{M}$, where $e^{M}$ is the generator.  Define
$D_{z}*e^{M}:=\lambda Le^{M}$ and extend the
	action of $D_{z}$ to the whole $A_{z,L}e^{M}$ using the Leibnitz rule. Note that this action intuitively just says that $L$ is the normalized logarithmic derivative $ D_{z}*e^{M}/\lambda e^{M}$ of the generator $e^M$.
The following lemma can be easily proved by induction.
\begin{Lemma}
	\label{firstformal}  In the above notation one has 
	$
	(D_{z})^{i}*e^{M}=R_{i}e^{M},
	$
	where $R_{0}=1$, $R_{1}=\lambda L$ and $R_{i+1}=(\lambda
	L+D_{z})*R_{i}$ for $i\ge 1$. In other words,
	$$R_{i}=(\lambda
	L+D_{z})^{i}*1=(\lambda
	L+D_{z})^{i-1}*\lambda L,\quad i\ge 1.
	$$
	\end{Lemma}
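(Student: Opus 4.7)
The plan is to proceed by straightforward induction on $i$, using only the Leibniz rule for the derivation $D_{z}$ on $A_{z,L}e^{M}$ together with the defining relation $D_{z}*e^{M}=\lambda L e^{M}$.

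First I would check the base cases. For $i=0$ the statement reduces to $e^{M}=R_{0}e^{M}$ with $R_{0}=1$, which is immediate. For $i=1$, the definition of the $A_{z,L}$-module structure gives $D_{z}*e^{M}=\lambda L e^{M}$, so indeed $R_{1}=\lambda L$.

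For the inductive step, assume that $(D_{z})^{i}*e^{M}=R_{i}e^{M}$ for some $i\ge 1$ with $R_{i}\in A_{z,L}$. Applying $D_{z}$ to both sides and using the Leibniz rule, which holds by construction of the action of $D_{z}$ on $A_{z,L}e^{M}$, one computes
\[
(D_{z})^{i+1}*e^{M}=D_{z}*(R_{i}e^{M})=(D_{z}*R_{i})e^{M}+R_{i}(D_{z}*e^{M})=\bigl((D_{z}*R_{i})+\lambda L R_{i}\bigr)e^{M}.
\]
Factoring the bracket as $(\lambda L+D_{z})*R_{i}$ (viewed as the action of the first-order differential operator $\lambda L+D_{z}$ on the element $R_{i}\in A_{z,L}$) identifies $R_{i+1}=(\lambda L+D_{z})*R_{i}$, closing the induction.

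Finally, the two closed-form expressions come from unwinding the recursion: iterating the relation $R_{i+1}=(\lambda L+D_{z})*R_{i}$ starting from $R_{0}=1$ yields $R_{i}=(\lambda L+D_{z})^{i}*1$, while starting from $R_{1}=\lambda L$ and iterating $i-1$ times yields $R_{i}=(\lambda L+D_{z})^{i-1}*\lambda L$. There is no real obstacle here; the only point that warrants a brief comment is that the Leibniz identity is applied to an element of the module $A_{z,L}e^{M}$ rather than of the algebra, which is legitimate because $D_{z}$ was extended from $A_{z,L}$ to $A_{z,L}e^{M}$ precisely by the Leibniz rule together with the single defining rule $D_{z}*e^{M}=\lambda L e^{M}$.
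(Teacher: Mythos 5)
Your proof is correct and is exactly the straightforward induction that the paper itself alludes to (the paper states only that the lemma ``can be easily proved by induction'' and omits the details). The base cases, the Leibniz-rule computation in the inductive step, and the unwinding of the recursion into the two closed forms are all as intended.
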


The $R_i$ considered as polynomials $R_i(\lambda,L^{(0)},\ldots)$ have (by universality) the following property. 

\begin{Lemma}
	\label{firstuniversal} Let $g={\lambda}^{-1}{D\log f}$, where $f$ is a non-vanishing analytic function in an open subset $\Omega\subset {\bC}$ and $\lambda\in{\bC}\setminus\{0\}$ (so $g$ is the logarithmic derivative of $f$ normalized  with respect to $\lambda$).
	Then in the above notation one has 
	 $$f^{(i)}=R_i(\lambda,g,g^{(1)},\ldots.)f,\quad i\ge 1.
	 $$	 	\end{Lemma}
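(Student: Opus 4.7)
The plan is to deduce the statement from the universal identity $(D_z)^i * e^M = R_i e^M$ of Lemma~\ref{firstformal} by specialization. Concretely, I would define a $\bC$-algebra homomorphism
$$\Phi : A_{z,L} \longrightarrow \mathcal{O}(\Omega)$$
which is the identity on the subring $\bC[\lambda,\lambda^{-1},z,z^{-1}]$ (viewing $z$ as the coordinate on $\Omega$) and sends $L^{(i)} \mapsto g^{(i)}$, where now $g^{(i)}$ denotes the honest analytic $i$-th derivative of $g$ with respect to $z$. The first thing to verify is that $\Phi$ intertwines the formal derivation $D_z$ on $A_{z,L}$ with $d/dz$ on $\mathcal{O}(\Omega)$: both are $\bC[\lambda,\lambda^{-1}]$-linear derivations, both send $z$ to $1$, and by construction $\Phi(D_z * L^{(i)}) = g^{(i+1)} = (d/dz)\,\Phi(L^{(i)})$. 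Since the two derivations agree on a generating set, they agree everywhere on $A_{z,L}$.

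Next I would extend $\Phi$ to the differential module $A_{z,L} e^M$ by setting $\Phi(e^M) := f$, and check that this extension still intertwines $D_z$ with $d/dz$. The only nontrivial compatibility is at $e^M$ itself, and this is precisely the hypothesis $g = \lambda^{-1} D\log f$, i.e.\ $f' = \lambda g f$:
$$\Phi(D_z * e^M) = \Phi(\lambda L e^M) = \lambda g f = f' = \frac{d}{dz}\Phi(e^M).$$
By the Leibniz rule applied to arbitrary elements $P e^M$ with $P \in A_{z,L}$, this forces $\Phi((D_z)^i * \phi) = (d/dz)^i\, \Phi(\phi)$ for all $\phi \in A_{z,L} e^M$ and all $i \geq 0$. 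Applying $\Phi$ to $(D_z)^i * e^M = R_i e^M$ therefore yields $f^{(i)} = R_i(\lambda, g, g^{(1)}, \ldots)\, f$, which is the desired identity.

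There is essentially no real obstacle beyond bookkeeping: the recursion $R_{i+1} = (\lambda L + D_z) * R_i$ inside $A_{z,L}$ and the successive differentiations of $f$ in $\mathcal{O}(\Omega)$ obey formally identical rules, and the hypothesis on $g$ provides exactly the single compatibility needed for the specialization to be a morphism of differential modules. For readers who prefer to avoid the universal-algebra language, the same statement admits a direct induction on $i$: the base case $i = 1$ is the defining relation $f' = \lambda g f$, and the inductive step from $f^{(i)} = R_i f$ to $f^{(i+1)} = R_{i+1} f$ uses the Leibniz rule together with the recursion for $R_{i+1}$, with $\lambda g f$ arising precisely from the term $\lambda L \cdot R_i$.
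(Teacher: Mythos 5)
Your proof is correct and is precisely the formalization of what the paper does: the paper offers no written proof of this lemma, merely asserting that it holds ``by universality'' as a specialization of Lemma~\ref{firstformal}, and your homomorphism $\Phi$ with $e^M\mapsto f$, $L^{(i)}\mapsto g^{(i)}$ is exactly the intended specialization argument. The only cosmetic caveat is that since $A_{z,L}$ contains $z^{-1}$ you should either assume $0\notin\Omega$ or note that $R_i$ involves only $\lambda$ and the $L^{(j)}$, so the relevant part of $\Phi$ never touches $z^{-1}$.
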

	
\medskip\noindent
{\bf The differential algebra $A_{z,L}$ at $\infty $.}
Next we  rewrite (\ref{Dlog}) using the variable $y=z^{-1}$.
Note that if $p(z)$ is a non-constant polynomial then  for any $\kappa\neq
0$ the logarithmic derivative $L(z)=p'(z)/\kappa
p(z)$,
        rewritten using the variable $y=\frac{1}{z}$, has a simple zero at
        $y=0$.
Hence we should look for solutions  of equation
(\ref{Dlog}) in the form $L(z)=yN(y)$.

First we describe $A_{z,L}$ near $z=\infty$. For this we define an analogous free
commutative algebra
$$
B_{y,N}:={\bC}[\lambda,\lambda^{-1},y,y^{-1},N^{(0)},N^{(1)},\ldots].
$$
As above, it has  a natural derivation $D_{y}$ which is a prototype
of $\frac{d}{dy}$
satisfying the relations:
$\,D_{y}*\lambda=0$, $D_{y}*y=1$ 
and $D_{y}*N^{(i)}=N^{(i+1)}$. Note that
$\frac{d}{d z}=-y^2\frac{d}{d y}$  and recall that $L:=L^{(0)}$ and
$N:=N^{(0)}$. Define an injection $\Theta:A_{z,L}\to B_{y,N} $  of algebras
determined by 
$$\Theta(\lambda)=\lambda, \,\Theta(z)=y^{-1},\, 
           \Theta(L)=yN,\,\Theta(L^{(i)})=(-y^2D_{y})^{i}*yN.$$

The following lemma describes the
connection between $A_{z,L}$ and $B_{y,N}$.

\begin{Lemma}
           \label{2formal}
           The injection $\Theta$ has the property that for any  $a\in A_{z,L}$
           one has
           $$
           \Theta(D_{z}^{i}*a)=(-y^2D_{y})^{i}*\Theta(a).
           $$
           \end{Lemma}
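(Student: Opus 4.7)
The plan is to reduce to the case $i=1$ by induction and then verify the identity $\Theta(D_z * a) = (-y^2 D_y) * \Theta(a)$ on generators of $A_{z,L}$. The inductive step is immediate: assuming $\Theta(D_z^i * a) = (-y^2 D_y)^i * \Theta(a)$ for all $a$, one has
\[
\Theta(D_z^{i+1} * a) = \Theta(D_z * (D_z^i * a)) = (-y^2 D_y) * \Theta(D_z^i * a) = (-y^2 D_y)^{i+1} * \Theta(a),
\]
where the middle equality is the $i=1$ case applied to $D_z^i * a$. So everything hinges on the base case.

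For the base case, I would observe that the map $a \mapsto \Theta(D_z * a) - (-y^2 D_y) * \Theta(a)$ is an additive $\bC$-linear map from $A_{z,L}$ to $B_{y,N}$, and that by the Leibniz rule (applied to $D_z$, to $-y^2 D_y$, and to the algebra homomorphism $\Theta$) it is a derivation from $A_{z,L}$ to $B_{y,N}$ with $A_{z,L}$-module structure on $B_{y,N}$ induced by $\Theta$. Hence it suffices to verify that it vanishes on the algebra generators $\lambda, \lambda^{-1}, z, z^{-1}, L^{(0)}, L^{(1)}, L^{(2)}, \ldots$. I would then check these cases one at a time: for $\lambda$ and $\lambda^{-1}$ both sides are $0$; for $z$ one has $\Theta(D_z * z) = 1$ while $(-y^2 D_y) * \Theta(z) = (-y^2 D_y) * y^{-1} = 1$; for $z^{-1}$ one has $\Theta(-z^{-2}) = -y^2$ and $(-y^2 D_y) * y = -y^2$; and for $L^{(i)}$ one has $\Theta(D_z * L^{(i)}) = \Theta(L^{(i+1)}) = (-y^2 D_y)^{i+1} * (yN)$ while $(-y^2 D_y) * \Theta(L^{(i)}) = (-y^2 D_y) * (-y^2 D_y)^i * (yN) = (-y^2 D_y)^{i+1} * (yN)$, which matches by the very definition of $\Theta$ on the $L^{(i)}$.

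There is no real obstacle here: the definition of $\Theta$ has been rigged precisely so that $\Theta$ intertwines $D_z$ with the pullback $-y^2 D_y$ of $d/dz$ under $y = z^{-1}$, and the intertwining property then propagates to all of $A_{z,L}$ by the derivation argument and to all powers $i$ by induction. The only mild point requiring care is the bookkeeping that ensures $\Theta$ is well-defined on $\lambda^{-1}$ and $z^{-1}$ (compatibility with the relations $\lambda \cdot \lambda^{-1} = 1$, $z \cdot z^{-1} = 1$), but this is automatic since $\Theta$ is declared to be an algebra homomorphism.
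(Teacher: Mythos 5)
Your proof is correct and follows essentially the same route as the paper: verify the $i=1$ identity on the generators of $A_{z,L}$, extend to all of $A_{z,L}$ via the derivation/Leibniz property, and then handle $i>1$ by induction. Your write-up is somewhat more explicit about the generator checks than the paper's, but the argument is the same.
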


        \begin{proof} By definition the above formula is valid in the
case $i=1$ for
           all the generators of $A_{z,L}$. Since $D_{z}$ and $-y^2D_{y}$
        are derivations the formula then works for all elements in this
        algebra and $i=1$.
        Simple induction  shows that the above formula is valid even for all
        $i>1$.
            \end{proof}

            \medskip\noindent
            {\bf Main lemma on $R_i$.}
The preceding lemmas imply the following relation:
\begin{equation}
           \lambda^{-i} y^{-i}\Theta
(R_{i})=y^{-i}(yN-y^2\lambda^{-1}D_{y})^{i} *1=y^{-i}(yN-y^2\lambda^{-1}D_{y})^{i-1}y *N.
\label{eq:theta}
\end{equation}

We need to know which monomial terms of the form
$\lambda^{\alpha_{1}}y^{\alpha_{2}}N^I$ occur in this
equation.
   These monomials are contained in the
subalgebra  $B_{0}\subset B_{y,N}$ defined as 
$$
B_{0}:={\bC}[\lambda^{-1},y,N,\lambda^{-1}yN^{(1)},\ldots,(\lambda^{-1}y)^{j}N^{
(j)},\ldots],
$$
see Lemma~\ref{lm:form} below. Define a (necessarily two-sided, by commutativity) 
ideal $J\subset B_{0}$ as follows:
$$J:=<\lambda^{-1},(\lambda^{-1}y)^{2}N^{(1)}N^{(1)},\ldots,
(\lambda^{-1}y)^{j_{1}+j_{2}}N^{(j_{1})}N^{(j_{2})}\ldots>,$$
where $j_{1}\ge 1,\; j_{2}\ge 1$. (As we will see the ideal $J$
contains all the terms
that do not influence the asymptotic behavior of the equation
(\ref{Dlog}).)

One can easily show that  the set of all monomials  of the form
$$(\lambda^{-1})^{\alpha_{1}}y^{\alpha_{2}}(\lambda^{-1}y)^{\vert I\vert}N^I,$$
where $\alpha_{i}\in {\bZ_{\geq 0}},\ i=1,2$, and  $I\in FS$
constitutes a basis of $B_{0}$ as a vector space.
Such a monomial belongs to $J$
if and only if either $\alpha_{1}\geq 1$ or
$lng(I_{+})\geq 2$.

        \begin{Lemma}\label{lem-6}
For all $i\geq 0$ the following identity  between
        elements in $B_{0}$ holds:
        \begin{equation}
\lambda^{-i} y^{-i}\Theta
(R_{i})=N^{i}+\sum_{j=1}^{i-1}\binom{i}{j+1}
(-1)^{j}(\lambda^{-1}y)^{j}N^{i-1-j}N^{(
j)}+h
\label{eq:form}
\end{equation}
for some unique $h\in J$. Moreover, $if i\geq 1$, the exponents of all
non-vanishing monomials
$(\lambda^{-1})^{\alpha_{1}}y^{\alpha_{2}}(\lambda^{-1}y)^{\vert I\vert}N^I$ 
occurring in the right-hand side  of (\ref{eq:form}) satisfy the
inequality 
        $\alpha_{1}+\vert I\vert\leq i-1$.
    \label{lm:form}    
	\end{Lemma}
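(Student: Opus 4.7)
The plan is to prove both assertions of (\ref{eq:form}) by induction on $i$. The first step is to translate the recursion for $R_i$ from Lemma~\ref{firstformal} into a recursion for $A_i:=\lambda^{-i}y^{-i}\Theta(R_i)$, which is precisely the quantity whose formula is claimed. Applying $\Theta$ to $R_{i+1}=(\lambda L+D_z)*R_i$ and using Lemma~\ref{2formal} together with $\Theta(\lambda L)=\lambda yN$ gives
$$\Theta(R_{i+1})=(\lambda yN-y^2D_y)*\Theta(R_i),$$
and normalising the prefactors yields the clean recursion
$$A_{i+1}=NA_i-\lambda^{-1}y\,D_y A_i-i\lambda^{-1}A_i.$$
The base case $i=1$ is immediate from $R_1=\lambda L$: one gets $A_1=N$, matching (\ref{eq:form}) with empty sum and $h=0$.

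The induction step would be carried out working modulo $J$. The last summand $-i\lambda^{-1}A_i$ is automatically in $J$ since $\lambda^{-1}\in J$. For $-\lambda^{-1}y\,D_y A_i$, I would apply the Leibniz rule to each explicit summand from the induction hypothesis; three kinds of terms arise: (a) differentiation of a power of $y$ produces an extra factor of $\lambda^{-1}$ and lands in $J$; (b) differentiation of a factor $N=N^{(0)}$ produces $N^{(1)}N^{(j)}$ with $j\ge 1$, again in $J$ by the second type of generator; (c) differentiation of $N^{(j)}$ turns it into $N^{(j+1)}$, the only branch that survives modulo $J$. Applied to the top term $N^i$ this yields $-i(\lambda^{-1}y)N^{i-1}N^{(1)}$, while applied to $(\lambda^{-1}y)^jN^{i-1-j}N^{(j)}$ it yields $-(\lambda^{-1}y)^{j+1}N^{i-1-j}N^{(j+1)}$. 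The term $NA_i$ simply shifts each summand up by one power of $N$. Collecting the coefficient of $(\lambda^{-1}y)^{j'}N^{i-j'}N^{(j')}$ in $A_{i+1}$, the Pascal identity $\binom{i}{j'+1}+\binom{i}{j'}=\binom{i+1}{j'+1}$, together with the boundary checks $\binom{i}{2}+i=\binom{i+1}{2}$ and $\binom{i}{i}=\binom{i+1}{i+1}$, produces exactly the required formula for $A_{i+1}$. Uniqueness of $h\in J$ is automatic because the monomial basis of $B_0$ splits cleanly into monomials in $J$ and monomials not in $J$ (the latter being those with $\alpha_1=0$ and $\mathrm{lng}(I_{+})\le 1$).

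The exponent bound $\alpha_1+|I|\le i-1$ can be extracted from the same recursion. This quantity is exactly the total $\lambda^{-1}$-exponent when a basis monomial $(\lambda^{-1})^{\alpha_1}y^{\alpha_2}(\lambda^{-1}y)^{|I|}N^I$ is rewritten in the standard generators $\lambda^{-1},y,N^{(\bullet)}$. A term-by-term analysis of the three operations producing $A_{i+1}$ shows that multiplication by $N$ leaves this exponent unchanged, multiplication by $\lambda^{-1}$ raises it by exactly one, and the operator $\lambda^{-1}y\,D_y$ raises it by exactly one as well (the extra $\lambda^{-1}$ is gained, while $D_y$ either lowers the $y$-power by one or turns $N^{(s)}$ into $N^{(s+1)}$, whose basis form requires one additional factor $(\lambda^{-1}y)$). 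Starting from $A_1=N$, whose only monomial has exponent zero, iteration gives $\alpha_1+|I|\le i-1$ for $A_i$.

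The main technical obstacle will be the bookkeeping in the induction step: one must identify precisely which Leibniz branches land in $J$ and which survive, and then verify that the surviving coefficients collapse via Pascal's identity to $\binom{i+1}{j+1}$. In effect, the ideal $J$ is engineered exactly so that a single Leibniz branch contributes to the non-$J$ part of $A_{i+1}$, which is what makes the closed form (\ref{eq:form}) possible.
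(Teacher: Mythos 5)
Your proposal is correct and follows essentially the same route as the paper: the recursion $A_{i+1}=NA_i-\lambda^{-1}yD_yA_i-i\lambda^{-1}A_i$ you derive is exactly the paper's identity for the operators $L_i$ specialized to $v=N$, and the induction modulo $J$ via the Leibniz-branch analysis and Pascal's identity is the same computation. The only presentational difference is that the paper isolates, as a separate inductive sub-claim, the fact that $N$, $\lambda^{-1}$ and $\lambda^{-1}yD_y$ preserve $B_0$ and $J$ (needed so that the inherited error term $h\in J$ stays in $J$), a point your branch analysis covers implicitly but should state explicitly.
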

\begin{proof}
        The case $i=0$ is trivial so we assume that $i\geq 1$. 

\medskip\noindent
{\em Claim.} The differential operator
$
L_{i}:=y^{-(i+1)}(yN-y^2\lambda^{-1}D_{y})^{i  } y
$
preserves both $B_{0}$ and $J$ and satisfies the relation
\begin{equation}
L_{i}*N=N^{i+1}+\sum_{j=1}^{i}\binom{i+1}{j+1}(-1)^{j}N^{i-j}
(\lambda^{-1}y)^{j}N^{(j)}\;(\mathrm{ mod\;} J),\label{lm:form1}\end{equation}
where $(\mathrm{ mod\;} J)$ means that the expression is considered modulo the ideal
$J\subset B_{0}$.

\medskip

The lemma immediately follows from this claim. Indeed, note that by Lemma \ref{firstformal} one has 
\begin{equation}
           \lambda^{-i} y^{-i}\Theta
           (R_{i})   =        y^{-i}(yN-y^2\lambda^{-1}D_{y})^{i-1}y *N =L_{i-1}*N 
           \label{eq:k-1}
\end{equation}
and so \eqref{eq:form} is a consequence of \eqref{eq:k-1} and \eqref{lm:form1}. It thus remains to prove the three assertions in the above claim, which we do below by induction. 
           As the base of induction note that since $L_{0}=1$ it is obvious that $L_{0}$ 
	preserves both $B_{0}$ and $J$ and that it satisfies (\ref{lm:form1}).

\medskip	\noindent{\bf Induction steps.}
We first show inductively that all operators $L_i$ preserve both $B_{0}$ and $J$ and then using this we check formula \eqref{lm:form1} again by induction. Now
the following identity is immediate:
$$L_{i+1}*v=y^{-(i+2)}(yN-y^2\lambda^{-1}D_{y})y^{i+1}L_{i}*v=$$
   \begin{equation}\label{lm:ind2}
=(N-(i+1)\lambda^{-1}
)L_{i}*v-y\lambda^{-1}D_{y}(L_{i}*v),\quad v\in B_0.
\end{equation}
Hence in order to check that $B_{0}$ and $J$ are preserved by $L_{i+1}$
it suffices (using the induction assumption) to
show that both $(N-(i+1)\lambda^{-1})$ and $y\lambda^{-1}D_{y}$
preserve $B_{0}$
and $J$.
Since $N$ and $\lambda^{-1}$ are contained
in $B_{0}$, it is clear that they preserve both $B_{0}$ and  $J$, so it is enough to verify that the same holds for $\lambda^{-1}yD_{y}$. Let us first show that the latter operator preserves the ideal $J$, under the assumption that it preserves $B_0$. For this we note that an arbitrary
element $h\in J$ may be written as $h=\sum b_{i}h_{i}$, where $h_{i}$ are  the generators given in
the definition of $J$  and $b_{i}\in B_{0}$. Thus it is enough to prove that
$y\lambda^{-1}D_{y}*b_ih_i\in J$. As we will now explain, this property follows from the identity
$$
y\lambda^{-1}D_{y}*b_{i}h_{i}=y\lambda^{-1}h_{i}(D_{y}*b_{i})+y\lambda^{-1}b_{i}
(D_{y}*h_{i}).
$$
Indeed, we claim that its right-hand side belongs to $J$. To show this note that the first term
         clearly belongs to $J$, so it suffices to
check that $y\lambda^{-1}(D_{y}*h_{i})\in J$. Now if $h_{i}=\lambda^{-1}$ then this is again obvious. For the other generators we simply use
the identity
\begin{eqnarray*}
y\lambda^{-1}D_{y}*(\lambda^{-1}y)^{j_{1}+j_{2}}N^{(j_{1})}N^{(j_{2})}=(j_{1}+j_{2})\lambda^{-1}(\lambda^{-1}y)^{j_{1}+j_{2}}N^{(j_{1})}N^{(j_{2})}+&\\
+(\lambda^{-1}y)^{j_{1}+j_{2}+1}N^{(j_{1}+1)}N^{(j_{2})}+
(\lambda^{-1}y)^{j_{1}+j_{2}+1}N^{(j_{1})}N^{(j_{2}+1)}.&
\end{eqnarray*}
The terms in the right-hand side of the latter formula clearly belong to $J$ and so from \eqref{lm:ind2} we get that $L_{i+1}$ preserves $J$ as soon as 
$L_i$ does. The fact that $\lambda^{-1}yD_{y}$ preserves $B_{0}$ is proved in
exactly the same fashion, namely by first checking that
this is true for the generators of $B_0$ and then applying an inductive
argument based on the observation that
$\lambda^{-1}yD_{y}$ is a derivation.

Now that we established that both $B_0$ and $J$ are preserved by the operators $L_i$ we use this information to check the
induction step in formula (\ref{lm:form1}).
Since $\lambda^{-1}\in J$, equation (\ref{lm:ind2}) taken modulo $J$
gives 
\begin{equation}
L_{i+1}*v=(N-(i+1)\lambda^{-1}
)L_{i}*v-y\lambda^{-1}D_{y}(L_{i}*v)
=NL_{i}*v-y\lambda^{-1}D_{y}(L_{i}*v)
\label{lm:ind3}
\end{equation}
for  $v\in B_0$. Using the relations:\quad $y\lambda^{-1}D_{y}*N^{i+1}=(i+1)y
\lambda^{-1}N^{i}N^{(1)}$ \quad and
$$
y\lambda^{-1}D_{y}*N^{i-j}y^{j}\lambda^{-j}N^{(j)}=N^{i-j}y^{j+1}\lambda^{-(j+1)
}N^{(j+1)}
\; (\mathrm{ mod}\; J)
$$
we get from \eqref{lm:ind3} and the induction assumption that 
\begin{equation*}
\begin{split}
L_{i+1}*N=\,\,&
NL_{i}*N-y\lambda^{-1}D_{y}*(L_{i}*N)=\\
&N^{i+2}+\sum_{j=1}^{i}\binom{i+1}{j+1}(-1)^{j}N^{i+1-j}y^{j}\lambda^{-
j}N^{(j)}-(i+1)y\lambda^{-1}N^{i}N^{(1)}
-\\
&\sum_{j=1}^{i}\binom{i+1}{j+1}(-1)^{j}N^{i-j}y^{j+1}\lambda^{-(j+1)}N^{(j+1)}\;
(\mathrm{ mod}\; J).
\end{split}
\end{equation*}
          The usual properties of binomial coefficients now
accomplish the proof of the step of induction, which completes the proof of formula 
\eqref{lm:form1}.

The last statement in Lemma \ref{lem-6} 
follows trivially since the degree of the variable $\lambda^{-1}$ in
$\lambda^{-i}
y^{-i}\Theta (R_{i})$ (considered as an element of $B_{y,N}$) is
precisely $i-1$, see  (\ref{eq:k-1}), and the degree in $\la^{-1}$ of
the monomial  $(\la^{-1})^{\al_{1}}y^{\al_{2}}(\la^{-1}y)^{\vert I\vert} N^I$
  equals $\alpha_{1}+\vert I\vert$.
\end{proof}

\medskip\noindent
{\bf Description of the terms in equation (\ref{Dlog}).}
 Consider now the homogenized spectral pencil
         $T_{\lambda}=\sum_{i=0}^kQ_{i}(z)\lambda^{k-i}\frac{d^i}{dz^i}$.
             It acts (if we substitute $D^i_z$ for $\frac{d^i}{dz^i}$) on the differential module $A_{z,L}e^{M}$
	   and satisfies the obvious relation
            $$T_{\lambda}*e^{M}=\sum_{i=0}^kQ_{i}(z)\lambda^{k-i}R_{i}e^{M}.$$

Recall from Lemma \ref{firstformal} that each $R_i$ is a polynomial $R_i(\la,L^{(0)},\ldots)$. Now the non-linear equation 
	  $\sum_{i=0}^kQ_{i}(z)\la^{k-i}R_{i}=0$  in $L$  is the analog
over $A_{z,L}$
	  of the non-linear equation (\ref{Dlog}). A change of
variables
$y=z^{-1}$ and division by $\lambda^{k}$ transforms the previous equation into
the  equation
\begin{equation}
\label{eq:Dloginfty}
\lambda^{-k}S_{\lambda}(yN):=\Theta(\sum_{i=0}^kQ_{i}(z)\lambda^{-i}R_{i})=0.
\end{equation}

In what follows we  use the notation
\begin{multline*}
\Theta(Q_{i}(z))=\Theta(a_{i,i}z^i+a_{i,i-1}z^{i-1}+\ldots+a_{i,0})=\\
y^{-i}(a_{i,
i}+a_{i,i-1}y+\ldots+a_{i,0}y^i)=y^{-i}P_{i}(y),
\end{multline*}
             where $P_{i}(y)$ is a polynomial in $y$ of degree at most $i$.
             Equation \eqref{eq:Dloginfty} then takes the form
             	\begin{equation}
\label{eq:Dloginfty2}
\lambda^{-k}S_{\lambda}(yN):=\sum_{i=0}^kP_{i}(y)y^{-i}\lambda^{-i}\Theta(R_{i})
=0.
\end{equation}
Lemma~\ref{lem-6} leads to the following
statement.

\begin{Lemma}
	\label{transfeq}
In the above notation one has 
\begin{equation}
\lambda^{-k}S_{\lambda}(yN)= \sum_{i}P_i(y)N^{i}+
\sum_{i=0}^k\sum_{j=1}^{i-1}P_i(y)\binom{i}{j+1}(-1)^{j}N^{i-1-j}y^{j}
\lambda^{-j}N^{(j)} +b
\label{eq:start}
\end{equation}
for some $b\in J$.
        The exponents of all non-zero monomials
$(\lambda^{-1})^{\alpha_{1}}y^{\alpha_{2}}(\lambda^{-1}y)^{\vert
I\vert}N^I$ occurring in
the right-hand side
satisfy the condition $\alpha_{1}+\vert I\vert\leq k-1$.
\end{Lemma}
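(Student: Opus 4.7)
The plan is to derive Lemma~\ref{transfeq} essentially as a direct consequence of Lemma~\ref{lem-6}, together with the fact that $J$ is an ideal of $B_0$ and that each $P_i(y)$ lies in $B_0$. We start from the identity \eqref{eq:Dloginfty2}, namely
$$
\lambda^{-k}S_{\lambda}(yN) \;=\; \sum_{i=0}^k P_i(y)\, y^{-i}\lambda^{-i}\Theta(R_i),
$$
and substitute into each summand the expression for $\lambda^{-i}y^{-i}\Theta(R_i)$ supplied by Lemma~\ref{lem-6}. This immediately yields the first two summands written in \eqref{eq:start}, plus a remainder term of the form $\sum_i P_i(y)\, h_i$ with $h_i\in J$.

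The first key step is to verify that this remainder lies in $J$. Since $J$ is a (two-sided, by commutativity) ideal of $B_0$ and each $P_i(y)\in\bC[y]\subset B_0$, we have $P_i(y)h_i\in J$, whence $b:=\sum_i P_i(y)h_i\in J$ as required. Thus the decomposition \eqref{eq:start} holds.

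The second key step is the uniform bound $\alpha_1+|I|\le k-1$ on the exponents of monomials occurring in the right-hand side of \eqref{eq:start}. For each fixed $i$, Lemma~\ref{lem-6} already guarantees that every monomial $(\lambda^{-1})^{\alpha_1}y^{\alpha_2}(\lambda^{-1}y)^{|I|}N^I$ appearing in $\lambda^{-i}y^{-i}\Theta(R_i)$ satisfies $\alpha_1+|I|\le i-1$. Multiplying by $P_i(y)\in\bC[y]$ alters only the $y$-exponent $\alpha_2$, not $\alpha_1$ or $|I|$. Since $i$ ranges over $\{0,1,\ldots,k\}$, we obtain the uniform bound $\alpha_1+|I|\le k-1$ for every nonzero monomial on the right-hand side, including those hidden inside $b\in J$.

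I do not expect any real obstacle here: the lemma is a bookkeeping consequence of the much more substantial Lemma~\ref{lem-6}. The only point that requires a small check is the stability of the ideal $J$ under multiplication by the polynomial coefficients $P_i(y)$, but this is immediate from the definition of $J$ as an ideal of $B_0$ together with the inclusion $\bC[y]\subset B_0$.
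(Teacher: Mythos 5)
Your proposal is correct and follows exactly the route the paper intends: the paper gives no separate proof, merely stating that Lemma~\ref{lem-6} ``leads to'' Lemma~\ref{transfeq}, and your write-up supplies precisely the omitted bookkeeping (substituting \eqref{eq:form} into \eqref{eq:Dloginfty2}, using that $P_i(y)\in\bC[y]\subset B_0$ so $P_i(y)h_i\in J$, and noting that multiplication by $P_i(y)$ leaves $\alpha_1+|I|$ unchanged so the bound $\alpha_1+|I|\le i-1\le k-1$ persists). No gaps.
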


\medskip\noindent
{\bf  Generalities on power series}. Lemma~\ref{transfeq} tells us all we need to know about the explicit form of the terms in equation (\ref{Dlog}).
The {\em second step} in solving (\ref{Dlog}) is to find a 
recurrence relation for the formal power
series solutions of (\ref{eq:Dloginfty2}) using (\ref{eq:start}).
To do this we will use the Ansatz $L(y)=yN(y)=\sum_{i=1}^\infty \eps_{i}y^{i}$.
In algebraic terms this  means that
             instead of an element of $B_{y,N}$ we consider its image under
the map of
             differential rings $B_{y,N}\to {\mathbf
C}[\eps_{1},\eps_{2},\ldots][[y]]$
             given by $N\mapsto \sum_{i=0}^{\infty}\eps_{i+1}y^{i}$. (The latter
             ring is equipped with the usual derivation
            $\frac{d}{dy}$; in particular, $\frac{d}{dy}*\eps_{i}=0$.)

We will repeatedly use some easily verified
algebraic properties of the derivatives of formal power series similar to
$N(y)$. These properties are summed up below.

\begin{not+}
If $A=\sum_{i=0}^{\infty}k_{i}y^{i}\in K[[y]]$ is a formal power series
with coefficients in
a field $K$ we  define $[A]_{m}:=k_{m}.$  The notation ``$EXP\;\mathrm{ mod}(
\eps_{1},\ldots,\eps_{m})$'' means that expression $EXP$ is taken
modulo the vector space generated by
all monomials in the indicated variables
$(\eps_{1},\ldots,\eps_{m})$. (Note that this usage is different from
the earlier  used ``$(\text{mod } J)$'', where $J$ is some ideal.)
\end{not+}

\begin{Lemma} Let $N(y)=\sum_{i=0}^{\infty}\eps_{i+1}y^{i}\in {\mathbf
            C}[\eps_{1},\eps_{2},\ldots][[y]]$. Then the following relations
            hold:
           \label{powerseries}

            \begin{enumerate}
	 \item $[y^{i}N^{(i)}]_{m}=m(m-1)\ldots(m-i+1)\eps_{m+1}.$
	 If $0\leq m<i$, then $[y^iN^{(i)}]_{m}=0$.

	 \item One has
$$
[y^{\vert J\vert }N^{J}]_{m}=\sum_{c_{1}+\ldots+c_{s}=m}\prod_{i=1}^s
c_{i}(c_{i}-1)\ldots(c_{i}-j_{i}+1)\eps_{c_{i}+1}.
$$
(If $j_{i}=0$ then by abuse of notation we interpret
$c_{i}(c_{i}-1)\ldots (c_{i}-j_{i}+1)$ as $1$).
   If $J$ contains strictly more than one non-zero element then
   $[y^{\vert J\vert} N^{J}]_{m}= 0\; \mathrm{mod}
(\eps_{1},\ldots,\eps_{m})$.

\item
$[N^{j}]_{m}=j\eps_{1}^{j-1}\eps_{m+1}\; \mathrm{mod}
(\eps_{1},\ldots,\eps_{m})$ if $m\geq 1$ while $ [N^{j}]_{0}=\eps_{1}^{j}$.

\item
$[y^{i }N^{j}N^{(i)}]_{m}=m(m-1)\ldots(m-i+1)\eps_{1}^{j}\eps_{m+1}\;
\mathrm{mod}
(\eps_{1},\ldots,\eps_{m})$, if $i\geq 1$.

	 \item If the monomial
$M=(\lambda^{-1})^{\alpha_{1}}y^{\alpha_{2}}(\lambda^{-1}y)^{\vert
I\vert}N^I$ belongs to $B_{0}$ then
$$
[M]_{m}=B(\eps_{1},\ldots,\eps_{m+1},\lambda^{-1},\lambda^{-1}m),
$$
where $B(\eps_{1},\ldots,\eps_{m+1},x,y)$ is a polynomial of degree at most
$\vert
I\vert$ in $y$ and at most $\alpha_{1}$ in $x$.
\end{enumerate}
\end{Lemma}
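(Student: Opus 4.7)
The plan is to derive each of the five parts by direct computation from $N(y) = \sum_{k \ge 0} \eps_{k+1} y^k$ and its formal derivatives $N^{(i)}(y) = \sum_{k \ge i} k(k-1)\cdots(k-i+1) \eps_{k+1} y^{k-i}$, combined with the Cauchy product formula and careful bookkeeping modulo the vector space of monomials in $(\eps_1, \ldots, \eps_m)$. Part (1) is immediate: multiplying by $y^i$ yields $y^i N^{(i)}(y) = \sum_{m \ge i} m(m-1)\cdots(m-i+1) \eps_{m+1} y^m$, and for $0 \le m < i$ the falling factorial contains the factor $0$. Part (2) then follows from the Cauchy product $[y^{|I|} N^I]_m = \sum_{c_1 + \cdots + c_r = m} \prod_{s=1}^{r} [y^{i_s} N^{(i_s)}]_{c_s}$ and substitution of (1). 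For the second assertion of (2), observe that a summand fails to lie in $(\eps_1, \ldots, \eps_m)$ only if some $c_s + 1 > m$, forcing $c_s = m$ and the rest to $0$; but if two of the indices $i_s$ are positive, nonvanishing requires the two corresponding $c_s \ge 1$, which is incompatible with any single $c_s$ absorbing all of $m$.

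For (3) and (4) I would apply the same Cauchy product and modular analysis. In (3), the only monomials of $[N^j]_m = \sum_{c_1 + \cdots + c_j = m} \prod_{s=1}^j \eps_{c_s+1}$ that survive modulo $(\eps_1, \ldots, \eps_m)$ are those with a single $c_s = m$ and the rest zero, contributing $\eps_1^{j-1}\eps_{m+1}$ with multiplicity $j$. For (4), expand $[y^i N^j N^{(i)}]_m = \sum_{a+b=m} [N^j]_a \cdot [y^i N^{(i)}]_b$; a factor $\eps_k$ with $k > m$ can come either from the second factor (requiring $b = m$, hence $a = 0$) or from the first (requiring $a = m$, hence $b = 0$), and the latter is killed by $[y^i N^{(i)}]_0 = 0$ for $i \ge 1$. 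The surviving case $a = 0$, $b = m$ gives $[N^j]_0 \cdot m(m-1)\cdots(m-i+1)\eps_{m+1} = m(m-1)\cdots(m-i+1)\, \eps_1^j\, \eps_{m+1}$.

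Part (5) is essentially a degree-bookkeeping exercise. Write $[M]_m = \lambda^{-(\alpha_1+|I|)} [y^{\alpha_2+|I|} N^I]_m = \lambda^{-(\alpha_1+|I|)} [y^{|I|} N^I]_{m-\alpha_2}$, and use (2) to see that the last bracket is a polynomial $\tilde B$ in $\eps_1, \ldots, \eps_{m+1}$ (every index appearing is $c_s + 1 \le m - \alpha_2 + 1 \le m + 1$). Split the $\lambda$-power as $\lambda^{-(\alpha_1+|I|)} = \lambda^{-\alpha_1} (\lambda^{-1} m)^{|I|}/m^{|I|}$ when $m \ne 0$, and set $B(\eps, x, y) := x^{\alpha_1} y^{|I|} \tilde B / m^{|I|}$; this polynomial has degree $\alpha_1$ in $x$ and $|I|$ in $y$ and evaluates to $[M]_m$ at $(x, y) = (\lambda^{-1}, \lambda^{-1} m)$, yielding the claim. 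The degenerate case $m = 0$, $|I| \ge 1$ is handled by observing that $\tilde B$ then vanishes (any factor $c_s(c_s-1)\cdots(c_s - i_s + 1)$ with $i_s \ge 1$ evaluates to $0$ at $c_s = 0$), and the case $|I| = 0$ requires no $y$-factor at all.

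No step presents a genuine obstacle; the proof is a sequence of Cauchy products, substitutions, and vanishing arguments modulo $(\eps_1, \ldots, \eps_m)$. The main care is required in part (4), to rule out spurious contributions from $a = m$, $b = 0$, and in part (5), to organize the $\lambda$-factors so as to exhibit the required $(\lambda^{-1} m)^{|I|}$ dependence with the correct degree bounds.
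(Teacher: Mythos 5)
Parts (1)--(4) of your argument are correct and are exactly what the paper intends (the paper writes out only (2) and (5), leaving the rest as an exercise); your Cauchy-product bookkeeping and the observation that, modulo $(\eps_1,\ldots,\eps_m)$, only the terms with a single $c_s=m$ and the remaining $c_l=0$ survive, is the same mechanism used in the paper's proof of (2).

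Part (5), however, has a genuine defect. You manufacture $B$ by writing $\lambda^{-(\alpha_1+|I|)}=\lambda^{-\alpha_1}(\lambda^{-1}m)^{|I|}/m^{|I|}$ and absorbing the factor $m^{-|I|}$ into the coefficients. The resulting ``polynomial'' has coefficients depending explicitly on $m$, which renders the degree bound in $y$ vacuous: by the same token one could take $B$ to be the constant $[M]_m$ and claim degree $0$ in both $x$ and $y$. The statement only has content --- and is only usable in Lemma~\ref{powerseriesJ}, Lemma~\ref{rec.rel} and Proposition~\ref{prop:P_{0}}, where one lets $m\to\infty$ with $\lambda^{-1}m$ controlled and treats $B$, $E$ and $\Phi_0$ as \emph{fixed} polynomials evaluated at a moving point --- if the whole $m$-dependence of $[M]_m$ is channelled through the arguments $\eps_{m+1}$ and $\lambda^{-1}m$, with coefficients that do not depend on $m$. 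Your construction does not establish this. The paper's route is to distribute the powers of $\lambda^{-1}$ into the falling factorials: by part (1), $[(\lambda^{-1}y)^iN^{(i)}]_m=\lambda^{-i}m(m-1)\cdots(m-i+1)\eps_{m+1}=(\lambda^{-1}m)(\lambda^{-1}m-\lambda^{-1})\cdots\bigl(\lambda^{-1}m-(i-1)\lambda^{-1}\bigr)\eps_{m+1}$, which is manifestly a polynomial in $(\lambda^{-1},\lambda^{-1}m)$ of degree $i$ in the second variable with universal integer coefficients; for general $I$ one applies this to each factor $[(\lambda^{-1}y)^{j_s}N^{(j_s)}]_{c_s}$ of the Cauchy product (writing $\lambda^{-1}c_s=\lambda^{-1}m-\sum_{l\neq s}c_l\lambda^{-1}$ for the composition at hand) and sums over compositions. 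This is what yields the degree bounds with $m$-independent coefficients, which is the form of the statement that the subsequent asymptotic arguments actually consume.
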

\begin{proof} We prove properties  (2) and (5) leaving the rest as an
exercise for the interested  reader.  Note first that
        $$
        [y^{\vert J\vert }N^{J}]_{m}  =      \sum_{c_{1}+\ldots+c_{s}=m}\prod_{i=1}^s[y^{j_{i}}N^{j_{i}}]_{c_{i}}.
        $$
         By property (1), this gives the required relation in (2). 
Observe then that
the only case when one has
$$ \prod_{i=1}^s
c_{i}(c_{i}-1)\ldots(c_{i}-j_{i}+1)\eps_{c_{i}+1}\neq  0\; \mathrm{mod}
(\eps_{1},..\eps_{m}),
$$
is if some $\eps_{c_{i}+1}=\eps_{m+1}$, and then $c_{i}=m$, and
all other
$c_{l}=0,\ l\neq i$. But if for some such $l\neq i$ we have $j_{l}>0$, then
$c_{l}(c_{l}-1)\ldots(c_{l}-j_{l}+1)=0$; hence at most one $j_{i}\neq
0,\ 1\leq i\leq s$. This finishes the proof of (2).

In order to prove (5) notice that (1) implies that the expression
\begin{multline*}
A_{i}(\eps_{m+1},\la^{-1},\la^{-1}m):=[(\la^{-1}y)^iN^{(i)}]_{m}=\la^{-i}
m(m-1)\ldots(m-i+1)\eps_{m+1}=\\
(\la^{-1}m)(\la^{-1}m-\la^{-1})\ldots
(\la^{-1}m-(i-1)\la^{-1})\eps_{m+1}
\end{multline*}
is a polynomial 
whose degree in the last variable equals $i$. Therefore, the expression
$$A_{i}(\eps_{1},\ldots,\eps_{m+1},\la^{-1},\la^{-1}m):=[(\la^{-1}y)^{\vert J\vert} N^{J}]_{m}=\sum_{c_{1}+\ldots+c_{s}=m}
\prod_{i=1}^s[(\la^{-1}y)^{j_{i}}N^{j_{i}}]_{c_{i}}
        $$
is a polynomial 
whose degree in the last variable is at most $\vert J\vert$. This
implies (5).
\end{proof}

The following description of the coefficients of the power series in
the ideal $J$ is an immediate consequence of Lemma \ref{powerseries}.

\begin{Lemma}
           \label{powerseriesJ} Let
           $M=(\lambda^{-1})^{\alpha_{1}}y^{\alpha_{2}}(\lambda^{-1}y)^{\vert
I\vert}N^I$,
where $\alpha_{i}\in {\bZ_{\geq 0}},\ i=1,2$, $I\in FS$, be a
monomial in $B_{0}$. Then
\begin{enumerate}
         \item  $[M]_{0}\neq 0$ if and only if $M=(\lambda^{-1})^{\alpha_{1}}N^{j}$ for
           some $j$. If in addition $M\in J$ then $\alpha_1\geq 1$.

           \item If $M\in J$ and $m\ge 1$ then $[M]_{m}\neq 0 \; \mathrm{mod}
(\eps_{1},\ldots,\eps_{m})$ implies that:
\begin{itemize}
\item[(a)] $\alpha_{1}\geq 1$,
\item[(b)] $\alpha_{2}=0$,
\item[(c)] $I=(0,\ldots,0,i)$ with $lng(I)=j+1$ and contains at
most one non-zero element $i$.
\end{itemize}
If $i\ge 1$ then
$M=(\lambda^{-1})^{\alpha_{1}}(\lambda^{-1}y)^{i}N^{j}N^{(i)}$ and
$$
[M]_{m}=(\lambda^{-1})^{\alpha_{1}+i}m(m-1)\ldots(m-i+1)\eps_{1}^{j}\eps_{m+1}\;
\mathrm{mod}
(\eps_{1},\ldots,\eps_{m}).
$$

\item If $b\in J$ then
$[b]_{m}=\lambda^{-1}E(\eps_{1}, \lambda^{-1},\lambda^{-1}m)\eps_{m+1}\;
\mathrm{mod}
(\eps_{1},\ldots,\eps_{m})$, where $E$ is
a polynomial.
        \end{enumerate}
           \end{Lemma}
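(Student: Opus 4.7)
The plan is to read off the three assertions directly from Lemma~\ref{powerseries} by case analysis on the shape of $M$, tracking which contributions survive the reduction modulo $(\eps_{1},\ldots,\eps_{m})$ and which are killed by membership in $J$.

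For part~(1) I will argue that $[M]_{0}\neq 0$ forces the full $y$-valuation of $M$ to vanish. By Lemma~\ref{powerseries}(1), each factor $y^{i_{s}}N^{(i_{s})}$ of $y^{\vert I\vert}N^{I}$ contributes nothing to the constant term whenever $i_{s}\ge 1$; so all entries of $I$ must equal $0$ and, in addition, $\alpha_{2}=0$. Then $N^{I}=N^{j}$ for some $j$, hence $M=(\lambda^{-1})^{\alpha_{1}}N^{j}$, and the constant term equals $(\lambda^{-1})^{\alpha_{1}}\eps_{1}^{j}$ by Lemma~\ref{powerseries}(3) at $m=0$. Since $lng(I_{+})=0<2$, membership in $J$ forces $\alpha_{1}\ge 1$.

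For part~(2) the central point is Lemma~\ref{powerseries}(2): if $I$ contains at least two non-zero entries, then $[y^{\vert I\vert}N^{I}]_{m}\equiv 0\ \mathrm{mod}(\eps_{1},\ldots,\eps_{m})$, hence $[M]_{m}$ is zero modulo this ideal. So $I$ has at most one non-zero element $i$, giving $I=(0,\ldots,0,i)$ of length $j+1$. Next I will use the shift relation $[y^{\alpha_{2}}g]_{m}=[g]_{m-\alpha_{2}}$ together with Lemma~\ref{powerseries}(4): for $i\ge 1$ the leading surviving term of $[y^{i}N^{j}N^{(i)}]_{m-\alpha_{2}}$ is a multiple of $\eps_{m-\alpha_{2}+1}$. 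If $\alpha_{2}\ge 1$ this latter variable lies in $(\eps_{1},\ldots,\eps_{m})$, and the same argument using Lemma~\ref{powerseries}(3) handles the case $i=0$. Hence $\alpha_{2}=0$. Assertion~(a) then follows from $M\in J$ combined with $lng(I_{+})\le 1$, forcing $\alpha_{1}\ge 1$. The explicit formula for $[M]_{m}$ is now read off from Lemma~\ref{powerseries}(4) after pulling out the $(\lambda^{-1})^{\alpha_{1}+i}$ prefactor.

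Part~(3) will be obtained by linearity. Any $b\in J$ is a finite $\bC$-linear combination of monomials $M$ of the form considered in part~(2), and for each such summand the previous step gives
$$[M]_{m}=(\lambda^{-1})^{\alpha_{1}+i}\,m(m-1)\cdots(m-i+1)\,\eps_{1}^{j}\,\eps_{m+1}\ \mathrm{mod}(\eps_{1},\ldots,\eps_{m}).$$
Rewriting the product $\lambda^{-i}m(m-1)\cdots(m-i+1)=(\lambda^{-1}m)(\lambda^{-1}m-\lambda^{-1})\cdots(\lambda^{-1}m-(i-1)\lambda^{-1})$ displays $[M]_{m}$ as $\lambda^{-1}\eps_{m+1}$ times a polynomial in $\eps_{1}$, $\lambda^{-1}$ and $\lambda^{-1}m$ (using $\alpha_{1}\ge 1$ to factor out one copy of $\lambda^{-1}$). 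Summing over the finitely many monomials produced by $b$ gives the desired polynomial $E$.

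The proof is essentially a matter of bookkeeping; the only mildly delicate point I anticipate is verifying that the case $\alpha_{2}>0$ is genuinely killed modulo $(\eps_{1},\ldots,\eps_{m})$ uniformly across $i\ge 0$, and that the extraction of a single $\lambda^{-1}$ factor for part~(3) always succeeds — both following cleanly once the shift identity and membership in $J$ have been combined.
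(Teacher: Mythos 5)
Your proposal is correct, and it follows essentially the same route as the paper, which simply declares this lemma ``an immediate consequence of Lemma~\ref{powerseries}'' and omits the bookkeeping; your case analysis (shift by $y^{\alpha_2}$, killing multi-indices with two nonzero entries via Lemma~\ref{powerseries}(2), extracting a factor $\lambda^{-1}$ from $\alpha_1\ge 1$, and summing over the basis monomials of $J$) is exactly that omitted bookkeeping.
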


Now we have all the tools needed to get an adequate  picture of the
recurrence relation
corresponding to equation (\ref{Dlog}). We use  the Ansatz
$N(y)=\sum_{i=0}^{\infty}\eps_{i+1}y^{i}$ in equation
(\ref{eq:Dloginfty2}).

\medskip
\noindent
{\bf The constant term.}
The first step in solving our recurrence relation is to get an equation
for $\eps_{1}$.

\begin{Lemma} Let $N(y)=\sum_{i=0}^{\infty}\eps_{i+1}y^{i}$.
      The values of $\la$ for which $\lambda^{-k}S_{\lambda}(yN)$ has no
      constant term are precisely the solutions of
          $R(\eps_{1},\lambda)=0$. Here $R(\eps_{1},\lambda)$ is given
          by
           \label{recdeg0}
           \begin{equation}
	R(\eps_{1},\lambda):=[\lambda^{-k}S_{\lambda}(yN)]_{0}=\sum_{i=0}^{k}a_{ii}\eps_{1}^{i}+
	\lambda^{-1}E(\eps_{1},\lambda^{-1})
	\label{eq:recdeg0}
	\end{equation}
	for some $E(\eps_{1},\lambda^{-1})$ which is a polynomial of degree at
	most $k$ in $\eps_{1}$ (and it has degree at most $k-2$ in $\lambda^{-1}$).
           \end{Lemma}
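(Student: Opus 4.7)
\medskip\noindent
\textbf{Proof plan for Lemma \ref{recdeg0}.} The strategy is simply to read off the coefficient of $y^{0}$ on the right-hand side of the expression for $\lambda^{-k}S_{\lambda}(yN)$ provided by Lemma \ref{transfeq}, using the formulas collected in Lemmas \ref{powerseries} and \ref{powerseriesJ}. Setting this coefficient equal to zero is, by definition, the statement ``$\lambda^{-k}S_{\lambda}(yN)$ has no constant term'', so the first assertion of the lemma is tautological once the explicit form of $R(\eps_1,\lambda)$ is established; the real content is computing this form and bounding the degrees.

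First I will treat the three pieces on the right-hand side of \eqref{eq:start} separately. For the leading piece $\sum_{i=0}^k P_i(y)N^i$, I use $P_i(0)=a_{i,i}$ (from the definition of $P_i$) and $[N^i]_0=\eps_1^i$ to obtain the contribution $\sum_{i=0}^k a_{i,i}\eps_1^i$, giving the ``main term'' of $R$. For the double sum, each summand contains a factor $y^j N^{(j)}$ with $j\ge 1$, and by Lemma \ref{powerseries}(1) we have $[y^jN^{(j)}]_0=0$ whenever $j\ge 1$; multiplying by $P_i(y)$ and $N^{i-1-j}$ and taking the coefficient of $y^0$ still yields $0$, so the entire double sum contributes nothing to $[\cdot]_0$. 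Finally, for the $J$-remainder $b$, Lemma \ref{powerseriesJ}(1) tells us that the only monomials in $B_0$ with nonzero $y^0$-coefficient are of the shape $(\lambda^{-1})^{\alpha_1}N^j$, and that membership in $J$ forces $\alpha_1\ge 1$. Hence $[b]_0$ is a sum of terms $c_{\alpha_1,j}\lambda^{-\alpha_1}\eps_1^{\,j}$ with $\alpha_1\ge 1$, which is already of the form $\lambda^{-1}E(\eps_1,\lambda^{-1})$.

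It remains to verify the degree bounds on $E$. The bound on the $\lambda^{-1}$-degree comes from the last statement of Lemma \ref{transfeq}: for any monomial $(\lambda^{-1})^{\alpha_1}y^{\alpha_2}(\lambda^{-1}y)^{|I|}N^I$ appearing in the right-hand side of \eqref{eq:start} one has $\alpha_1+|I|\le k-1$, and since the surviving monomials in $[b]_0$ have $|I|=0$ (the index sequence consists only of zeros), this forces $\alpha_1\le k-1$. Thus $\lambda^{-1}E(\eps_1,\lambda^{-1})$ is a polynomial in $\lambda^{-1}$ of degree at most $k-1$, i.e., $E$ has degree at most $k-2$ in $\lambda^{-1}$. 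For the $\eps_1$-degree, I note that each $R_i$ has total degree $i$ in the variables $L,L',\ldots$ by Lemma \ref{firstformal} (since $R_i=(\lambda L+D_z)^{i-1}*\lambda L$), so every monomial occurring in $\lambda^{-k}S_\lambda(yN)$ has total $N$-length $\le k$; after taking the $y^0$-coefficient, this translates into a bound $j\le k$ on the power of $\eps_1$.

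The only part requiring genuine care is keeping the degree bookkeeping consistent between Lemmas \ref{transfeq}, \ref{powerseries}, and \ref{powerseriesJ}; the actual computation is a direct unwinding of definitions, with no estimates and no recursion involved at this stage (the recursion will enter in the next step of the paper when one looks at $[\,\cdot\,]_m$ for $m\ge 1$).
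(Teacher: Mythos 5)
Your proposal is correct and follows essentially the same route as the paper's proof: extract the $y^0$-coefficient from the decomposition in Lemma \ref{transfeq}, kill the double sum via $[y^jN^{(j)}]_0=0$ for $j\ge 1$, use Lemma \ref{powerseriesJ} to see that $[b]_0$ is divisible by $\lambda^{-1}$, and bound the degrees by tracing the monomials back to the $\Theta(R_i)$ with $i\le k$. The only cosmetic difference is that you invoke the final statement of Lemma \ref{transfeq} for the $\lambda^{-1}$-degree bound where the paper re-derives it directly from \eqref{eq:theta}; both are equivalent.
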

\begin{proof}

Part 1 of Lemma~\ref{powerseriesJ} and part 3 of Lemma~\ref{powerseries}
imply that
$[P_i(y)N^{i}]_{0}=a_{ii}\eps_{1}^{i}$ and
$[P_i(y)N^{i-1-j}y^{j}\lambda^{-j}N^{(
j)}]_{0}=0$ for all $j\ge 1$.
Hence by Lemma \ref{transfeq} 
the constant term of $\lambda^{-k}S_{\lambda}(yN)$ is
given by
$$
\sum_{i}a_{ii}\eps_{1}^{i} +[b]_{0}.
$$
By  Lemma \ref{powerseriesJ} part 3, there is a polynomial $E$ such that $[b]_0=\lambda^{-1}E(\eps_{1},\lambda^{-1})$. Part 1 of this same lemma gives that the only
terms in $b$
 contributing a non-zero term of degree $j$ in
$\eps_{1}$ to $[b]_0$  come from terms of the form
$c(\lambda^{-1})^{\alpha_{1}}N^{j}$ with $\alpha_{1}\geq 1$ and
$c\in {\bC}$. It is clear that such
a term will have $j\leq k$  since by (\ref{eq:Dloginfty})
and (\ref{eq:theta}) it stems from some
$\lambda^{-i} y^{-i}\Theta (R_{i})=y^{-i}(yN-y^2\lambda^{-1}D_{y})^{i} *1$ 
with $i\leq k$,  which contains $N$  at most in the power $i$. By the same observation it follows that the degree in $\lambda^{-1}$ of such a term is less than or equal to $k-1$.
\end{proof}

Hence  the initial step in  computing
the formal solution to (\ref{eq:Dloginfty}) is solving the  equation
\begin{equation}
	R(\eps_{1},\lambda)=\sum_{i=0}^{k}a_{ii}\eps_1^{i}+
	\lambda^{-1}E(\eps_{1},\lambda^{-1})=0.
	\label{eq:for a1}
	\end{equation}
Note that this equation tends
             to the equation 
             $
\sum_{i=0}^k a_{ii}\eps_{1}^{i}=0
$
as $\lambda\to  \infty$ 
and that the latter coincides with equation (\ref{lambdadef}).
In particular, under the assumptions of Theorem \ref{th:main}, or
equivalently, of  Lemma \ref{lm:gen}, equation (\ref{eq:for a1})
has $k$ distinct solutions for any sufficiently large value of $\lambda$.
Choose one of the branches $\eps_{1}=\eps_{1}(\lambda)$ that  solves
(\ref{eq:for a1}) in a neighborhood of $\lambda=\infty$ in $\bCP$.
This means that we have determined $\eps_{1}$  for any large enough value
of $\lambda$ and we can start determining the other coefficients of
$N(y)=\sum_{i=0}^{\infty}\eps_{i+1}y^{i}$ in terms of the chosen $\eps_{1}$.

\medskip
\noindent
{\bf The recurrence formula.}
Assume now 
that the coefficients
$\eps_{1},\ldots,\eps_{m}$ of $N$ have already been defined. The recursive
step that defines $\eps_{m+1}$ then corresponds to solving the equation
$[\lambda^{-k}S_{\lambda}(yN)]_{m}=0$.

\begin{Lemma}
           \label{rec.rel} One has
$$[\lambda^{-k}S_{\lambda}(yN(y))]_{m}=\Phi_{0}(\eps_{1},\lambda^{-1},\lambda^{-1
}m)\eps_{m+1}+
B(\eps_{1},\ldots,\eps_{m},\lambda^{-1},\lambda^{-1}m),$$
where $B$  and $\Phi_{0}$ are polynomials and the latter satisfies 
\begin{equation}
\Phi_{0}(\eps_{1},\lambda^{-1},\lambda^{-1}m)=-\sum_{i=0}^ka_{ii}\frac{(\eps_{1}
-\lambda^{-1}m)^{i}-\eps_{1}^{i}}{\lambda^{-1}m}+
           \lambda^{-1}E(\eps_{1},\lambda^{-1},\lambda^{-1}m),
           \label{eq:Phi}
\end{equation}
where $E$  is a polynomial whose degree in the third variable (which is
$\lambda^{-1}m$) is strictly
less than $k-2$. The
degree of $B$ in the last variable (which is $\lambda^{-1}m$ as well)
is less than or
equal to $k-1$.
          \end{Lemma}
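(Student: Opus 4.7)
The starting point is the explicit formula for $\lambda^{-k}S_\lambda(yN)$ provided by Lemma~\ref{transfeq}, which splits the expression into (i) the ``main'' polynomial part $\sum_i P_i(y)N^i$, (ii) the ``derivative'' part $\sum_i\sum_j P_i(y)\binom{i}{j+1}(-1)^j N^{i-1-j}y^j\lambda^{-j}N^{(j)}$, and (iii) a remainder $b\in J$. The plan is to extract $[\,\cdot\,]_m$ from each of the three pieces separately, collect the coefficient of $\eps_{m+1}$ to get $\Phi_0$, and put everything else into $B$.

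For piece (i), only the constant term $a_{ii}$ of $P_i(y)$ can produce an $\eps_{m+1}$ contribution (any positive power of $y$ shifts the index), and part (3) of Lemma~\ref{powerseries} gives $[N^i]_m = i\,\eps_1^{i-1}\eps_{m+1}$ modulo $(\eps_1,\ldots,\eps_m)$. For piece (ii), the same argument together with part (4) of Lemma~\ref{powerseries} yields
\[
[P_i(y)N^{i-1-j}y^j\lambda^{-j}N^{(j)}]_m \equiv a_{ii}\binom{i}{j+1}(-1)^j\lambda^{-j}m(m-1)\cdots(m-j+1)\,\eps_1^{i-1-j}\eps_{m+1}
\]
modulo $(\eps_1,\ldots,\eps_m)$. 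For piece (iii), part (3) of Lemma~\ref{powerseriesJ} directly provides the contribution $\lambda^{-1}E(\eps_1,\lambda^{-1},\lambda^{-1}m)\eps_{m+1}$ modulo $(\eps_1,\ldots,\eps_m)$. Everything from the residue modulo $(\eps_1,\ldots,\eps_m)$ in the three pieces is lumped into $B$.

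The algebraic heart of the argument is then to recognize that the sum of the contributions from (i) and the leading (in powers of $m$) part of (ii) matches the stated closed form for $\Phi_0$. Introducing $u := \lambda^{-1}m$ and replacing $m(m-1)\cdots(m-j+1)$ by $m^j$ modulo $\lambda$-corrections (which absorb into the $\lambda^{-1}E$ term), the expansion
\[
-\frac{(\eps_1-u)^i-\eps_1^i}{u}=\sum_{l=1}^{i}\binom{i}{l}(-1)^{l+1}\eps_1^{i-l}u^{l-1}
\]
isolates the $l=1$ term as $i\eps_1^{i-1}$ (matching piece (i)), while the $l\geq 2$ terms, after the index change $j=l-1$, match precisely the leading-in-$m$ part of piece (ii). This is the key identity and where I expect the bookkeeping to be most delicate: carefully tracking that the differences $m(m-1)\cdots(m-j+1)-m^j$ contribute only terms with an extra factor of $\lambda^{-1}$, thereby justifying their absorption into $\lambda^{-1}E$.

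Finally, for the degree bounds, I would invoke part (5) of Lemma~\ref{powerseries} combined with the universal bound $\alpha_1+|I|\leq k-1$ from Lemma~\ref{transfeq}. This gives that every monomial $(\lambda^{-1})^{\alpha_1}y^{\alpha_2}(\lambda^{-1}y)^{|I|}N^I$ appearing in $\lambda^{-k}S_\lambda(yN)$ contributes to $[\,\cdot\,]_m$ a polynomial in $u=\lambda^{-1}m$ of degree at most $|I|\leq k-1-\alpha_1$. Hence $B$ has degree at most $k-1$ in $u$, and the $\lambda^{-1}E$ correction inside $\Phi_0$ (which necessarily has $\alpha_1\geq 1$ to produce the overall $\lambda^{-1}$ prefactor) has degree at most $k-2$ in $u$. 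This yields the bounds asserted in the lemma.
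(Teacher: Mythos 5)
Your proposal follows essentially the same route as the paper's own proof: the same three-way split coming from Lemma~\ref{transfeq}, the same coefficient extractions via parts (3)--(4) of Lemma~\ref{powerseries} and part (3) of Lemma~\ref{powerseriesJ}, the same binomial identity recombining pieces (i) and (ii) into $-\sum_i a_{ii}\bigl((\eps_1-u)^i-\eps_1^i\bigr)/u$ with $u=\la^{-1}m$ after replacing $m(m-1)\cdots(m-j+1)$ by $m^j$ modulo $\la^{-1}$-corrections, and the same degree bookkeeping from the bound $\al_1+\vert I\vert\le k-1$. The only discrepancy is that you obtain degree at most $k-2$ in $\la^{-1}m$ for $E$ rather than the stated ``strictly less than $k-2$''; the paper's own second-order example (where $E=-a_{22}$ has degree $0=k-2$) shows your bound is the sharp one, and it suffices for all later uses.
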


\begin{proof} By equation (\ref{eq:start}) one has 
           \begin{multline}
[\lambda^{-k}S_{\lambda}(yN)]_{m}=
\sum_{i}[P_iN^{i}]_{m}+\\
\sum_{i=0}^k\sum_{j=1}^{i-1}\binom{i}{j+1}(-1)^{j}
[P_iN^{i-1-j}y^{j}\lambda^{-j}N^{(j)}]_{m} +[b]_{m}.
\label{eq:10}
\end{multline}
By Lemma \ref{powerseries} the following two relations hold
$\mathrm{mod}(\eps_{1},\ldots,\eps_{m})$:
$$
[P_iN^{i}]_{m}=a_{ii}i\eps_{1}^{i-1}\eps_{m+1}=a_{ii}\binom{i}{1}\eps_{1}^{i-1}\eps_{m+1}
$$
and
$$
[P_iN^{i-1-j}y^{j}\lambda^{-j}N^{(j)}]_{m}=a_{ii}\lambda^{-j}m(m-1)\ldots(m-j+1)
\eps_{1}^{i-1-j}\eps_{m+1}.
$$
Finally, by Lemma \ref{powerseriesJ} one has
$[b]_{m}=\lambda^{-1}E_{2}(\eps_{1},\lambda^{-1},\lambda^{-1}m)\eps_{m+1}$,
where $E_{2}$ is a
polynomial.
The identity
$\lambda^{-j}m(m-1)\ldots(m-j+1)=\lambda^{-j}m^{j}+\lambda^{-1}F_{i}(\lambda^{-1
},\lambda^{-1}m)$, where
$F_{i}$ is a
polynomial,  implies that $\mathrm{mod}(\eps_{1},\ldots,\eps_{m})$ one further has
\begin{multline*}
[\lambda^{-k}S_{\lambda}(yN)]_{m}=
(\sum_{i=0}^ka_{ii}\sum_{j=0}^{i-1}\binom{i}{j+1}(-1)^{j}\eps_{1}^{i-1-j}
\lambda^{-j}m^{j}+\\
\lambda^{-1}E(\eps_{1},\lambda^{-1},\lambda^{-1}m))\eps_{m+1}.
\end{multline*}
The sum over $j$ in the latter expression  is $-\lambda
m^{-1}((\eps_{1}-\lambda^{-1}m)^{i}-\eps_{1}^{i})$. This completes the
proof of the formula for $\Phi_{0}$. The bounds for the degrees of the
involved polynomials stated in the lemma 
follow directly from the last statement of Lemma \ref{transfeq} and  part (5)
of Lemma \ref{powerseries}.
\end{proof}

\begin{rema+}
Once we prove -- which we will do in the next section -- that there is an open set $D$ containing $z=\infty$ where 
$\Phi_{0}(\eps_{1},\lambda^{-1},\lambda^{-1}m)\neq 0$ for all $m$ and $\lambda\in \Omega$ then by Lemma~\ref{rec.rel} we immediately get a recursive determination of $\eps_{m+1}(\lambda)$. Indeed, the initial data is the choice of $\eps_1(\lambda)$. In (\ref{eq:for a1}) we saw that $\lim_{\lambda \to\infty}\eps_1(\lambda)$ is a root of (\ref{lambdadef}) so in particular it is bounded. Since in the right-hand side of (\ref{eq:10}) the only part not being a multiple of  $\lambda^{-1}$ is $\sum_{i}[P_iN^{i}]_{m}$, we get that $yN_\lambda(y)=\sum_{i=1}^\infty \eps_{i}(\lambda)y^{i}$ converges formally to a formal power series $yN(y)=\sum_{i=1}^\infty \eps_{i}y^{i}$ which is a formal solution to the equation $\sum_{i}P_iN^{i}=0$. This shows that $yN(y)$ is a formal solution (at infinity) to the equation of the plane curve associated with the pencil $T_\lambda$.
\label{Rmk:formal}

\end{rema+}
\medskip\noindent
{\bf Summary.} We have considered the Ansatz $$L(z)=yN(y)=\sum_{i=1}^\infty \eps_{i}y^{i}$$
for equation (\ref{Dlog}) at $z=\infty$ in the form given by the change of variable $y=1/z$ (cf.~(\ref{eq:Dloginfty})). We saw by Lemma \ref{recdeg0} that $\eps_{1}=\eps_{1}(\lambda) $ has to be a solution to equation $(\ref{eq:for a1})$ and that it is always possible to find such a solution for large $\lambda$ under the assumption of Theorem \ref{th:main}. Furthermore, 
Lemma \ref{rec.rel} is immediately reformulated into the recurrence relation 
\begin{equation}
\eps_{m+1}=-\Phi_{0}(\eps_{1},\lambda^{-1},\lambda^{-1}m)^{-1}B(\eps_{1},\ldots,
\eps_{m},\lambda^{-1},\lambda^{-1}m),
           \label{eq:rec.rel}
\end{equation}
which  formally solves equation (\ref{eq:Dloginfty}) under the
assumption that  for all $m=1,2,\ldots$ one has
$\Phi_{0}(\eps_{1},\lambda^{-1},\lambda^{-1}m)\neq
0$. Clearly, the next step is to study this last condition. This will be done in the next section.

\begin{Ex}
Consider
$T_{\la}=\sum_{i=0}^2Q_{i}(z)\la^{2-i}\frac{d^i}{dz^i}$ with $Q_{i}(z)=\sum_{j=0}^{i}a_{ij}z^j$, $i=0,1,2$.
Then
$$\Phi_{0}(\eps_{1},\lambda^{-1},\lambda^{-1}m)=a_{22}(2\eps_{1}-\lambda^{-1}m)+
a_{11} -a_{22}\lambda^{-1}
$$
with $E=-a_{22}$.

The tricky point in proving that there actually {\em exists} a formal
solution to (\ref{Dlog})
is that the polynomials $\Phi_{0}(\eps_{1},\lambda^{-1},\lambda^{-1}m)$
might in fact vanish for some values of $m$. Moreover, proving the
{\em convergence} of formal
solutions is also rendered difficult by the possibility that
$\Phi_{0}(\eps_{1},\la^{-1},\la^{-1}m)\to
0 $ when $m\to \infty$.  We can see from Lemma~\ref{rec.rel} and the
   above example that the
terms in $\Phi_{0}(\eps_{1},\lambda^{-1},\lambda^{-1}m)$ have  degree 
in $m$ that is less than or equal
to their total degree in $\lambda^{-1}$.
This observation will be the crucial point in the proof
   of the convergence of formal power series solutions given in
the next section.
\end{Ex}

\section{Estimating the radius of convergence}

We use the classical
method of majorants (see e.g.~\cite{HL} or \cite{Hu}) to show  that under
certain conditions the formal power series solutions to equation 
(\ref{eq:Dloginfty}) actually represent analytic functions.
	The idea is to find an upper bound for the solutions to the
studied  recurrence relation
	 in terms of  a simpler and explicitly
	solvable one. In other words, we want to substitute the original
recurrence relation  by a
	simpler recurrence
ensuring that during this
	process the absolute value of the solutions will not  decrease.
	The recurrence relation (\ref{eq:rec.rel}) has,
	for fixed $\lambda$, the form
	\begin{equation}
	   \eps_{m+1}=G_{m}(\eps_{1},\ldots,\eps_{m}),
\label{eq:gen.rec.rel}
	\end{equation}
	where
	$G_{m}(\eps_{1},\ldots,\eps_{m})=\sum_{I}\de^{m}_{I}\eps^{I}$, 
$m=1,2,\ldots$, 
	is a family of polynomials.
\medskip
The following lemma is borrowed from \cite{HL}.

	\begin{Lemma}
		Given a recurrence relation
(\ref{eq:gen.rec.rel}) assume that
		$$
		H_{m}(\eps_{1},\ldots,\eps_{m})	=	\sum^{m}_{I}A_{I}\eps^{I},\quad m=1,2,\ldots,
		$$
	is a family of polynomials with positive coefficients satisfying
	\begin{equation}
	    \vert G_{m}(\eps_{1},\ldots,\eps_{m})\vert\leq H_{m}(\vert
\eps_{1}\vert
	,\ldots,\vert \eps_{m}\vert )
	 \label{eq:HL}
	\end{equation}
	for all $\eps_{i}\in \bC$.
(This is true, for example, if $A^{m}_{I}\geq \vert \de^{m}_{I}\vert )$.
Take
	$E_{1}\geq \vert \eps_{1}\vert$ and define inductively
	$E_{m+1}= H_{m}(E_{1},\ldots,E_{m})$.
	Then for any $m$ one has that $E_{m}\geq \vert \eps_{m}\vert$ and
hence
	the radius of convergence of the series $\sum_{i=1}^\infty
\eps_{i}t^i$ is greater than or equal to 
	the
	radius of convergence of the series $\sum_{i=1}^\infty
	E_{i}t^i$.
	\label{lm:HL}
	\end{Lemma}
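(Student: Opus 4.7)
The plan is to establish the pointwise bound $E_m \geq |\eps_m|$ for all $m \geq 1$ by induction on $m$, and then deduce the radius-of-convergence comparison from the standard coefficient comparison for power series with non-negative majorizing coefficients.

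The base case $m=1$ is exactly the hypothesis $E_1 \geq |\eps_1|$. For the inductive step, suppose $E_i \geq |\eps_i| \geq 0$ for $i = 1, \ldots, m$. By the recurrence and the majorization assumption \eqref{eq:HL}, we have
$$|\eps_{m+1}| \;=\; |G_m(\eps_1, \ldots, \eps_m)| \;\leq\; H_m(|\eps_1|, \ldots, |\eps_m|).$$
The crucial observation, which is really the whole content of the argument, is that because $H_m$ has \emph{non-negative} coefficients, the function $(x_1, \ldots, x_m) \mapsto H_m(x_1, \ldots, x_m)$ is monotone non-decreasing in each variable on the non-negative orthant. Applying this monotonicity to the inductive hypothesis $0 \leq |\eps_i| \leq E_i$ gives
$$H_m(|\eps_1|, \ldots, |\eps_m|) \;\leq\; H_m(E_1, \ldots, E_m) \;=\; E_{m+1}.$$
Chaining the two inequalities yields $|\eps_{m+1}| \leq E_{m+1}$, closing the induction.

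Once the termwise bound $|\eps_m| \leq E_m$ is in place, the radius of convergence comparison is immediate. If $R_E$ denotes the radius of convergence of $\sum_{i=1}^\infty E_i t^i$ and $|t| < R_E$, then $\sum_{i=1}^\infty E_i |t|^i < \infty$, so by direct domination $\sum_{i=1}^\infty |\eps_i| |t|^i \leq \sum_{i=1}^\infty E_i |t|^i < \infty$. Hence $\sum_{i=1}^\infty \eps_i t^i$ converges absolutely at every $t$ with $|t| < R_E$, which means its radius of convergence is at least $R_E$. The parenthetical remark in the lemma statement, that $A_I^m \geq |\de_I^m|$ is a \emph{sufficient} condition for \eqref{eq:HL}, is a trivial consequence of the triangle inequality applied monomial-by-monomial.

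There is no real obstacle here; the lemma is classical and its verification is essentially bookkeeping. The only point deserving any emphasis is that positivity of the coefficients of $H_m$ is used precisely to guarantee monotonicity on the positive reals, without which the inductive step would fail. This lemma is the abstract tool; the substantive work in the next section will be to exhibit explicit majorants $H_m$ for the recursion \eqref{eq:rec.rel} whose associated series $\sum E_i t^i$ has a positive radius of convergence, thereby converting the formal solution constructed in \S \ref{sec:3} into a genuine analytic function near $y = 0$.
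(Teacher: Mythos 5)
Your proof is correct and is exactly the standard majorant argument: induction on $m$ using the monotonicity of $H_m$ on the non-negative orthant (which is where positivity of the coefficients enters), followed by termwise domination of the series. The paper itself gives no proof of this lemma, simply borrowing it from the classical reference of Hensel--Landsberg, so your write-up supplies precisely the argument being implicitly invoked.
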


         Lemma \ref{lm:HL} implies the following statement.

\begin{Prop}
           \label{lm:ind}
           Consider a set $\Lambda\subset \bCP$
containing $\infty$ and such that $\lambda=0$ is an interior
point in  $\bCP\setminus \Lambda$. Let
$\eps_{1}=\eps_{1}(\lambda)$, $\lambda\in \Lambda$, be
a continuous solution to (\ref{eq:for a1}). If
            the polynomials
$\Phi_{0}(\eps_{1}(\lambda),\lambda^{-1},\lambda^{-1}m)$, $m=1,2,\ldots$, 
satisfy the condition 
	\begin{equation}
	 \sup
\{\vert\Phi_{0}(\eps_{1}(\lambda),\lambda^{-1},\lambda^{-1}m)
^{-1}(\lambda^{-1}m)^{r}\vert \,:\,\lambda\in \Lambda,\ m\geq 0,\ r=0,1,\ldots,
k-1 \}<\infty
	    \label{eq:conv1}
	\end{equation}
then there exists $D>0$ such that the series
$\sum_{i\geq 1}\eps_{i}y^i$ defined by the recurrence
(\ref{eq:rec.rel}) converges for any $ \lambda\in\Lambda$
in the disk $\vert y\vert < D$.
\label{pr:conv}
	\end{Prop}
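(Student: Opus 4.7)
The plan is to apply the majorant lemma (Lemma \ref{lm:HL}) uniformly for $\lambda\in\Lambda$, by bounding the coefficients of the recurrence (\ref{eq:rec.rel}) in a way that preserves enough structure to produce a dominating algebraic equation whose solution is analytic near the origin.

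\textbf{Step 1 (uniform coefficient bounds).} First I would unpack $G_m:=-\Phi_{0}^{-1}B$ as an explicit polynomial in $\eps_{1},\ldots,\eps_{m}$ using Lemmas \ref{transfeq}, \ref{powerseries} and \ref{powerseriesJ}. By these lemmas, every monomial of $G_m$ has a coefficient of the form $c\,\lambda^{-\alpha}\,\Phi_{0}^{-1}(\lambda^{-1}m)^{\beta}$ with $\alpha\ge 0$, $\beta\le k-1$ (thanks to the degree bound on $B$ in $\lambda^{-1}m$ coming from Lemma \ref{rec.rel}) and a combinatorial factor $c$ independent of $m$ and $\lambda$. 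Since $\Lambda$ stays bounded away from $0$ (because $0$ is interior to $\bCP\setminus\Lambda$), the quantity $|\lambda^{-1}|$ is uniformly bounded on $\Lambda$; combining this with hypothesis (\ref{eq:conv1}) and the continuity of $\eps_{1}(\lambda)$ on the compact set $\Lambda\cup\{\infty\}\subset\bCP$, one obtains a uniform-in-$(\lambda,m)$ bound $C>0$ for each coefficient of $G_m$.

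\textbf{Step 2 (weight structure and the majorant).} Tracing through the formulas in Lemmas \ref{transfeq} and \ref{powerseries} one sees that a monomial $\eps_{i_{1}}\cdots\eps_{i_{s}}$ with $s\le k$ appears in $G_m$ only when the subscripts satisfy a constraint of the form $\sum_{l}i_{l}=m+s-l_{0}$, with a shift $l_{0}$ arising from the degrees of the $P_{i}(y)$. In generating-function form with $V(y):=\sum_{j\ge 0}\eps_{j+1}y^{j}$, this identifies the recurrence $\eps_{m+1}=G_m$ with the vanishing of the $y^m$-coefficient of a power series $\sum_{i}P_{i}(y)V(y)^{i}+\lambda^{-1}(\cdots)$. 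I would then define the majorant $H_m(x_{1},\ldots,x_{m})$ by replacing every coefficient of $G_m$ with $C$ while \emph{preserving this weight structure}. The associated majorant series $U(t):=\sum_{i\ge 1}E_{i}t^{i}$ then satisfies an algebraic fixed-point equation $\Psi(t,U(t))=0$, where $\Psi(t,u)$ is a polynomial of degree at most $k$ in $u$ whose structure mirrors the plane-curve equation (\ref{eq:basic}) with all coefficients replaced by their $C$-bounds.

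\textbf{Step 3 (analyticity of the majorant and conclusion).} Since the characteristic equation (\ref{lambdadef}) has simple roots, I would choose $E_{1}\ge\sup_{\lambda\in\Lambda}|\eps_{1}(\lambda)|$ large enough so that $E_{1}$ is a simple root of $\Psi(0,u)=0$, ensuring $\partial_{u}\Psi(0,E_{1})\ne 0$. The analytic implicit function theorem then produces a disk $\{|t|<D\}$ on which $U(t)$ is analytic, and inductively $E_{m}\ge 0$ satisfy $E_{m+1}=H_m(E_{1},\ldots,E_{m})$. By Lemma \ref{lm:HL}, $|\eps_{m}(\lambda)|\le E_{m}$ for every $\lambda\in\Lambda$ and every $m$, so the series $\sum_{i}\eps_{i}(\lambda)y^{i}$ converges in $\{|y|<D\}$ uniformly for $\lambda\in\Lambda$.

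\textbf{Main obstacle.} The delicate point is Step 2: a naive majorant obtained by merely bounding each coefficient by $C$ would produce a dominating series with no reason to have positive radius of convergence. The crux is to preserve enough of the algebraic-combinatorial bookkeeping (subscripts versus number of factors versus degrees of the $P_{i}$) so that the majorant is recognisable as the Taylor expansion of the solution of an algebraic equation near a simple root, where the analytic implicit function theorem can be invoked.
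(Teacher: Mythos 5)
Your overall strategy -- uniform coefficient bounds via (\ref{eq:conv1}) and the fact that $0$ is interior to $\bCP\setminus\Lambda$, followed by the majorant Lemma \ref{lm:HL} -- is exactly the paper's, and your Step 1 is essentially correct. The gap is in Steps 2--3, where the dominating series is actually constructed.

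First, the claim that one can ``choose $E_{1}\ge\sup_{\lambda}|\eps_{1}(\lambda)|$ large enough so that $E_{1}$ is a simple root of $\Psi(0,u)=0$'' does not make sense: once every coefficient of $G_m$ has been replaced by the bound $C$, the polynomial $\Psi(0,u)$ is fixed, and there is no reason whatsoever for $E_{1}$ (or any number exceeding $\sup|\eps_1|$) to be a root of it, let alone a simple one. The simplicity of the roots of the reciprocal characteristic equation (\ref{lambdadef}) is a property of the \emph{original} coefficients $a_{ii}$; it is destroyed by the crude replacement of coefficients, so the implicit function theorem cannot be anchored there. Second, even granting an analytic branch of $\Psi(t,u)=0$, Lemma \ref{lm:HL} needs the recurrence in the explicit form $E_{m+1}=H_m(E_1,\ldots,E_m)$ with $H_m$ having \emph{positive} coefficients; extracting $E_{m+1}$ from $[\Psi(t,U)]_m=0$ requires dividing by $\partial_u\Psi(0,E_1)$, and nothing guarantees that this division preserves positivity. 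You never verify either point, and this is precisely where a ``naive'' majorant fails -- the obstacle you flag at the end is the one your own construction does not overcome.

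The paper sidesteps both problems by a different bookkeeping. It first writes $N=\eps_{1}+M$ with $M(0)=0$, so the constant term is eliminated and the majorant only has to control $M$; the terms with $\alpha_2=0$, $I=(l)$ are split off to form $\Phi_0\eps_{m+1}$ (this is where (\ref{eq:conv1}) enters, absorbing the dangerous factors $(\lambda^{-1}m)^{|I|}$ produced by the falling factorials in Lemma \ref{powerseries}); and then it deliberately \emph{over}-majorizes by summing $L\sum_{i,j}[y^{i}M_{abs}^{j}]_m$ over \emph{all} pairs $(i,j)$ except $(0,0)$ and $(0,1)$. This gives the fixed-point equation $M=L\bigl(\tfrac{1}{(1-y)(1-M)}-1-M\bigr)$, i.e.\ an explicit quadratic $(1+L)M^{2}-M+Ly/(1-y)=0$, whose branch vanishing at $y=0$ is given by the quadratic formula and is manifestly analytic for $|y|<(1+2L)^{-2}$, with nonnegative Taylor coefficients by construction of the recurrence. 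No implicit function theorem at a simple root and no degree-$k$ mirror of the plane curve are needed; preserving the fine weight structure, far from being the crux, is exactly what the paper throws away.
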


	\begin{proof}
By Lemma \ref{transfeq} we know that
\begin{equation}
           \lambda^{-k}S_{\lambda}(yN)=\sum c_{\alpha_{1} \alpha_{2} J}
(\lambda^{-1})^{\alpha_{1}}y^{\alpha_{2}}(\lambda^{-1}y)^{\vert
J\vert}N^J,
\label{exp:eq}
\end{equation}
where the sum is taken over a finite set of indices $\alpha_{i}\in
{\bZ}_{\geq 0}$ and $J\in FS$ such that
$\alpha_{1}+\vert J\vert\leq k-1$.

        The choice of $\eps_{1}$ (depending
on $\lambda$) 
determines $N(y)=\eps_{1}+M(y)$, where $M(y)=\sum_{i=1}^\infty
\eps_{i+1}y^{i}$ has no constant term.
If $J$ starts with precisely $l$ zeros, i.e., $J=0(l)\cup J_{+}$,
then
$$N^{J}=(\eps_{1}+M)^lM^{J_{+}}=\sum_{s=0}^{l}
\binom{l}{s}\eps_{1}^{l-s}M^{0(s)\cup J_{+}}=\sum_{J_{1}\in W}
d_{J_{1}}M^{J_{1}},
$$
where all $J_{1}$'s that occur in the last sum  satisfy the relation $\vert
J_{1}\vert =\vert J\vert $.
Using this and substituting $M(y)$ into (\ref{exp:eq}), we get the equation
\begin{equation}
	    \label{exp:eq2}
        \sum d_{\alpha_{1} \alpha_{2} I}
(\lambda^{-1})^{\alpha_{1}}y^{\alpha_{2}}(\lambda^{-1}y)^{\vert
I\vert}M^I=0,
\end{equation}
where the relation $\alpha_{1}+\vert I\vert\leq k-1$ is still valid.
Note that  (\ref{exp:eq2}) contains no non-zero $d_{\al_{1}\al_{2}J}$
such that $\al_{2}=0$ and $\vert J\vert=0$ since this case will
contribute to a nontrivial constant term (degree $0$ in $y$). But the
constant term was already eliminated by the explicit choice of $\eps_{1}$.
The coefficients in (\ref{exp:eq2})
are polynomials in $\eps_{1}(\lambda)$. Since by the discussion of the asymptotic behavior of equation (\ref{eq:for a1}) for $\eps_{1}(\lambda)$ we know that 
$\lim_{\lambda\to\infty}\eps_{1}(\lambda)$
is finite, these coefficients are bounded
in the domain $\Lambda$.


Now we will analyze how the recurrence formula (\ref{eq:rec.rel}) is
affected by our choice of a branch $\eps_{1}(\la)$. We will do this in
a way similar to the arguments in Lemma~\ref{rec.rel}.
Since the constant term of $M$ vanishes by Lemma
\ref{powerseries} parts (2)-(4) one gets that only the terms
$(\lambda^{-1})^{\alpha_{1}}y^{\alpha_{2}}(\lambda^{-1}y)^{\vert
I\vert}M^I$ 
for which
             $I=(l)$ has length $1$ and $\alpha_{2}=0$ will satisfy the
             condition that
             the $m$-th coefficient is non-zero
             $\mathrm{mod}(\eps_{1},\ldots,\eps_{m})$. For these terms one has
             $$[(\lambda^{-1})^{\alpha_{1}}(\lambda^{-1}y)^{l}M^{(l)}]_{m}    =         (\lambda^{-1})^{\alpha_{1}}m(m-1)\ldots(m-l+1)\eps_{m+1}.$$
             Hence we can split the terms in the left-hand side of
             equation (\ref{exp:eq2}) into two groups, namely,  the sum
             $
             \sum_{\alpha_{1},l} d_{\alpha_{1} 0
(l)}(\lambda^{-1})^{\alpha_{1}}(\lambda^{-1}y)^{l}M^{(l)}
             $
             and the remaining terms
$ \sum _{V} d_{\alpha_{1} \alpha_{2} I}
(\lambda^{-1})^{\alpha_{1}}y^{\alpha_{2}}(\lambda^{-1}y)^{\vert
I\vert}M^I$. These remaining terms
correspond to a certain index set $V$. The first sum has the
$m$-th coefficient
equal to $\Phi_{0}(\eps_{1},\lambda^{-1},\lambda^{-1}m)\eps_{m+1}$,
while the $m$-th coefficient in the second sum is a polynomial in the
preceding coefficients. By the hypothesis of Proposition~\ref{pr:conv} we have 
$\Phi_{0}(\eps_{1},\lambda^{-1},\lambda^{-1}m)\neq 0$ if $\lambda\in
\Lambda$.
Hence the recurrence relation expressing $\eps_{m+1}$ 
        can be rewritten as
             \begin{equation}
\eps_{m+1}=-\Phi_{0}(\eps_{1},\lambda^{-1},\lambda^{-1}m)^{-1}\sum_{V}
d_{\alpha_{1} \alpha_{2} I}
[(\lambda^{-1})^{\alpha_{1}}y^{\alpha_{2}}(\lambda^{-1}y)^{\vert
I\vert}M^I]_{m}.
	\label{eq:konvradie}
\end{equation}
Interpret its right-hand side as a polynomial
$G_{m}(\eps_{2},\ldots,\eps_{m})$ with coefficients depending on
$\lambda$.
If $K= max \{ \vert d_{\alpha_{1}
\alpha_{2} I}\vert,\ (\alpha_{1},
\alpha_{2}, I)\in V\}$ then
clearly
\begin{equation}
           \label{eq:rec1}
           \vert G_{m}(\eps_{2},\ldots,\eps_{m})\vert \leq K\sum_{V}\vert
\Phi_{0}(\eps_{1},\lambda^{-1},\lambda^{-1}m)^{-1}[(\lambda^{-1})^{\alpha_{1}}y^{\alpha_{2}}(\lambda^{-1}y)^{\vert
I\vert}M^I]_{m}\vert.
\end{equation}

We can find an even simpler upper bound for these terms.
Set $J=(j_{1},j_{2},..,j_{s})$ and recall that
$$
[y^{\vert J\vert }M^{J}]_{m}=\sum_{c_{1}+\ldots+c_{s}=m}\prod_{i=1}^s
c_{i}(c_{i}-1)\ldots
(c_{i}-j_{i}+1)\eps_{c_{i}}
$$
by Lemma \ref{powerseries}.
Clearly,  $\prod_{i=1}^s c_{i}(c_{i}-1)\ldots
(c_{i}-j_{i}+1)\leq m^{\vert
J\vert}$ and thus we get
$$
\vert[y^{\vert J\vert}M^{J}]_{m}\vert \leq m^{\vert J\vert}
\sum_{c_{1}+\ldots+c_{s}=m}\prod_{i=1}^s
\vert \eps_{c_{i}}\vert .
$$

Setting $ M_{abs}:=\sum_{i=2}^{\infty}\vert
\eps_{i}\vert t^i$ and noting that $s=lng(J)$ we obtain
$$
\sum_{c_{1}+\ldots+c_{s}=m}\prod_{i=1}^s
\vert \eps_{c_{i}}\vert =[M_{abs}
^{lng(J)}]_{m}
$$
and so we have the bound 
$$
\vert[y^{\vert J\vert}M^{J}]_{m}\vert \leq m^{\vert J\vert}[M_{abs}
^{lng(J)}]_{m}.
$$

This implies the inequality
\begin{equation}
\vert G_{m}(\eps_{2},\ldots,\eps_{m})\vert \leq K\sum_{V}\vert
\Phi_{0}(\eps_{1},\lambda^{-1},\lambda^{-1}m)^{-1}\lambda^{-\alpha_{1}}(\lambda^
{-1}m)^{\vert
I\vert}\vert [y^{\alpha_{2}}M_{abs}
^{lng(I)}]_{m}.
\label{eq:for}
    \end{equation}

As it was mentioned in the beginning of the proof, for each term in
this sum one has $\al_{1}+\vert I\vert \le k-1$.
Hence both inequalities $\al_{1}\le k-1$ and $\vert I\vert \le k-1$ hold.
The assumption that $0$ is an interior point of the complement of
$\Lambda$ implies that there is a positive real number $D$ such that
$\vert \lambda^{-i}\vert\leq D$ for $\lambda\in \Lambda$ and
$i=1,\ldots,k-1$. By the second assumption of the proposition there is a
constant $C$ such that
$$
\vert
\Phi_{0}(\eps_{1},\lambda^{-1},\lambda^{-1}m)^{-1}(\lambda^{-1}m)^{\vert
I\vert}\vert\leq C
$$
for $\lambda\in \Lambda$ and any $I$ occurring in (\ref{eq:for}).
So
\begin{multline*}
\vert G_{m}(\eps_{2},\ldots,\eps_{m})\vert
\leq KCD\sum_{V} [y^{\alpha_{2}}M_{abs}
^{lng(I)}]_{m}\leq \\
L\sum_{i,j}
[y^iM_{abs}^j]_{m}=:H_{m}(\vert \eps_{2}\vert,\ldots,\vert \eps_{m}\vert)
\end{multline*}
for some positive real number $L:=KCDr$,
where $r$ is the maximal number of indices in $V$ (the index set introduced above) 
that correspond to some $i=\alpha_{2}$ and $j=lng(I)$. Note that
$L$ is independent of $m$, since the index set $V$ just depends on $k$. The summation in $H_{m}$
is taken over
all pairs of non-negative integers $(i,j)$ except $(0,0)$ and $(0,1)$.
(The fact that $(0,0)$ is not needed follows from the observation that there
are no terms in
(\ref{exp:eq2}) with $\alpha_2=0$
and $lng(I)=0$; the fact that $(0,1)$ is not needed 
is implied by the observation that the terms with $\alpha_{2}=0$ and
$lng(I)=1$
were excluded from $V$ by the splitting of terms in (\ref{exp:eq2})  for the construction of $V$.) This is an infinite sum, but
we still get a well-defined polynomial since
$M$ contains no constant term.

The recurrence formulae $\eps_{m+1}=H_{m}(\eps_{2},\ldots,\eps_{m})$, 
$m\geq 1$, 
correspond to equating the $m$-th coefficients in both sides of the
relation 
$$
M= L\sum_{i,j} y^iM^j= L\left(\frac{1}{(1-y)(1-M)}-1-M\right) 
\label{eq:nth}
$$
defining the power series $M(y)=\sum_{i=1}^\infty\eps_{i+1}y^{i}$.
(Note that for $m=1$ we simply get  $\eps_{2}=H_{1}\in
{\bR}.$)
The above equation
is equivalent to the quadratic equation
$$(1+L)M^2-M+Ly/(1-y)=AM^2-M+C=0$$
and its solutions  are
$\frac{1}{2A}\pm{\frac{\sqrt{1-4CA}}{2A}}.$
These solutions are univalent analytic functions in case
$\vert 4CA\vert<1$. The latter condition is easily seen to be satisfied for
$\vert y\vert <\frac {1}{(1+2L)^2}$. These solutions
are developable in power series in this
disk and their power series will satisfy our recurrence formulae. This  
      accomplishes  the proof of Proposition \ref{pr:conv}.
Indeed, we have found a
sequence of  polynomials
that satisfy the assumptions of Lemma~\ref{lm:HL} and proved that the
formal power series that they define is
analytically convergent in some disk around $y=0$.
(This argument mimics the elegant proof that an algebraic
function has a convergent power series expansion given in
\cite[pp.~34-35]{HL}.)
\end{proof}

\begin{rema+} It is worth mentioning  that if  we know the supremum in
Proposition \ref{lm:ind} we can easily get  
the estimate $\frac {1}{(1+2L)^2}$ for the radius of convergence
of the solutions to equation (\ref{eq:start}) at least for small $k$. 
\end{rema+}

\medskip \noindent
{\bf Convergence of polynomial solutions.}
We will now show that the condition on the supremum in Proposition \ref{pr:conv} is generically satisfied for the 
families of polynomial eigenfunctions used in Propositions~\ref{lm:basic} and ~\ref{dlog}. This means that we assume that the corresponding pencil is of general type. If
$p_{n}(z)$ is any monic degree $n$ polynomial then its normalized  
logarithmic derivative $L_{n}(z)=\frac{p'_{n}(z)}{\lambda_{n} p_{n}(z)}$
expanded at infinity using the variable $y=1/z$ satisfies
the relation
$$
L_{n}(z)=yN(y)=\frac{n}{\lambda_{n}}y+\ldots =\sum_{i=1}^\infty \eps_{i,n}y^{i}.
$$
In particular, $\eps_{1,n}=\frac {n}{\lambda_{n}}.$ (Note that so far $\la_{n}\neq
0$ is  any constant.)

Now fix $\la_n$ to be one of the families of eigenvalues in Propositions~\ref{lm:basic}. In a certain neighborhood of infinity $\Lambda$ there is then for $n$ so large that $\la_n\in \Lambda$ a unique degree $n$ eigenpolynomial $p_{n}(z)$ that satisfies
$T_{\la_{n}}p_{n}(z)=0$. By Proposition~\ref{lm:basic}
for $n$ large enough one has 
$
    \lambda_{n}\sim \al n,
            $
             where $\al$ is one of the $k$ distinct roots of
             (\ref{eq:char}).
This gives $$ \lambda_{n}^{-1}n=\eps_{1,n}\to 1/\al$$
 as $n\to\infty$. Note that $1/\al$ is a root of the
reciprocal characteristic equation (\ref{lambdadef}):
\begin{equation}
	\label{lambdadef2}
	a_{k,k}\xi^{k}+a_{k-1,k-1}\xi^{k-1}+\ldots+a_{0,0}=0.
	\end{equation}
	
We next show that the assumption in Proposition \ref{lm:ind}
ensuring the convergence of our formal power series solutions is
satisfied for a large class of cases.

\begin{Prop}
	\label{prop:P_{0}}
	Assume that
\begin{itemize}
\item[(i)] the polynomial 
$\sum_{i=0}^{k}a_{ii}t^{i}$ has degree $k$ (i.e., $a_{kk}\neq 0$)
	\item[(ii)]  the root $\alpha^{-1}$ of
 $\sum_{i=0}^{k}a_{ii}t^{i}=0$ is simple,
\item[(iii)] for any $r>0$ the number  $(1-r)\alpha^{-1}$
is never a root of this same polynomial (in particular, taking $r=1$
implies that $0$  is not a root of the above polynomial, which gives  $a_{00}\neq
0$).
\end{itemize}
Then there exists a neighborhood $\widetilde\Lambda\in \bC$ of infinity with the following property: if  $n\in \bZ_{\geq 0}$ is such that $\la_n\in \widetilde\Lambda$  and $\eps_{1,n}=\frac {n}{\lambda_{n}}, $  then
\begin{equation*}
	 \sup
\{\vert\Phi_{0}(\eps_{1,n},\lambda_n^{-1},\lambda_n^{-1}m)
^{-1}(\lambda_n^{-1}m)^{r}\vert \,:\,m\geq 0,\ r=0,1,\ldots,
k-1 \}<\infty
	\end{equation*} 
and so the condition in Proposition \ref{lm:ind} is
valid.
\end{Prop}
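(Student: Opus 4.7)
The plan is to reparametrize inside $\Phi_0$ using the ratio $s:=m/n$. Since $\eps_{1,n}=n/\la_n$, we have $\la_n^{-1}m = s\eps_{1,n}$, and the formula of Lemma~\ref{rec.rel} can be rewritten as
\[
\Phi_0(\eps_{1,n},\la_n^{-1},\la_n^{-1}m)=g(s,\eps_{1,n})+\la_n^{-1}E(\eps_{1,n},\la_n^{-1},s\eps_{1,n}),
\]
where, setting $f(t):=\sum_{i=0}^k a_{ii}t^i$,
\[
g(s,\eps):= -\frac{f((1-s)\eps)-f(\eps)}{s\eps}= -\sum_{i=1}^k a_{ii}\eps^{i-1}\frac{(1-s)^i-1}{s}
\]
is in fact a polynomial in $(s,\eps)$ of degree $k-1$ in $s$, with leading coefficient $(-1)^{k-1}a_{kk}\eps^{k-1}$ (nonzero near $\eps=1/\al$ by condition (i)), while $E$ is a polynomial of degree at most $k-2$ in its third slot.

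Next I would analyze the limit polynomial $g(s,1/\al)$. By L'Hopital's rule, $g(0,1/\al)=f'(1/\al)$, which is nonzero by condition (ii) since $1/\al$ is a simple root of $f$. For $s>0$, the identity $f(1/\al)=0$ reduces $g(s,1/\al)$ to $-\al f((1-s)/\al)/s$, and this is nonzero by condition (iii). Hence $g(\cdot,1/\al)$ is a polynomial of degree exactly $k-1$ with no zeros on $[0,\infty)$. A standard argument bounding each factor $|s-s_i|$ below by $c_i(1+s)$ for $s\in[0,\infty)$ and each root $s_i\notin[0,\infty)$ then produces a constant $c_0>0$ with
\[
|g(s,1/\al)|\ge c_0(1+s)^{k-1}\qquad\text{for every }s\ge 0.
\]
Since the roots of $g(s,\eps)$ depend continuously on $\eps$ and its leading coefficient is uniformly nonzero near $1/\al$, this bound (with $c_0$ replaced by $c_0/2$) persists for all $\eps$ in some open neighborhood $U$ of $1/\al$.

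Finally, I would choose $\widetilde\Lambda$ to be a neighborhood of $\infty$ small enough that for every $\la\in\widetilde\Lambda$ the ratio $n/\la$ lies in $U$ (possible since Proposition~\ref{lm:basic} yields $\eps_{1,n}\to 1/\al$) and $|\la|^{-1}$ is small. The error term then satisfies $|\la^{-1}E(\eps,\la^{-1},s\eps)|\le C|\la|^{-1}(1+s)^{k-2}$ by the degree bound on $E$ combined with $|\eps|$ being bounded, and on shrinking $\widetilde\Lambda$ further it is dominated by $(c_0/4)(1+s)^{k-1}$ uniformly in $s\ge 0$. This yields $|\Phi_0(\eps_{1,n},\la_n^{-1},\la_n^{-1}m)|\ge (c_0/4)(1+s)^{k-1}$; combined with $|\la_n^{-1}m|^r=|s\eps_{1,n}|^r\le C'(1+s)^r$ for $r\le k-1$, this gives the required uniform bound. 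The main obstacle is establishing the polynomial lower bound $|g(s,1/\al)|\gtrsim (1+s)^{k-1}$ on the entire ray $[0,\infty)$: both (ii) (to exclude the zero at $s=0$) and (iii) (to exclude zeros at $s>0$) are essential, and without either, some subsequence of $m$ could drive $|\Phi_0^{-1}|$ to infinity.
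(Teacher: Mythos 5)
Your proof is correct, and it rests on exactly the same identification of where the three hypotheses enter as the paper's: after writing $\la_n^{-1}m=s\,\eps_{1,n}$ with $s=m/n$, the leading part of $\Phi_0$ is the difference quotient $-\bigl(f((1-s)\eps)-f(\eps)\bigr)/(s\eps)$, whose non-vanishing at $s=0$ is condition (ii), at $s\in(0,\infty)$ is condition (iii), and whose degree-$(k-1)$ leading behaviour as $s\to\infty$ is condition (i). The difference is in how you conclude. The paper argues by contradiction: it assumes a bad sequence $(m_j,n_j)$ exists, passes to a subsequence along which $m_j/n_j\to r\in[0,\infty]$, and derives a contradiction separately in the case $r=\infty$ (using $a_{kk}\neq 0$ and the degree bound on $E$) and in the case $r<\infty$ (using (ii) when $r=0$ and (iii) when $r>0$). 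You instead prove a direct, uniform quantitative estimate $|\Phi_0|\geq c\,(1+m/n)^{k-1}$ by factoring the degree-$(k-1)$ polynomial $g(\cdot,1/\al)$, bounding each linear factor $|s-s_i|$ below by a multiple of $1+s$ on the ray $[0,\infty)$, and propagating the bound to nearby $\eps$ by continuity of roots. Your route buys an effective constant and treats all regimes of $m/n$ at once, at the modest cost of having to verify the uniformity of the root-location bound in $\eps$ (which you correctly note persists after halving the constant); the paper's compactness argument is shorter but non-quantitative. One cosmetic remark: the paper's Lemma on $\Phi_0$ gives $\deg_{\la^{-1}m}E<k-2$, slightly stronger than the $\leq k-2$ you use, but your estimate of the error term works either way since the $\la^{-1}$ prefactor already makes it subordinate to $(c_0/2)(1+s)^{k-1}$.
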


	\begin{proof}Recall first that by Lemma \ref{rec.rel} we have
	 \begin{equation}
	 \label{eq:raa}
           \Phi_{0}(\eps_{1},\lambda^{-1},\lambda^{-1}m)=-\sum_{l=0}^ka_{ll}
           \frac{(\eps_{1}-\lambda^{-1}m)^{l}-\eps_{1}^{l}}{\lambda^{-1}m}+
           \lambda^{-1}E(\eps_{1},\lambda^{-1},\lambda^{-1}m).
           \end{equation}

	 We need to establish the existence of a finite supremum in the
	 previous proposition. Argue by contradiction: if the proposition is not true there are sequences of values 
	    $m_j$ and $n_j$, $j=1,2,\ldots$, such that 
	    \begin{itemize}
	    \item $\lim_{j\to\infty}n_j=\infty$,
	    \item  for some $i\leq k-1$ one has 
$(\lambda_{n_j}^{-1}m_j)^{-i}\Phi_{0}(\eps_{1,{n_j}},\lambda_{n_j}^{-1},\lambda_{n_j}^{-1}m_j)\to 0$
as 
	    $j\to \infty$.
	    \end{itemize}
	    We will in the following use the notation $n=n_j$, $m=m_j$, $\eps_{1}=\eps_{1,n}=\frac {n}{\lambda_{n}},$ and $\la=\la_n\in \Lambda$, where $n\in \bZ_{\geq 0}$, and thus supress indices in the sequence. All limits will be taken as $j\to \infty$.
	 
 By taking a subsequence we see that there exist two
possibilities: either there
	    is a sequence as above for which $m/n$ converges to a finite
	    limit $r\in \mathbb{R}$, or there is such a sequence for which
	    $m/n\to \infty$.  
	    In the second case we first note that also $\lambda^{-1}m\to \infty$ (since $\lim \lambda^{-1}m=\lim\lambda^{-1}n\cdot\lim(m/n)= \al^{-1}\lim m/n$). Hence by our asssumption also
	    $(\la^{-1}m)^{-(k-1)}\Phi_{0}(\eps_{1},\la^{-1},\la^{-1}m)\to 0$.
	    To get a contradiction we use the fact that
	    the degree in $\lambda^{-1}m$ of $E$ is less than $k-2$  by Lemma
	    \ref{rec.rel}, and that
	    $\lambda^{-1}\to 0$ for the chosen sequence (recall that $\lambda$ 
depends on $n_j$ and $n_j\to\infty$).
	    It follows that $\lim(\lambda^{-1}m)^{-(k-1)}
	    E(\eps_{1},\lambda^{-1},\lambda^{-1}m)=0$. Then 
	    the hypothesis that $a_{kk}\neq 0$ and the explicit description in (\ref{eq:raa}) imply that
	    $\lim(\lambda^{-1}m)^{-(k-1)}
	    \Phi_{0}(\eps_{1},\lambda^{-1},\lambda^{-1}m)=(-1)^{k-1}a_{kk}\neq 0$, which is the desired 
        contradiction.

	    In the first case, the
	    hypothesis implies that (for our sequence of $m$ and $n$) $\lambda^{-1}m=(\lambda^{-1}n)(m/n)\to r/\alpha$.  Furthermore	   
	    \begin{equation}
	    \label{eq:extralim}
	    \lim
	    \Phi_{0}(\eps_{1},\lambda^{-1},\lambda^{-1}m)=(r/\alpha)^{i}\lim((\la^{-1}m)^{-i}\Phi_{0}(\eps_{1},\lambda^{-1},\lambda^{-1}m))=0.
	    \end{equation}

Now we also have $\lambda\to \infty$, which implies that $\lim
	    \lambda^{-1}E(\eps_{1},\lambda^{-1},\lambda^{-1}m)=0$ and
thus (\ref{eq:extralim}) yields 
	     \begin{equation}
	     \label{eq:finall}
	   \lim
\sum_{l=0}^k a_{ll}\frac{(\eps_{1}-\lambda^{-1}m)^{l}-\eps_{1}^{l}}{\lambda^{-1}m
}	   =0.
	    \end{equation}

	    If $r\neq 0$ this immediately implies that both $\alpha^{-1}-r\alpha^{-1}$
and $\alpha^{-1}=\lim\eps_{1}$ solve the same equation, namely  
$\sum_{l=0}^{k}a_{ll}t^{l}=0$. This
	    gives a contradiction to the assumed conditions on this equation. On the other hand, if $r=0$ then a calculation shows that 
	the left-hand side of  (\ref{eq:finall}) is $\frac{d}{dt}(\sum_{l=0}^{k}a_{ll}t^{l})\big|_{t=\alpha^{-1}}$, which contradicts the fact that $\alpha^{-1}$ is a simple root and finishes the
	    proof. 
\end{proof}

	    \section {Proof of Theorems \ref{th:main}  and \ref{th:c}}
	    \label{sec:5}
	     We start with Theorem~\ref{th:c}.  Consider a pencil $T_\lambda$ of general type as defined in the introduction. By Proposition \ref{lm:basic} its polynomial eigenfunctions split into $k$ distinct families $p_{n,j}$, $j=1,\ldots,k$. Fix one of these families $p_n:=p_{n,j}$ with corresponding eigenvalues $\la_n\sim \alpha n$ as $n\to\infty$.

	    For each $\la$ we have an equation (\ref{Dlog}). Among its solutions in an open set $\Omega$ are all normalized logarithmic derivatives $\frac{p'}{\la p}$ of eigenfunctions $p$ (i.e., $T_\lambda*p=0$) that are defined and non-vanishing in $\Omega$. 
	 The recurrence relation (\ref{eq:rec.rel}) -- that we studied in the two last sections -- solves equation  (\ref{Dlog}) by means of the power series expansion at $y^{-1}=z=\infty$ of a solution of the special form $yN(y)=\sum_{i=0}^{\infty}\eps_{i+1}y^{i+1}$ (see Lemma \ref{rec.rel} in \S \ref{sec:3}). In Lemma  \ref{recdeg0} it was shown that for each $\la$ large enough there are $k$ possible initial values of $\eps_1=\eps_1(\la)$.  Let $\la=\la_n$ be large enough for this to be true.
	  Since  $T_\lambda$ is of general type the condition in Proposition~\ref{prop:P_{0}} is satisfied. Hence by Proposition \ref{lm:ind} there exists a non-trivial disk $D$ around $\infty$ where for any $n$ all $k$ formal solutions to the recurrence relation (\ref{eq:rec.rel}) converge to analytic functions.
	 
	Consider now $L_{n}(z)=\frac{p'_{n}(z)}{\lambda_{n} p_{n}(z)}$ for an arbitrarily fixed $n$. This is a solution to (\ref{Dlog}) and has the required zero at infinity, so there is a neighborhood of $\infty $ where 
	 $L_n(z)$  can be expanded as a convergent power series $yN_n(y)$. In particular, this power series will be one of the $k$ solutions of the recurrence relation and thus it will actually converge in $D$. It follows that the zeros
of the polynomials $p_n$ are contained in the compact region of the complex
plane that is the complement of $D$.
This fact immediately implies Theorem \ref{th:c}.

Let us now concentrate on Theorem~\ref{th:main} and start by proving the last part of this theorem. Recall that by Remark \ref{Rmk:formal} the recurrence formulas (\ref{eq:rec.rel}) converge as $\la\to \infty $ to recursive formulas determining formal solutions to the algebraic equation (\ref{eq:basic}). This implies that the formal
power series at $\infty$ for $L_{n}(z)$ converges formally (i.e., coefficientwise) to the formal power series at $\infty$ of the algebraic function $\ga(z)$ for which 
		$\lim_{z\to \infty}z\ga(z)=\alpha^{-1}$
		 (see Lemma \ref{lm:gen} in \S \ref{sec:asymp}). Since the convergence radius has a uniform non-zero lower bound, this gives that the convergence is uniform in a
neighborhood of $\infty$.
The different branches of the algebraic function $\ga(z)$ at $\infty$ correspond to the different values of $\alpha$ and this makes it clear that they all occur (cf.~Lemma~\ref{lm:gen}). This proves the last part of Theorem~\ref{th:main}.

To settle the first part of Theorem \ref{th:main}  we need some basic properties of probability measures. 
If $K$ is a compact set in $\bC$ denote by $M(K)$ the
space of all probability measures supported in $K$ equipped with the
weak topology. It is known that $M(K)$ is a sequentially compact
Hausdorff space, which allows one to choose a convergent subsequence
from any sequence of measures belonging to $M(K)$ (which is the statement of Helly's theorem).

          \begin{Lemma}[cf. Lemma 8 of \cite{BR}] Let $\{q_{m}(z)\}$ be a sequence of polynomials
with $\deg q_{m}(z)\to \infty$ as $m\to\infty$. Denote by
          $\mu_{m}$ and $\mu'_{m}$ the root-counting measures of $q_{m}(z)$
          and $q'_{m}(z)$, respectively, and assume that there exists a compact set $K$
          containing the supports of all measures $\mu_{m}$ and therefore also the
          supports of all measures $\mu'_{m}$. If $\mu_{m}\to\mu$ and
          $\mu'_{m}\to\mu'$ as $m\to\infty$ and $u$ and $u'$ are the logarithmic potentials
          of $\mu$ and $\mu'$, respectively, then $u'\le u$ in $\bC$ with 
          equality in the unbounded component of $\bC\setminus \text{supp}
          (\mu)$.
          \label{lm:hans1}
	\end{Lemma}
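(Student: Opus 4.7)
The plan is to derive an algebraic identity comparing $u_{\mu_m}$ with $u_{\mu_m'}$ at the polynomial level, pass to the limit where this identity is well-controlled to obtain $u=u'$ on the unbounded component of $\bC\setminus\mathrm{supp}(\mu)$, and then extend to the inequality $u'\le u$ on all of $\bC$ by a potential-theoretic argument based on subharmonicity.

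Normalising each $q_m$ to be monic (which does not change $\mu_m$) gives $u_{\mu_m}(z)=n_m^{-1}\log|q_m(z)|$ with $n_m=\deg q_m$; since $q_m'$ has degree $n_m-1$ and leading coefficient $n_m$, we also have $u_{\mu_m'}(z)=(n_m-1)^{-1}\log|q_m'(z)/n_m|$. The identity $q_m'(z)/q_m(z)=n_m C_{\mu_m}(z)$ then yields, wherever $q_m(z)q_m'(z)\neq 0$,
\[
u_{\mu_m'}(z)-u_{\mu_m}(z)=\frac{u_{\mu_m}(z)+\log|C_{\mu_m}(z)|}{n_m-1}.
\]
On any compact $V_0$ inside the unbounded component $V$ of $\bC\setminus\mathrm{supp}(\mu)$, for sufficiently large $m$ the distance from $V_0$ to $\mathrm{supp}(\mu_m)\subset K$ is bounded below by some $\delta>0$, giving uniform bounds on $u_{\mu_m}$ and $|C_{\mu_m}|\le 1/\delta$ there. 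Dividing by $n_m-1\to\infty$ shows $u_{\mu_m'}-u_{\mu_m}\to 0$ uniformly on $V_0$; combined with the uniform convergences $u_{\mu_m}\to u$ and $u_{\mu_m'}\to u'$ on $V_0$ (continuity of $\log|z-\xi|$ on $V_0\times K$ plus weak convergence of the measures), this yields $u=u'$ throughout $V$.

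For the global inequality, consider $v:=u-u'$. Outside $\mathrm{supp}(\mu)$ the function $u$ is harmonic and $u'$ is subharmonic, so $v$ is superharmonic there, vanishes on $V$, and decays to zero at infinity. By Gauss--Lucas, $\mathrm{supp}(\mu')\subset\mathrm{conv}(\mathrm{supp}(\mu))$, so each bounded component $W$ of $\bC\setminus\mathrm{supp}(\mu)$ has $\partial W\subset\mathrm{supp}(\mu)$. Since at quasi-every point of $\partial W$ the potentials $u,u'$ are continuous and equal their common limits along sequences in $V$, we have $v=0$ quasi-everywhere on $\partial W$; the minimum principle for the superharmonic function $v$ on $W$ then gives $v\ge 0$ in $W$, and the same limiting argument extends $v\ge 0$ to $\mathrm{supp}(\mu)$ itself.

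The main obstacle is precisely this extension of $v\ge 0$ onto $\mathrm{supp}(\mu)$, since $u$ can drop to $-\infty$ on atoms of $\mu$ and the algebraic identity breaks down at zeros of $q_m q_m'$. The remedy is to observe that the irregular boundary points of $V$ and the atoms of $\mu$ are contained in polar sets of zero logarithmic capacity, outside of which $u=u'$ extends continuously from $V$ to $\partial V$; after this, the minimum principle propagates $v\ge 0$ to the interior of each bounded hole, completing the proof.
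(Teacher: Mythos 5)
The paper itself gives no proof of this lemma (it is imported from Lemma~8 of \cite{BR}), so your argument must stand on its own, and it has two genuine gaps. The first is the claim that for a compact $V_0$ in the unbounded component of $\bC\setminus\mathrm{supp}(\mu)$ the distance from $V_0$ to $\mathrm{supp}(\mu_m)$ is eventually bounded below by some $\delta>0$. Weak convergence does not control supports from outside: for $q_m(z)=(z^m-1)(z-2)$ one has $\mu_m\to\mu=$ normalized arclength on the unit circle, yet $2\in\mathrm{supp}(\mu_m)$ for every $m$; if $V_0\ni 2$ then $C_{\mu_m}$ has a pole in $V_0$, $u_{\mu_m}(2)=-\infty$, and the convergence $u_{\mu_m}\to u$ is not uniform on $V_0$. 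Only the \emph{upper} bounds survive (the stray roots carry mass $o(1)$), so your pointwise/uniform passage to the limit in the (correct and useful) identity $u_{\mu_m'}-u_{\mu_m}=(n_m-1)^{-1}\bigl(u_{\mu_m}+\log|C_{\mu_m}|\bigr)$ breaks down. The standard repair is to integrate against a nonnegative test function $\phi$: since $|C_{\mu_m}(z)|\le\int|z-\xi|^{-1}d\mu_m(\xi)$ and $\log^+t\le\sqrt{t}$, Cauchy--Schwarz and Fubini give $\int\phi\,\log^+|C_{\mu_m}|\,dA\le C(\phi)$ uniformly in $m$, whence $\int\phi\,(u'-u)\,dA=\lim\int\phi\,(u_{\mu_m'}-u_{\mu_m})\,dA\le 0$. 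This yields $u'\le u$ a.e., hence everywhere by the sub-mean-value property of the subharmonic functions $u,u'$; equality on the unbounded component $V$ then follows because $u-u'$ is superharmonic and nonnegative on $V$, extends superharmonically across $\infty$ with value $0$ there, and so vanishes on $V$ by the strong minimum principle. Note that this is the reverse of your order: the global inequality must come first.

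The second gap is the propagation into the bounded components $W$ of $\bC\setminus\mathrm{supp}(\mu)$. You take boundary values of $v=u-u'$ on $\partial W$ to be ``their common limits along sequences in $V$,'' but $\partial W$ need not meet $\overline{V}$ at all (take $\mathrm{supp}(\mu)$ to be a closed annulus, $W$ the inner disk, $V$ the outer component), so there are no such sequences; and even where $\partial W$ touches $\overline V$, the potentials are only upper semicontinuous, so limits from $W$ and from $V$ need not agree. The Gauss--Lucas remark does not help here ($\partial W\subset\mathrm{supp}(\mu)$ is automatic for any closed set, and $\mathrm{supp}(\mu')\subset\mathrm{conv}(\mathrm{supp}(\mu))$ does not follow from Gauss--Lucas plus weak convergence, since the convex hulls of $\mathrm{supp}(\mu_m)$ need not shrink to $\mathrm{conv}(\mathrm{supp}(\mu))$). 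Finally, the last sentence leaves the case where $\mathrm{supp}(\mu)$ has nonempty interior entirely untreated: there $v\ge 0$ on the complement says nothing, and ``polar sets of irregular points'' do not cover an open piece of the support. All of these issues disappear once the inequality is established a.e.\ by the distributional argument above.
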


	\begin{Ex}\label{ex:2} Consider the polynomial sequence $\{z^m-1\}$. The measure
	    $\mu$ is then the uniform distribution on the unit circle of total
	    mass $1$. Its logarithmic potential $u(z)$ equals 
$\log\vert z\vert$
	    if $\vert z\vert \ge 1$ and $0$ in the disk $\vert z\vert \le
	    1$. On the other hand, the sequence of derivatives is given by
	    $\{mz^{m-1}\}$ and the corresponding (limiting) logarithmic potential $u'(z)$ equals $\log
	    \vert z\vert$ in  $\bC\setminus \{0\}$. Obviously,
	    $u(z)=u'(z)$ in $\vert z \vert \ge 1$ and $u'(z)<u(z)$ in $\vert
	    z\vert <1$.
\end{Ex}

In the notation of Theorem~\ref{th:main} consider the family of
eigenpolynomials $\{p_{n,j}(z)\}$ for some arbitrarily fixed value of
the index $j=1,\ldots,k$.   Assume that $N_{j}$ is a subsequence of the
natural numbers such that
\begin{equation}
\mu^{(i)}_{j}:=\lim_{n\to\infty,\,n\in N}\mu_{n,j}^{(i)}
\label{eq:mui}
\end{equation}
exists for $i=0,\ldots,k$, where $\mu_{n,j}^{(i)}$ denotes the root-counting measure of $p_{n,j}^{(i)}(z)$. The existence of such $N_{j}$ follows from the
above remark on $M(K)$ (Helly's theorem). Notice that for each $i$ the logarithmic potential
$u_{j}^{(i)}$ of $\mu_{j}^{(i)}$ satisfies a.e.~the identity
$$u_{j}^{(i)}(z)-u_{j}^{(0)}(z)=\lim_{n\to\infty,\,n\in N_{j}}\frac 1 n
\log \left\vert \frac {p_{n,j}^{(i)}(z)}{n(n-1)\ldots
(n-i+1)p_{n,j}(z)}\right\vert.$$

The next proposition completes the proof of Theorem~\ref{th:main} and also shows the remarkable property that if one considers a sequence of
eigenpolynomials for some spectral pencil then the situation
$u'(z)<u(z)$ seen in Example \ref{ex:2} can never occur.The proposition is true for a larger class of homogenized pencils of exactly solvable differential operators (\ref{eq:diff}) than those of general type. Only two properties  are needed in the proof, namely that $\deg Q_{k}(z)=k$ (i.e., $a_{k,k}\neq 0$, so that all $\alpha_j, j=1,...k$ are non-zero) and that $Q_0\neq 0$. 

\begin{Prop} The measures $\mu_{j}^{(i)}$, $i=0,\ldots,k$, are all equal and the scalar
multiple $\widetilde \Psi_{j}=C_{\mu}/\al_{j}$ of the Cauchy transform of
this common measure $\mu_{j}$ satisfies equation (\ref{eq:basic})
almost everywhere.
\label{lm:hans2}
          \end{Prop}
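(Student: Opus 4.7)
The strategy splits into two parts: establishing the measure equality $\mu_j^{(i)}=\mu_j$ for all $i$, and then passing to the limit in the ODE to derive the algebraic equation. The two key inputs are Lemma \ref{lm:hans1} (potential-theoretic monotonicity) and the eigenfunction equation $T_{\lambda_n}p_{n,j}=0$ rewritten in logarithmic form. First I apply Lemma \ref{lm:hans1} iteratively to the pairs $(p_{n,j}^{(i)},p_{n,j}^{(i+1)})$ for $i=0,\ldots,k-1$ to get the chain $u_j^{(0)}\ge u_j^{(1)}\ge\cdots\ge u_j^{(k)}$ in $\bC$, with equalities holding in the unbounded component of $\bC\setminus\mathrm{supp}(\mu_j^{(i)})$.

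Next, dividing the ODE by $\lambda_n^k p_{n,j}$ gives
\[
\sum_{i=0}^k Q_i(z)\,h_n^{(i)}(z)=0,\qquad h_n^{(i)}(z):=\lambda_n^{-i}\frac{p_{n,j}^{(i)}(z)}{p_{n,j}(z)}.
\]
Using $\tfrac{1}{n}\log|\lambda_n|\to 0$ and the fact that all zero-loci lie in a fixed disk by Theorem \ref{th:c}, one verifies that $\tfrac{1}{n}\log|h_n^{(i)}(z)|\to v_j^{(i)}(z):=u_j^{(i)}(z)-u_j^{(0)}(z)\le 0$ a.e.\ along the subsequence $N_j$. Since $Q_0$ is a nonzero constant, the ODE forces $|h_n^{(0)}(z)|\le C(z)\max_{i\ge 1}|h_n^{(i)}(z)|$ with $C(z)$ independent of $n$, so $0=v_j^{(0)}(z)\le\max_{i\ge 1}v_j^{(i)}(z)$ a.e.; combined with the chain from the previous step this forces $v_j^{(1)}(z)=0$ a.e., hence $u_j^{(1)}\equiv u_j^{(0)}$ in $\bC$ by upper semi-continuity of logarithmic potentials, giving $\mu_j^{(1)}=\mu_j$. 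Iterating the same max-plus-monotonicity argument on the successively differentiated ODEs---each differentiation shifts the leading-order equation (the coefficient of $h_n^{(i)}$ becomes $Q_{i-m}$ after $m$ differentiations) and introduces an $O(\lambda_n^{-1})$ correction that vanishes in the limit---yields $u_j^{(i)}\equiv u_j^{(0)}$ for all $i=0,\ldots,k$. Taking distributional Laplacians then gives the first assertion $\mu_j^{(i)}=\mu_j$ for every $i$.

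For the algebraic equation I use the telescoping product identity
\[
h_n^{(i)}(z)=\Bigl(\prod_{l=0}^{i-1}\frac{n-l}{\lambda_n}\Bigr)\prod_{l=0}^{i-1}C_{\mu_{n,j}^{(l)}}(z),
\]
whose scalar prefactor tends to $\alpha_j^{-i}$. By the measure equality just established, together with weak convergence and uniform compactness of the supports, each $C_{\mu_{n,j}^{(l)}}$ converges to $C_{\mu_j}$ in $L^p_{\mathrm{loc}}(\bC)$ for every $1\le p<2$; passing to a further subsequence we may assume pointwise a.e.\ convergence. Therefore $h_n^{(i)}(z)\to\alpha_j^{-i}C_{\mu_j}(z)^i=\widetilde\Psi_j(z)^i$ a.e., and the limit of the ODE yields $\sum_i Q_i(z)\widetilde\Psi_j(z)^i=0$ a.e.\ in $\bC$, proving the second assertion.

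The main technical obstacle is the iteration step in the second paragraph: at each differentiation one must verify that the $O(\lambda_n^{-1})$ correction terms do not spoil the max-plus-monotonicity conclusion, which requires careful bookkeeping using $\tfrac{1}{n}\log|\lambda_n|\to 0$ and $\lambda_n\sim\alpha_j n$. A secondary point is the justification of a.e.\ convergence of products of Cauchy transforms, which follows from standard bounds on Cauchy transforms of probability measures supported in a fixed compact set.
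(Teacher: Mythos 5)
Your proof is correct and follows the same architecture as the paper's: the monotone chain $u_j^{(0)}\ge u_j^{(1)}\ge\cdots\ge u_j^{(k)}$ from Lemma \ref{lm:hans1}, the non-degeneracy input $Q_0\neq 0$ fed through the ODE and its successive derivatives, and the telescoping product of Cauchy transforms with $L^1_{\mathrm{loc}}$/a.e.\ convergence for the algebraic equation. The one place where you genuinely deviate is the mechanism for collapsing the chain: the paper first passes to the limit in the (differentiated) ODEs to obtain the algebraic relations among the limits $C^{(1)},\ldots,C^{(k)}$, reads off from $\al_j^kQ_0\neq 0$ that each $C^{(i)}$ is non-vanishing a.e., and then gets $u_j^{(k)}-u_j^{(0)}=\lim\frac1n\log|p_{n,j}^{(k)}/(n\cdots(n-k+1)p_{n,j})|=0$ in one stroke because the quantity inside the logarithm converges a.e.\ to a finite non-zero limit; you instead take $\frac1n\log$ of the ODE \emph{before} passing to the limit and use the triangle inequality plus the finite-max interchange to get $\max_{i\ge1}(u_j^{(i)}-u_j^{(0)})\ge 0$, collapsing the chain one link at a time. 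Both variants need exactly the same bookkeeping for the differentiated equations (your parenthetical indexing ``the coefficient of $h_n^{(i)}$ becomes $Q_{i-m}$'' is slightly off --- the normalized coefficient is $Q_i+O(\la_n^{-1})$ in the shifted index --- but the substance, that the constant term still tends to $Q_0\neq0$, is what matters). The paper's route buys the explicit limit relation (\ref{eq:start3}) among the $C^{(i)}$'s, which it then reuses verbatim for the second assertion; your route defers all pointwise-limit considerations to the end, at the cost of having to justify the a.e.\ convergence of $\frac1n\log|h_n^{(i)}|$ to the potential differences --- which is the same $L^1_{\mathrm{loc}}$-plus-subsequence fact the paper invokes, so nothing is lost or gained.
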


          \begin{proof} For $n\in N_{j}$ one has 
	$$\frac{p_{n,j}^{(i+1)}(z)}{(n-i)p_{n,j}^{(i)}(z)}\to
	C^{(i+1)}(z):=\int_{\bC}\frac{d\mu_{j}^{(i)}(\zeta)}{z-\zeta}\text{ as }n\to\infty$$
	with convergence in $L^1_{loc}$. The well-known property of
	convergence in $L^1_{loc}$ implies that passing to a subsequence one
	can assume that the above convergence is actually the pointwise
	convergence almost everywhere in $\bC$. 
	It follows that
	
\begin{equation}
\label{eq:prod }
\frac{p_{n,j}^{(i)}(z)}{n^ip_{n,j}(z)}\to C^{(1)}(z)\cdots C^{(i)}(z),
\end{equation}
	pointwise almost everywhere in $\bC$. We claim that this limit is non-zero a.e.
	Given this, consider
	$$u_{j}^{(k)}(z)-u_{j}^{(0)}(z)=\lim_{n\to\infty,\,n\in N_{j}}\frac 1 n
\log \left\vert \frac {p_{n,j}^{(k)}(z)}{n(n-1)\ldots
(n-k+1)p_{n,j}(z)}\right\vert=0$$
almost everywhere in $\bC$. On the other hand, $u^{0}_{j}\ge
u^{(1)}_{j}\ge \ldots \ge u^{(k)}_{j}$ by Lemma~\ref{lm:hans1}. Hence
the potentials
$u^{(i)}_{j}$ are all equal and the corresponding measures
$\mu_{j}^{(i)}=\Delta u^{(i)}_{j}/2\pi$ are equal as well.

It remains to show the claim.
	Recall that $p_{n,j}(z)$
	satisfies the differential equation $T_{\la_{n,j}}p_{n,j}(z)=0$,
	i.e.,
	\begin{equation}Q_{k}(z)p^{(k)}_{n,j}(z)+\la_{n,j}Q_{k-1}(z)p^{(k-1)}_{n,j}(z)+\ldots+
	\la^k_{n,j}Q_{0}(z)p_{n,j}(z)=0.
	\label{eq:start1}
	\end{equation}
	
	Therefore,
	\begin{equation*}Q_{k}(z)\frac{p^{(k)}_{n,j}(z)}{n^kp_{n,j}(z)}+\frac{\la_{n,j}}{n}Q_{k-1}(z)\frac{p^{(k-1)}_{n,j}(z)}{n^{k-1}p_{n,j}(z)}+\ldots+
	\frac{\la^k_{n,j}}{n^k}Q_{0}(z)=0.
	\label{eq:start2}
	\end{equation*}
	Using the asymptotics $\la_{n,j}\sim \al_{j}n$ and the pointwise
	convergence a.e.  in (\ref{eq:prod }) we get a.e. in $\bC$ that
\begin{equation}Q_{k}(z)C^{(1)}(z)\cdots C^{(k)}(z)+\al_jQ_{k-1}(z)C^{(1)}(z)\cdots C^{(k-1)}(z)+\ldots+
	\al_j^kQ_{0}(z)=0.
	\label{eq:start3}
	\end{equation}
	Using the hypothesis that $Q_0\neq 0\neq \alpha_j$, we conclude that $C^{(1)}(z)\neq 0$ a.e. in $\bC$.
	
	To prove that $C^{(2)}(z)$ also is non-zero a.e., we consider the differential equation satisfied by $p'_{n,j}(z)$
	
	\begin{equation*}Q_{k}(z)p^{(k+1)}_{n,j}(z)+(Q'_k+\la_{n,j}Q_{k-1}(z))p^{(k)}_{n,j}(z)+\ldots+
	(\la^{k-1}_{n,j}Q'_1(z)+\la^k_{n,j}Q_{0}(z))p{'}_{n,j}(z)=0,
	\label{eq:startder}
	\end{equation*}
 obtained by differentiation of (\ref{eq:start1}).
	Repeating the previous analysis we get
	$$
	Q_{k}(z)\frac{p^{(k+1)}_{n,j}(z)}{n^kp{'}_{n,j}(z)}+\frac{(Q'_k+\la_{n,j}Q_{k-1}(z))}{n}\frac{p^{(k)}_{n,j}(z)}{n^{k-1}p'_{n,j}(z)}+\ldots+
	\frac{(\la^{k-1}_{n,j}Q'_1(z)+\la^k_{n,j}Q_{0}(z))}{n^k}=0.
	$$
	
Hence in the limit we have
	\begin{equation*}Q_{k}(z)C^{(2)}(z)\cdots C^{(k+1)}(z)+\al_jQ_{k-1}(z)C^{(2)}(z)\cdots C^{(k)}(z)+\ldots+
	\al_j^kQ_{0}(z)=0,
	\label{eq:start32}
	\end{equation*}
	which implies that $C^{(2)}(z)$  is again non-vanishing a.e. Similarily $C^{(i)}(z), \ i\geq 3$ are a.e. non-vanishing as well.
	This proves the claim.

The fact that the multiple  $\widetilde \Psi_{j}=C_{\mu}/\al_{j}$ of the Cauchy transform of
this common measure $\mu_{j}$ satisfies equation (\ref{eq:basic})
almost everywhere follows from  (\ref{eq:start3}), since the equality of the measures implies that $C_{\mu}=C^{(1)}=C^{(2)}=...$, and so
$$
Q_{k}(z)(C_{\mu}(z))^k+\al_jQ_{k-1}(z)(C_{\mu}(z))^{k-1}+\ldots+
	\al_j^kQ_{0}(z)=0.
	$$
	This is equivalent to the desired equation. 
	\end{proof}
	
	Note that the sequence of polynomials 
	$p_n(z):=z^n-1$ in the example before the proposition, for which the conclusion of the proposition does not hold,  satisfies the differential equation $zp_n^{(2)}(z) -(n-1) p'_n=0 $, and that the corresponding homogenized pencil $z\frac{d^2}{dz^2}-n\frac{d}{dz}$ has $Q_0=0$.

The proposition finishes the proof of Theorem \ref{th:main}.  For the sake of completeness let us  prove the result on $L^1_{\rm{loc}}$-convergence used in the preceding proof. 
 
  \begin{Lemma}Suppose that $\mu_n, \ n=1,2,\ldots$, are probability measures that converge as distributions to the measure $\mu$.
  Then $z^{-1}\ast \mu_n$ converges to $z^{-1}\ast \mu $ in $L^1_{\rm{loc}}$.
  \end{Lemma}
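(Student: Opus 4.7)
The plan is to regularize the singular kernel $1/w$ by a smooth cutoff, handle the smooth convolution via weak convergence, and control the remainder uniformly using the local integrability of $1/|w|$. Fix a compact $K\subset\bC$. In the application (and, if not given, as a consequence of Prokhorov tightness since $\mu$ is a probability measure), we may assume that all $\mu_n$ and $\mu$ are supported in a common compact set $B\subset\bC$; the case of tails outside $B$ is handled separately at the end by noting that on $\{|\zeta|\ge R\}$ the kernel $|z-\zeta|^{-1}$ is uniformly bounded for $z\in K$.

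For $\eps>0$ choose $\eta_\eps\in C_c^\infty(\bC)$ with $0\le\eta_\eps\le 1$, $\eta_\eps\equiv 1$ on $\{|w|\le\eps/2\}$, and $\supp\eta_\eps\subset\{|w|\le\eps\}$. Set $\phi_\eps(w):=(1-\eta_\eps(w))/w$, which is smooth on $\bC$. Since $1/w-\phi_\eps(w)=\eta_\eps(w)/w$ is supported in $\{|w|\le\eps\}$, Fubini gives
$$
\Bigl\|\tfrac{1}{z}\ast\mu_n-\phi_\eps\ast\mu_n\Bigr\|_{L^1(K)}
\le\int\!\!\int_K\frac{\eta_\eps(z-\zeta)}{|z-\zeta|}\,dA(z)\,d\mu_n(\zeta)
\le\sup_{\zeta}\int_{|w|\le\eps}\frac{dA(w)}{|w|}=2\pi\eps,
$$
uniformly in $n$, and the same estimate holds with $\mu$ in place of $\mu_n$.

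Next, for each fixed $\eps>0$, the map $(z,\zeta)\mapsto\phi_\eps(z-\zeta)$ is smooth, hence uniformly continuous on $K\times B$. Therefore the family $\{\zeta\mapsto\phi_\eps(z-\zeta):z\in K\}$ is equicontinuous and uniformly bounded on $B$. Weak (distributional) convergence $\mu_n\to\mu$ yields pointwise convergence of $\phi_\eps\ast\mu_n(z)$ to $\phi_\eps\ast\mu(z)$ for each $z$, and equicontinuity upgrades this to uniform convergence on $K$. In particular, $\|\phi_\eps\ast\mu_n-\phi_\eps\ast\mu\|_{L^1(K)}\to 0$ as $n\to\infty$.

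Combining via the triangle inequality: given $\de>0$, first pick $\eps>0$ so that the two remainder estimates contribute less than $\de/2$, then choose $N$ so large that the smooth part contributes less than $\de/2$ for $n\ge N$. This yields $\|z^{-1}\ast\mu_n-z^{-1}\ast\mu\|_{L^1(K)}<\de$, as required. The main (small) subtlety is the non-uniformly-bounded support case, which is handled by tightness: choose $R$ so large that $\mu_n(\{|\zeta|>R\})$ and $\mu(\{|\zeta|>R\})$ are both small for all large $n$, and use that $\sup_{z\in K,|\zeta|>R}|z-\zeta|^{-1}$ is finite; the rest of the argument then applies with $B=\{|\zeta|\le R\}$.
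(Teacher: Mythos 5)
Your proof is correct and follows essentially the same route as the paper's: split the kernel $1/w$ into a piece of small $L^1$-norm near the singularity (controlled uniformly in $n$ via Fubini) plus a smooth piece (handled by weak convergence of the measures). You are somewhat more careful than the paper about the compact-support/tightness issue and about upgrading pointwise to uniform convergence of the smooth part, but the underlying decomposition and the two key estimates are identical.
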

  \begin{proof}That $z^{-1}\ast \mu_n\in L^1_{\rm{loc}}$ is a consequence of the fact that $z^{-1}\in L^1_{\rm{loc}}$ combined with Fubini's theorem (see \cite{Ga}). Write $z^{-1}=f_\epsilon +g_\epsilon$, where 
  $g_\epsilon$ is a test function with compact support and $\| f_\epsilon\|_1\leq \epsilon$. Then by Fubini's theorem one has $\| f_\epsilon\ast \mu_n\|_1\leq \epsilon$. 
  Since the functions $g_\epsilon\ast \mu_i$ converge to $g_\epsilon\ast \mu$ uniformly hence also in $L^1_{\rm{loc}}$ the lemma follows.
    \end{proof}

\section{The support of generating measures: proof of Theorems
\ref{th:rikard} and \ref{th:generic}}
\label{support}

This section is devoted to  proving Theorems
\ref{th:rikard} and \ref{th:generic}, which requires a rather elaborate mixture of ideas from algebraic geometry and complex analytic techniques. The latter are based on the main results of \cite{BB}. 

\begin{definition}\label{def-pa}
A function $\Phi$ defined in a domain $U\subset \bC$ is called 
{\em piecewise analytic}, or a $PA$-{\em function} for short, if there exists a finite family 
of pairwise distinct analytic functions $\{A_{i}(z)\}_{i=1}^{r}$ in $U$ and 
an associated family of disjoint open sets $\{M_i\}_{i=1}^{r}$ with
$M_i\subset \Omega:=\{z\in U\mid A_{k}(z)\neq A_{l}(z), 1\le k\neq l\le r\}$, 
$1\le i\le r$, such that $\{M_i\}_{i=1}^{r}$ forms a covering of $U$ up to a 
set of Lebesgue measure $0$ and 
$$\Phi(z)=\sum_{i=1}^{r}A_{i}(z)\chi_{i}(z)\text{ in }U,$$
where $\chi_{i}$ is the characteristic function of the set 
$M_{i}$, $1\le i\le r$.
\end{definition}
Note that $PA$-functions need not be continuous -- 
this will certainly  not be the case in our situation. The following result singles out the properties of the limit 
$\Phi(z):=C_{\mu_j}(z)$  that  we will need, see Theorem~\ref{th:main}. 

\begin{Prop}
\label{Prop:PA} $\Phi(z)$ is a $PA$-function in any simply-connected open set $U$. More precisely, we have that $\Phi(z)=\sum_{i=1}^{r}A_{i}(z)\chi_{i}(z)$, where $\chi_{i}$ are the characteristic functions of certain subsets of $U$ (defined in the proof below) and $A_1,\ldots,A_r$ are different (analytic) branches of the equation
$$ \sum_{i=0}^k Q_{i}(z)\alpha_j^{-i}w^i=0$$ in $U$. Moreover, 
$\mu_j=\partial \Phi (z)/\partial {\bar z}\geq 0$ as a distribution
supported in $U$. 
\end{Prop}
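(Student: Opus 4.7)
The plan is to combine two principal inputs: (i) the almost-everywhere algebraic identity $\sum_{i=0}^{k} Q_{i}(z)\alpha_{j}^{-i}\Phi(z)^{i}=0$ satisfied by $\Phi=C_{\mu_{j}}$ (which follows from Proposition~\ref{pr:main} since $C_{\mu_{j}} = \alpha_{j}\Psi_{j}$ almost everywhere and $\Psi_{j}$ solves (\ref{eq:basic})), and (ii) the fact, also recorded in Proposition~\ref{pr:main}, that $\mathrm{supp}(\mu_{j})$ has vanishing Lebesgue area. Together these should organize $\Phi$ as a selection from the finitely many analytic branches of the algebraic equation on $U$.

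First I would set $V := U \setminus \mathrm{supp}(\mu_{j})$. Since $\mathrm{supp}(\mu_{j})$ is closed and has Lebesgue area zero, $V$ is open and of full measure in $U$, and the standard regularity of Cauchy transforms implies that $\Phi = C_{\mu_{j}}$ is holomorphic on $V$. After possibly removing from $U$ the nowhere dense analytic branching locus of the equation $\sum_{i=0}^{k} Q_{i}(z)\alpha_{j}^{-i}w^{i}=0$ (so that $U \subset \Omega$), the simple-connectedness of $U$ guarantees that this equation has $r \le k$ pairwise distinct global analytic branches $A_{1}(z),\ldots,A_{r}(z)$ on $U$.

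Next, on each connected component $W$ of $V$ the holomorphic function $\Phi|_{W}$ satisfies the algebraic identity $\sum Q_{i}(z)\alpha_{j}^{-i}\Phi^{i} = 0$ pointwise (extending from the a.e.\ identity by continuity), so at every $z \in W$ one has $\Phi(z) \in \{A_{1}(z),\ldots,A_{r}(z)\}$; by continuity together with the pairwise distinctness of the branches, $\Phi|_{W}$ coincides with exactly one $A_{i}$. I then define $M_{i} \subset V$ to be the union of those components of $V$ on which $\Phi \equiv A_{i}$. The sets $\{M_{i}\}_{i=1}^{r}$ are pairwise disjoint open subsets of $\Omega$ whose union $V$ has full measure in $U$, giving $\Phi(z) = \sum_{i=1}^{r} A_{i}(z)\chi_{M_{i}}(z)$ almost everywhere in $U$, which is precisely the desired $PA$-decomposition.

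Finally, the distributional positivity is immediate from the identity $\mu = \pi^{-1}\partial C_{\mu}/\partial \bar{z}$ (recalled in the Notation preceding Proposition~\ref{pr:main}): since $\mu_{j}$ is a probability measure, $\partial \Phi/\partial \bar{z}$ is a nonnegative multiple of $\mu_{j}$ and is therefore nonnegative as a distribution supported in $\overline{U} \cap \mathrm{supp}(\mu_{j}) \subset U$. The main obstacle in this strategy is really the input (ii) -- without the zero-area statement for $\mathrm{supp}(\mu_{j})$, holomorphy of $\Phi$ on a full-measure open subset of $U$ would fail and the partition into open sets $M_{i}$ would collapse; fortunately, this is supplied from outside by Proposition~\ref{pr:main}, so the remainder is a clean continuity/connectedness argument.
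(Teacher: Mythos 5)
Your proof is correct and follows essentially the same route as the paper: deduce from Theorem~\ref{th:main} that $\Phi=C_{\mu_j}$ satisfies the algebraic identity almost everywhere, partition $U$ according to which branch $A_i$ the function $\Phi$ agrees with, and read off positivity from $\mu_j=\pi^{-1}\partial C_{\mu_j}/\partial\bar z$. Your version is in fact slightly more careful than the paper's two-line argument, since by invoking the zero Lebesgue area of $\supp\mu_j$ and the holomorphy of $\Phi$ off the support you obtain genuinely open, disjoint sets $M_i$ as required by Definition~\ref{def-pa}, whereas the paper simply takes $M_i$ to be the level sets $\{C_{\mu_j}=A_i\}$.
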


\begin{proof} This is an immediate corollary of Theorem ~\ref{th:main} which implies that $\alpha_j^{-1}C_{\mu_j}(z)$ satisfies an algebraic equation~(\ref{eq:basic}) almost everywhere in $\bC$. Therefore in any simply-connected  $U$  one has that $\prod_{i=1}^k(C_{\mu_j}(z)-A_i(z))=0$ almost everywhere.  Let $M_i$ (with characteristic function $\chi_{i}$) be the subset of $U$ where $C_{\mu_j}(z)=A_i(z)$. Then we immediately get that  $\Phi(z):=C_{\mu_j}(z)=\sum_{i=1}^{r}A_{i}(z)\chi_{i}(z)$  and the subsets $M_i$ cover $U$ up to a set of Lebesgue measure zero.
\end{proof}

Using some of the main results of \cite{BB} (which was initially motivated by the present project) we now derive several consequences for the support of the measure $\mu_j$ and prove Theorem \ref{th:rikard}. 
If  $\Phi(z)$ is any PA-function with $r\ge 3$  we associate to each triple $(i,j,k)$ of distinct indices in 
$\{1,\ldots,r\}$ the following set
$$\Gamma_{i,j,k}=\{z\in U\mid\exists c\in\bR\text{ such that }
A_{i}(z)-A_{k}(z)=c(A_{j}(z)-A_{k}(z))\}.$$
It is easy to see that this definition only depends on the set $\{i,j,k\}$.
Clearly, if $r\leq 2$ these sets are empty.

\begin{Theorem}[see Theorem 1 and Corollary 2 of \cite{BB}] \label{locsub}
In the above notation assume that
$\partial \Phi/\partial {\bar z}\geq 0$ as a distribution
supported in $U$ and let $p\in U$ be such that
any sufficiently small neighborhood $N$ of $p$ intersects each
$M_{i}$, $1\le i\le r$, in a set of a positive Lebesgue measure. Define 
$H_i(z)=\Re\!\left[\int _p^zA_i(w)dw\right]$ and suppose 
also that
\begin{itemize}
\item[(i)] $A_{i}(p)\neq A_{j}(p)$ for $1\le i\neq j\le r$, i.e., 
$p$ is a non-singular point of $H_{i}-H_{j}$, 
\item[(ii)] there is at most one $\Gamma_{i,j,k}$ that contains $p$.
\end{itemize}
Then there exists a neighborhood of $p$ where $\partial \Phi/\partial {\bar z}$ is supported in a union of segments of $0$-level sets to the functions $H_i-H_j$, $1\le i\neq j\le r$.
\end{Theorem}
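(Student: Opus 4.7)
The plan is to combine the piecewise-analytic structure of $\Phi$ with the positivity of $\bar\partial\Phi$ in order to force any interface between two regions $M_i$ and $M_j$ to lie locally in the zero level set of $H_i-H_j$. As a first reduction, on the interior of each $M_i$ we have $\Phi=A_i$ analytic, so $\bar\partial\Phi=0$ there, and the support of $\bar\partial\Phi$ must lie in the interfaces between distinct $M_i$'s. Assumption (i) will let me shrink to a neighborhood $V$ of $p$ on which every $A_i-A_j$ is nonvanishing; then each set $Z_{ij}:=\{H_i-H_j=0\}\cap V$ is a smooth real-analytic arc through $p$, and assumption (ii) keeps the local arrangement of these arcs generic by ruling out simultaneous tangencies coming from more than one $\Gamma_{i,j,k}$ at $p$.

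The central local computation will be a distributional jump formula across a smooth piece $\gamma$ of the interface $\partial M_i\cap\partial M_j$. Using $\chi_i+\chi_j=1$ near $\gamma$ together with the standard identity $\bar\partial\chi_i=\tfrac{i}{2}\,dz|_{\partial M_i}$ (a direct consequence of Stokes' theorem), and parametrizing $\gamma$ by arc length $s\mapsto z(s)$ with unit tangent $\tau$ and $M_i$ on the positively oriented side, I would obtain
\[
\bar\partial\Phi\big|_\gamma \;=\;(A_i-A_j)\,\bar\partial\chi_i \;=\;\tfrac{i}{2}\big(A_i(z(s))-A_j(z(s))\big)\,\tau(s)\,ds.
\]
The hypothesis $\bar\partial\Phi\geq 0$ then forces this complex-valued density to be a nonnegative real multiple of $ds$, which is equivalent to $\Re\big((A_i-A_j)\tau\big)\equiv 0$ along $\gamma$. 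Since $\partial H_\ell/\partial z=A_\ell/2$ gives $\tfrac{d}{ds}(H_i-H_j)|_\gamma=\Re((A_i-A_j)\tau)$, this says exactly that $H_i-H_j$ is constant on $\gamma$. Because the accumulation hypothesis on $p$ forces every smooth piece $\gamma$ in a small enough $V$ to have $p$ in its closure, continuity together with the normalization $H_\ell(p)=0$ will pin the constant down to $0$, so $\gamma\subset Z_{ij}$.

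The main obstacle I expect is not the jump computation but the preliminary regularity step: showing that the interfaces between the (a priori only measurable) sets $M_i$ are locally finite unions of smooth arcs to which the formula above can be applied. This regularity, together with the accumulation of each interface at $p$, has to be extracted from the positivity of $\bar\partial\Phi$ (which forces its support to be $1$-rectifiable) combined with the a.e.\ agreement of $\Phi$ with an analytic function on each $M_i$. It is precisely here that assumption (ii) should enter: by forbidding two triples $Z_{ij},Z_{jk},Z_{ik}$ from coalescing tangentially at $p$, it prevents configurations in which distinct pieces of $\mathrm{supp}\,\bar\partial\Phi$ could stick together along whole curves and obstruct a clean pairwise labeling of the interface arcs.
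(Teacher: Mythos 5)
First, a point of comparison: the paper does not prove this statement at all --- it is imported verbatim from Theorem~1 and Corollary~2 of \cite{BB} --- so there is no internal proof to measure your argument against. Judged on its own terms, your proposal has a genuine gap, and to your credit you have located it yourself. The jump computation across a smooth interface arc is correct: $\bar\partial\chi_i=\tfrac{i}{2}\tau\,ds$ on $\partial M_i$, positivity then forces $\Re\bigl((A_i-A_j)\tau\bigr)=0$, and this is exactly $\tfrac{d}{ds}(H_i-H_j)=0$ along the arc. But this computation presupposes what the theorem is really about, namely that near $p$ the sets $M_i$ are domains whose mutual boundaries are finitely many smooth arcs across which $\Phi$ has classical one-sided limits. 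A priori the $M_i$ are only measurable sets, and your proposed mechanism for the regularity --- that positivity of $\bar\partial\Phi$ ``forces its support to be $1$-rectifiable'' --- is not a valid step: a positive measure can perfectly well charge a set of positive area ($\bar\partial\bar z=1$, for instance), so positivity alone gives nothing; it is only the combination of positivity with the piecewise-analytic structure that excludes this, and establishing that is the hard content of the theorem, not a preliminary.

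The missing idea, which is how \cite{BB} actually proceeds, is to pass to the continuous primitive $u$ with $2\,\partial u/\partial z=\Phi$, so that $\Delta u=2\,\bar\partial\Phi\ge 0$ makes $u$ subharmonic while $u$ agrees (up to additive constants) with the harmonic functions $H_i$ on the pieces $M_i$; one then proves that such a piecewise harmonic subharmonic function is, in a neighborhood of a point satisfying (i), (ii) and the positive-density condition, the pointwise maximum of a subfamily of the $H_i$. This representation delivers in one stroke everything your argument has to assume: that $\operatorname{supp}\bar\partial\Phi$ is a finite union of smooth arcs, that each arc separates exactly two branches, and that on each arc $H_i-H_j$ is not merely constant but equal to $0$ (since all $H_i$ vanish at $p$). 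On this last point your shortcut also fails: the hypothesis that every neighborhood of $p$ meets each $M_i$ in positive measure does not imply that every interface arc in a small disk has $p$ in its closure, so ``continuity pins the constant to $0$'' is unjustified without the maximum representation. Finally, condition (ii) is not about preventing arcs from ``sticking together''; it is needed to control the combinatorics of which triples of branches can have real-collinear differences at $p$, which is what makes the inductive construction of the maximum representation in \cite{BB} go through.
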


By Proposition \ref{Prop:PA} one can apply Theorem~\ref{locsub}  to $C_{\mu_j}(z)$. 
Let $\Gamma_2$ be the set of points where conditions (i) and (ii) in the above theorem fail to hold. In other words,  $\Gamma_2$ is the set of points  $p$ which are either one of the finite number of branch points  where $A_{i}(p)=A_{k}(p)$ for a pair $(i,k)$ of distinct indices, or satisfy $A_{i}(p)-A_{k}(p)\in \bR(A_{j}(p)-A_{k}(p))$ for at least two distinct triples
$\{ i,j,k\}$ in $\{1,\ldots,r\}$. 
Notice that the
original measure $\mu_j$ can be restored
        from its Cauchy transform by
        $$\mu_j=\frac {1}{\pi}\frac {\partial C_{\mu_j}(z)}{\partial \bar
        z},$$
        where $\frac 1 \pi\frac {\partial
          C_{\mu_j}(z)}{\partial \bar z}$ is considered as a distribution, see
          e.g.~\cite[Ch.~2]{Ga}. Then Theorem~\ref{locsub} implies that in a neighborhood of any point $p\notin\Gamma_2$ the support of $\mu_j$ coincides with a  smooth $0$-level curve of some function $\Re\left[\int_{p}^z A_{i}(w)dw-\int_{p}^z A_{j}(w)dw\right]$, $i\neq j$, which  
          proves parts A and B of 
   Theorem \ref{th:rikard}.  Part C follows easily from \cite[Theorem 2]{BB}.

We turn to  proving Theorem~\ref{th:generic}, i.e., to showing that generically, the set $\Gamma_2$ is finite. The plan is to describe the 
 locus $L$ of curves such that $\Gamma_2$ is finite. Now $L$ is a semi-algebraic set, that is, $L$ is given by real polynomial inequalities and equalities. We cannot describe these explicitly since even simple cases become quite involved, as shown by Example \ref{ex:3} below.
 Instead we will prove that $L$ contains an open subset (in the Zariski hence also in the Euclidean topology) essentially by analyzing the construction of $\Gamma_{i,j,k}$ in real-algebraic geometry terms, as opposed to semi-algebraic. It will then suffice to find just one example of a curve in $L$ in order to show that $L$ is non-empty and therefore also dense, which we do in Example \ref{ex:3}.
 
Clearly, 
$\Gamma_{i,j,k}$ is either a real analytic curve or else 
there exists $c\in\bR$ such that $A_{i}(z)-A_{k}(z)=c(A_{j}(z)-A_{k}(z))$
for all $z\in U$. This latter case is certainly non-generic, and we assume that all the $\Gamma_{i,j,k}$ are real analytic curves.

First define the following variety $Y$, which  parametrizes roots of equations of degree $n$.
Set $P=\sum_{i=0}^k Q_{i} y^i\in {\bC}[Q_1,\ldots,Q_k,y]$ and let $Y\subset {\bC}^{k+1}$ be defined by $P=0$ and $Q_k\neq 0$. Denote by $p_1:  {\bC}^{k+1}\to  {\bC}^{k}$ the projection on the $k$ first coordinates  $p_1:(Q_1,Q_2,\ldots,Q_k,y)\mapsto (Q_1,Q_2,\ldots,Q_k)$ and by $p_2$ the projection on the last coordinate. The image of $Y$ under $p_1$ is the open set 
$X\subset  {\bC}^{k}$ where $Q_k\neq 0$.  The reason for imposing this last condition is to guarantee that $p_1:Y\to X$ is a finite map of affine varieties.

We can also parametrize triples of roots (so we tacitly assume that $n\geq 3$) by means of the variety $Y$. Indeed, use $p_1$ to form the fiber product $Y\times_XY\times_XY\subset  (\bC^{k}\setminus V(Q_k))\times \bC^3$ and let 
$$V:=\{ (y_1,y_2,y_3)\in Y\times_XY\times_XY | y_i\neq y_j \ {\mathrm{ if}} \ i\neq j\}.$$ 
The Zariski closure $\bar V$ is an open and closed subset of $Y\times_XY\times_XY$.
The permutation group $S_3$ acts on the last three coordinates of $\bar V\subset (\bC^{k}\setminus V(Q_k))\times \bC^3$. The orbit space $Z:=\bar V//S_3$ can be embedded as a closed subset of $ (\bC^{k}\setminus V(Q_k))\times \bC^3$ since $\bC^3//S_3\cong \bC^3$. Hence $Z$ is a quasi-affine variety. There is a natural projection from $Z$ to $X$ induced by $p_1:Y\to X$, which we denote by
 $$
p: Z\subset (Y\times_XY\times_XY)//S_3\to X.
$$
This map is finite. 
The fibre of $p$ over a point $(Q_1,\ldots,Q_k)\in X$ consists of the orbits  of
 the finite set $\{ A_{i,\alpha}\times A_{j,\alpha}\times A_{l,\alpha}\mid 1\leq i,j,l\leq k\},$ where $A_{i,\alpha},\ 1\leq i\leq k$, are the distinct solutions to the equation $P(Q,y)=0$ under permutation of coordinates.
 
 Any affine subvariety $A$ of ${\bC}^n={\bR}^{2n}$ can be considered as the set of ${\bR}$-rational points of a real algebraic variety (cf.~\cite[Ch.~3.1]{Coste-Roy}) by taking the real and imaginary parts of the complex coordinates as new coordinates. For example, ${\bC}$ itself with the ring of functions ${\bC}[z]$ is the set of ${\bR}$-rational points of the real-algebraic variety ${\bR}^{2}$ with ring of functions ${\bR}[x,y]$ and the correspondence is given by $z=x+iy$. For a closed
 subvariety $A$ of ${\bC}^n$ the corresponding ideal  of $ {\bR} [x_1,\ldots,x_n,y_1,\ldots,y_n]$ is
  $$
  I_R(A):=\langle \Re P(x+iy),\Im P(x+iy)\  | \ P\in {\bC}[z],\ P(z)=0 \ \mathrm{if} \ z\in A\rangle.
  $$ 
  Denote by $O_R(A)={\bR}[x_1,\ldots,x_n,y_1,\ldots,y_n]/I_R(A)$ the ring of real-algebraic functions on $A$.  An open subset of ${\bC}^n$ defined by the non-vanishing of a polynomial $f$ may be considered as the set of ${\bR}$-rational points of the real-algebraic variety defined by ${\bR}[x_1,\ldots,x_n,y_1,\ldots,y_n]_{\tilde f}$, where ${\tilde f}=(\Re f)^2+(\Im f)^2$. More generally, any quasi-affine subvariety in ${\bC}^n$ can  be thought of as the set of ${\bR}$-rational points of a quasi-affine real-algebraic variety. We stress this since we will need the fact that  the property that an algebraic map of complex quasi-affine varieties $A\to B$ is finite is preserved when turning $A$ and $ B$ into real algebraic varieties (in the sense of algebraic geometry, meaning that $O_R(A)$ is a finite module over $O_R(B)$). 

We now return to the  global description of  the curves $\Gamma_{i,j,k}$. By the above we can consider $\bar V$ and  its orbit space $Z$ as real algebraic varieties, and as such $Z$ contains the real algebraic subset $\tilde \Gamma$ defined by 
\begin{multline*}
\tilde \Gamma=\{(y_1,y_2,y_3)\in Y\times_XY\times_XY\mid y_i=(\alpha, z, \gamma_i)\text{ and }\\
\Im(p_2(y_1)-p_2(y_2))\overline{(p_2(y_3)-p_2(y_2))}=0\}.
\end{multline*}

\begin{not+} 
Consider the algebraic curve in ${\bC}^2$ given by  
 \begin{equation}
 \label{eq:curvegen}
P(z,y)=\sum_{i=0}^k Q_{i}(z) y^i=0,
\end{equation} 
 where $\mathrm {deg}_zQ_i\leq i$ for $i<k$ and $\mathrm {deg}_zQ_k=k$.  
Let $Q_i(z)=\sum_{0\leq j\leq i}\alpha_{ij}z^j$, $0\le i\le k$, set $\alpha=(\alpha_{ij})$ and
 denote the curve (\ref{eq:curvegen}) corresponding to $\alpha=(\alpha_{ij})$ by $L_\alpha$.
 \end{not+}

 Without loss of generality we may assume that $\alpha_{k,k}=1$. Then polynomials $P(z,y)$
 are parametrized by  the $r$-tuple $\alpha=(\alpha_{i,j})\in {\bC}^r:={\bC}^k\times{\bC}^{k}\times{\bC}^{k-1}\times \cdots\times{\bC} $. In addition we will assume that the constant polynomial  $Q_0=\alpha_{0,0}\neq 0$. In particular,
 the set $V(Q_k,Q_{k-1},\ldots,Q_0)\subset {\bC}$ where all the $Q_i$'s vanish is empty. Let
  $c_\alpha: \bC\to \bC^{k}$ be the map $c_\alpha(z)=(Q_1(z),\ldots,Q_k(z))$. Then the pullback by $c_\alpha$ of $p_1:  {\bC}^{k+1}\to  {\bC}^{k}$ is the curve $L_\alpha\subset {\bC}^2\mapsto \bC\setminus V(Q_k)$, where $V(Q_k)$ is the zero set (``support'') of $Q_k$. Hence  over a point $z\in {\bC}$ the pullback of $Y\times_XY\times_XY\to X$ will locally 
  have the finite fibre 
$$\{ (z, A_{i}(z)\times A_{j}(z)\times A_{l}(z))\vert \ 1\leq i,j,l\leq k\},$$
where $A_i,\ 1\leq i\leq k$, are the distinct branches of the algebraic curve $L_\alpha$. 
 
 Define the real-algebraic variety $\Gamma$ as the image of $\tilde \Gamma$ under the finite map of real-algebraic varieties $\bar V\to Z$. 

  \begin{Lemma}\label{lm-15} In the above notation the following holds:
\begin{itemize} 
\item[(i)]
 $\tilde \Gamma$ is preserved by the action of $S_3$. 
\item[(ii)]
 Let $\alpha=(\alpha_{i,j})$ and
 $$c_\alpha: \bC\to \bC^{r+1}$$ be the map such that $c_\alpha(z)=(Q_1(z),\ldots,Q_k(z),z)$. Then
 the pullback $c_\alpha^{-1}(p(\Gamma))\subset \bC$ is locally 
 the same as $\bigcup\Gamma_{i,j,k}$, where the union is taken over all indices 
$i,j,l$ for the curve $L_\alpha$.
\end{itemize}
 \end{Lemma}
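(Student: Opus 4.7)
My plan for Lemma~\ref{lm-15} splits cleanly along the two parts.

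For part (i) the task is to verify that the defining condition of $\tilde\Gamma$, namely $\Im\!\left[(w_1-w_2)\overline{(w_3-w_2)}\right]=0$ with $w_i:=p_2(y_i)$, is invariant under permutations of $(w_1,w_2,w_3)$. I would give a direct algebraic check: writing $w_i=x_i+iy_i$, a short expansion shows
$$\Im\!\left[(w_1-w_2)\overline{(w_3-w_2)}\right]=\det\begin{pmatrix}1 & x_1 & y_1 \\ 1 & x_2 & y_2 \\ 1 & x_3 & y_3\end{pmatrix},$$
which is antisymmetric under row permutations and hence has an $S_3$-invariant zero locus. Geometrically this records the familiar fact that the three points $w_1,w_2,w_3$ are collinear in $\bC\cong\bR^2$, a condition that manifestly does not depend on how we label the points. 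A quick crosscheck with a single transposition (say, swapping $w_1$ and $w_2$ and observing that the sum of the two expressions equals $|w_1-w_2|^2\in\bR$) serves as an additional sanity check.

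For part (ii) I would unravel the definitions on a sufficiently small simply-connected open disc $U\subset\bC$ disjoint from the discriminant locus of $L_\alpha$. On such $U$ the polynomial $P(c_\alpha(z),y)$ has $k$ distinct simple roots $A_1(z),\ldots,A_k(z)$, giving single-valued analytic sections of $p_1\colon Y\to X$ pulled back along $c_\alpha$. The fibre of $\bar V\to X$ above $c_\alpha(z)$ thus consists precisely of ordered triples $(A_i(z),A_j(z),A_l(z))$ of distinct branches, and by construction such an ordered triple lies in $\tilde\Gamma$ iff the collinearity condition holds. Passing to the quotient $Z=\bar V\sslash S_3$ we obtain that $z\in c_\alpha^{-1}(p(\Gamma))$ iff \emph{some} unordered triple of distinct branches at $z$ is collinear, the passage to unordered triples being legitimate thanks to part (i). The collinearity condition $(A_i(z)-A_l(z))/(A_j(z)-A_l(z))\in\bR$ is exactly the defining condition of $\Gamma_{i,j,l}$, and taking the union over all unordered triples of indices gives
$$c_\alpha^{-1}(p(\Gamma))\cap U=\bigcup_{\{i,j,l\}}\Gamma_{i,j,l}\cap U,$$
as claimed.

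The main obstacle, and the reason the statement is phrased locally, is that the branches $A_i$ are only locally well-defined sections of $p_1$: analytic continuation around loops encircling the discriminant locus produces a monodromy action that permutes them. The $S_3$-quotient used to define $Z$ is exactly what absorbs this monodromy ambiguity, so the identification is globally correct on $Z$ itself; on $\bC$ it is only after restricting to simply-connected patches that one can name individual branches $A_{i},A_{j},A_{l}$ and write $\Gamma_{i,j,l}$. Checking that this bookkeeping is consistent---i.e., that the $S_3$-orbit structure on fibres of $\bar V\to Z$ matches the fact that $\Gamma_{i,j,k}$ depends only on the set $\{i,j,k\}$---is the one subtle point; once it is in place, both assertions reduce to the elementary algebraic identities recorded above.
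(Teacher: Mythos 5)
Your proposal is correct and follows essentially the same route as the paper: part (i) comes down to observing that $\Im\!\left[(w_1-w_2)\overline{(w_3-w_2)}\right]$ is an alternating function of the three points (the paper writes it as $\Im(u\bar v-u\bar w-w\bar v)$, you as the collinearity determinant — the same identity), and part (ii) is the observation that this vanishing is equivalent to $(u-w)/(v-w)\in\bR$, which is the defining condition of $\Gamma_{i,j,l}$, combined with unwinding the fibre-product and quotient constructions. Your extra care about local single-valuedness of the branches and the role of the $S_3$-quotient is a useful elaboration of what the paper leaves implicit, but it is not a different argument.
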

\begin{proof}
The identity $\Im(u-w)\overline{(v-w)}=\Im(u\bar v-u\bar w-w\bar v)$ gives  the first 
part. Now $\Im(u-w)\overline{(v-w)}=0$ if and only if $\frac{u-w}{v-w}\in \bR$, which together with the definitions yields statement (ii) in the lemma.
\end{proof}

Let $R_\Gamma$ be the $\bR$-algebra whose subset of $\bR$-rational points coincides with $\Gamma$ and let $R_X$ be the $\bR$-algebra that corresponds to $X$ in similar fashion. Then the map $p$ corresponds to the map $p^*:R_X\to R_\Gamma$ and the kernel $J$ of $p^*$ defines the image of $p$. Furthermore, since $p^*$ is finite the $R_X$-annihilator $J_2\supset J$ of $R_\Gamma/p^*(R_X)$, that is, 
$$J_2=\{r\in R_X \ | \ p(r)s=0,\ s\in R_\Gamma\}$$ 
exists and defines the {\em support} $V(J_2)$ of $R_\Gamma/p^*(R_X)$. It also defines   the locus of prime ideals $m$ in $\mathrm {Spec}\;  R_X$ such that the fibre $R_\Gamma/p^*(m)$ is not equal to $R_X/m$. Thus $V(J_2)$ defines the locus where the fibre has multiplicity (strictly) greater  than one. (The existence of such an ideal was the whole point of our excursion into  algebraic geometry.) Note that the fact that the multiplicity of the fibre over a $\bR$-rational point in $X$ is greater than one does not guarantee the existence of  $\bR$-rational points in the fibre.

Now we can finally reduce the proof of Theorem \ref{th:generic} to finding an explicit example. Consider the map
$c: {\bC}^r\times {\bC}\to {\bC}^k$ defined by $c(\alpha,z)=(Q_1(z),\ldots,Q_k(z))$. (The connection between $\alpha$ and $Q_i$ is as above.) Let $A={\bC}^r\times {\bC}\setminus V(Q_k)$.  Take the map $c^*:R_X\to R_A$ corresponding to $c$. The ideal $c^*(I_2)R_A$ describes the points in $\mathrm{Spec}\; R_A$ such that the fibre of the pullback has multiplicity greater than one. Denote the real coordinates in ${\bC}^r\times {\bC}$ by $x_{st}$, $y_{st}$, $x$, $y$ such that $\alpha_{st}=x_{st}+iy_{st}$ and $z=x+iy$. A point $\alpha=(\alpha_{st})\in {\bC}^r$ corresponds to a maximal ideal $m_{\alpha}$. By Lemma \ref{lm-15} 
the ring $$
S_{\alpha}:={\bR}[x,y]/m_{\alpha}c^*(I_2)R_A
$$ describes the set of points in $\bC $ that lie on two distinct $\Gamma_{i,j,l}$'s.

The following lemma (whose simple proof we included for completeness) proves that the property of $\alpha$ that $S_{\alpha}$ is finite as a $\bR$-vector space is  generic.

  \begin{Lemma}\label{lem-16}
 Suppose that $c: T\to S$ is a map of finitely generated $\bR$-algebras and let $U$ be the set of $\bR$-rational points $m$ in $\mathrm {Spec}\; T$ such that $S/c(m)$ is a finite $\bR$-algebra. Then $U$ is open. 
  \end{Lemma}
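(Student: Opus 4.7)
The plan is to interpret the statement geometrically and reduce it to Chevalley's upper semi-continuity of fibre dimension for a morphism of finite type between Noetherian schemes.

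First, I would set things up as follows. Since $c:T\to S$ is a ring homomorphism between finitely generated $\bR$-algebras, it induces a morphism of finite type $f:\Spec S\to \Spec T$ between Noetherian affine schemes. For an $\bR$-rational maximal ideal $m\subset T$, the scheme-theoretic fibre $f^{-1}(m)$ has coordinate ring $S\otimes_T(T/m)=S/c(m)S$, matching the object appearing in the statement of the lemma.

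The key observation is then the following elementary fact from commutative algebra, which I would isolate as a separate step: a finitely generated algebra $A$ over a field $k$ is a finite-dimensional $k$-vector space if and only if $\dim A=0$. The ``only if'' direction is trivial; conversely, Noetherian combined with Krull dimension zero gives Artinian, and an Artinian algebra of finite type over a field is a finite product of local Artinian rings whose residue fields are finite extensions of $k$ (by the Nullstellensatz for finite type $k$-algebras), each of finite length, hence of finite $k$-dimension. Applying this with $k=T/m=\bR$ and $A=S/c(m)S$ identifies
\[
U=\{m\in\Spec T\ \text{\R-rational}:\dim f^{-1}(m)\le 0\}.
\]

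Now I would invoke Chevalley's theorem on the upper semi-continuity of fibre dimension (see e.g.\ Hartshorne, Ex.~II.3.22 or EGA~IV$_3$, 13.1.3): for any morphism of finite type between Noetherian schemes, the function $y\mapsto\dim f^{-1}(y)$ is upper semi-continuous. Hence the locus $V:=\{y\in\Spec T:\dim f^{-1}(y)\le 0\}$ is Zariski open in $\Spec T$, and $U$ is its intersection with the set of $\bR$-rational points, which is open in the induced topology -- exactly what is needed for the argument that the non-empty semi-algebraic locus $L$ from the preceding discussion contains a Zariski open subset.

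I do not expect a serious obstacle: the argument is almost entirely bookkeeping once one aligns the real-algebraic language of the paper with the scheme-theoretic setting. The only point requiring care is verifying that the ``fibre'' $S/c(m)S$ appearing in the statement agrees with the fibre of the scheme morphism, but this is immediate since $\bR$-rationality of $m$ means $T/m=\bR$, so $S\otimes_T(T/m)=S/c(m)S$. If a more self-contained presentation is preferred, one can replace the appeal to Chevalley's theorem by a direct argument: choose generators $x_1,\dots,x_n$ of $S$ as a $T$-algebra; at $m_0\in U$ each $\bar x_i$ is integral over $\bR$, and lifting finitely many integral equations and the multiplication table of a chosen basis of $S/c(m_0)S$ produces a finitely generated $T$-submodule $M\subset S$ with $S=M+c(m_0)S$, after which standard determinantal openness of the surjectivity of $M\otimes_T(T/m)\to S/c(m)S$ in a neighbourhood of $m_0$ gives the result.
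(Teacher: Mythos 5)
Your first reduction --- that a finitely generated algebra over a field is module-finite iff its Krull dimension is at most zero, so that $U$ is the set of $\bR$-rational $m$ with $\dim f^{-1}(m)\le 0$ --- is correct. But the step everything then rests on, upper semicontinuity of $y\mapsto\dim f^{-1}(y)$ on the \emph{target}, is not a theorem for a general morphism of finite type. Chevalley's semicontinuity theorem (EGA IV, 13.1.3; Hartshorne, Ex.~II.3.22) concerns the function $x\mapsto\dim_x f^{-1}(f(x))$ on the \emph{source}; pushing the resulting closed sets forward to $\mathrm{Spec}\,T$ yields only constructible sets unless $f$ is closed (e.g.\ proper), which the morphism here is not. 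Concretely, take $T=\bR[s,t]$ and $S=\bR[s,s^{-1},x]$ with $c(s)=s$, $c(t)=0$. Then $S/c(m)S$ is the zero ring (hence finite) at every $m=(s-a,t-b)$ with $b\neq 0$ and also at the origin $m=(s,t)$ (since $s$ is a unit in $S$), whereas $S/c(m)S\cong\bR[x]$ at every $m=(s-a,t)$ with $a\neq 0$. Thus $\{y:\dim f^{-1}(y)\le 0\}$ contains the origin but no neighbourhood of it, and is not open. The same example defeats your fallback argument: because $S$ is not a finite $T$-module, the identity $S=M+c(m_0)S$ (here with $M=0$ at the origin) does not propagate to nearby $m$, and the fibres $S/c(m)S$ are not the fibres of any coherent module to which a determinantal openness criterion applies.

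The paper's proof is of an entirely different, module-theoretic nature: it filters $S$ by the finitely generated $T$-submodules $V^n$ spanned by products of at most $n$ algebra generators, forms the increasing chain of annihilator ideals $J_k=\mathrm{Ann}_T(V^{k+1}/V^k)$ with union $J$, and identifies the locus where some $T/m\otimes_T(V^{n+1}/V^n)$ vanishes with the complement of the closed set $V(J)$. The example above shows that even this must be read with care --- vanishing of some $T/m\otimes_T(V^{n+1}/V^n)$ is a sufficient but not in general a necessary condition for $S/c(m)S$ to be finite, so what the filtration argument actually yields is an \emph{open subset} $\mathrm{Spec}\,T\setminus V(J)\subseteq U$, which is exactly the kind of openly-verified finiteness criterion the application to $S_\alpha$ needs. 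In short, your route is not an alternative proof of the same statement: it invokes a false general principle, and some version of the filtration-by-finite-submodules argument (or an added hypothesis forcing the map to be closed) is genuinely required.
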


\begin{proof}
Let $V$ be a finite-dimensional vector space containing $1\in S$ that generates $S$, and let $V^n$ denote the set of sums of products of at most $n$ elements from $V$ with coefficients in $c(T)$.  The fact that  $V$ generates $S$ implies that $S=\bigcup_{n\ge 0} V^n$.
Now $S/c(m)$ is a finite $\bR$-algebra if and only if there exists a non-negative integer   $n$ such that 
$$T/m\otimes_T(V^{n+1}/V^n)=0.
$$
Let $J_k:=\mathrm{Ann}_T\ V^{k+1}/V^k$, $k\ge 0$. Since $T$ is a ring one has that $J_k\subset J_{k+1}$. Set $J=\bigcup_{k\ge 0} J_k$. Then $U$ consists of all $\bR$-rational points $m$ in $\mathrm {Spec}\; T$ which are not contained in the support $V(J)$. Since the latter is obviously closed this proves the lemma. 
\end{proof}

Applied to  our situation  this means that there exists  a set of algebraic equations in the variables $x_{st}$, $y_{st}$
which when violated will guarantee that the number of points in $\bC$ that lie on two distinct $\Gamma_{i,j,l}$'s is finite. Thus to 
conclude the proof  of Theorem \ref{th:generic} it only remains to find an example, which we do next.

\begin{Ex}\label{ex:3}
Let $b_1,\ldots,b_k$ be complex numbers. Consider the reducible plane curve $L$  given by 
$$
\prod_{i=1}^k((b_i-z)y-1)=0.
$$
This equation has the solutions $y=\frac{1}{b_i-z}$, $i=1,\ldots,k$, and it satisfies the conditions of (\ref{eq:curvegen}).
To describe the sets $\Gamma_{i,j,k}$ we use the following notation. 
Let $\alpha_i$, $\beta_i$, $i=1,2,3$, be our variables and define an action of the symmetric group $S_3$  on polynomial functions in these variables by permuting the pairs $(\alpha_i,\beta_i)$. If $M$ is a monomial let $\eta_M$ denote the alternating monomial function 
$$\eta_M=\sum_{\sigma\in S_3}(-1)^{sign(\sigma)}\sigma M.
$$
(For example, $\eta_{\alpha_1\beta_2}=\alpha_1\beta_2-\alpha_2\beta_1-\alpha_1\beta_3+\alpha_2\beta_3+\alpha_3\beta_1-\alpha_3\beta_2$.) 
 The set $\Gamma$ for the curve $L$ is reducible and consists of one component for each $\{i,j,k\}$. Simple calculations show that the set $\Gamma_{1,2,3}$ is  a circle in $ \bC$ consisting of all points $z=u+iv$ satisfying the equation
\begin{multline*}
p_{1,2,3}=-\eta_{\alpha_1\beta_2}(u^2+v^2)+(\eta_{\alpha_1\beta_2^2}+\eta_{\alpha_1\alpha_2^2})v
\\
+(\eta_{\beta_1^2\beta_2}+\eta_{\alpha_1^2\beta_2})u
+(\eta_{\alpha_1\beta_2\beta_3^2}+\eta_{\alpha_1\alpha_3^2\beta_2})=0. 
\end{multline*}
The center of this circle is given by 
$$-\left(\frac{\eta_{\beta_1^2\beta_2}+\eta_{\alpha_1^2\beta_2}}{2\eta_{\alpha_1\beta_2}},
\frac{\eta_{\alpha_1\beta_2^2}+\eta_{\alpha_1\alpha_2^2}}{2\eta_{\alpha_1\beta_2}}\right),$$ 
where it is understood that $b_j=\alpha_j+i\beta_j$, $j=1,2,3$, and that $\Gamma_{1,2,3}$ is non-empty (which is generically true). Hence for a generic set of complex numbers $b_1,\ldots,b_k$ all sets $\Gamma_{i,j,k}$ are circles with different centers, so that in particular the intersection of two sets $\Gamma_{i,j,k}$ corresponding to two different index sets $\{i,j,k\}$ is finite. $S_\alpha$ is the direct sum of the algebras $\bR[u,v]/I$, where $I$ is the ideal generated by two different quadrics $p_{i,j,k}$ and $p_{r,s,t}$, and it is therefore finite. This shows that the set $U$ in Lemma \ref{lem-16} is non-empty. \end{Ex}

The proof of Theorem  \ref{th:generic} is now complete.

\section {Final remarks and problems}
\label{sec:problems}
             \medskip
             
             The main topic of the paper is intimately related to the classical
   asymptotic theory of linear ordinary differential equations and 
   with the WKB-method. These  are covered in a huge
bulk of both older and modern literature including the classical
books by J.~Ecalle \cite {Ec}, A.~Erd\'elyi \cite {Er}, M.~Fedoryuk \cite {Fe},
B.~Malgrange \cite {Ma}, F.~Olver \cite {Ol}, Y.~Sibuya \cite {Sib},
W.~Wasow \cite {Wa1}-\cite {Wa2}
as well as important contributions by T.~Aoki, Y.~Takei, 
T.~Kawai, M.~Berry,
E.~Delabaere,   F.~Pham, Y.~Colin de Verdi\`ere, A.~Voros and many
others, see e.g.~\cite {Be}, \cite{BM}, \cite{De}, \cite {P}, \cite {Vo}.
   Polynomial solutions to linear differential equations are also of special
interest in connection with the so-called quasi-exactly solvable
models in quantum mechanics, see papers by A.~Turbiner, e.g.~\cite {Tu} and
references therein. They are also the classical object of study in the
theory of orthogonal polynomials which recently acquired  new
effective tools to investigate the asymptotical properties of families of such
solutions, see \cite {DZ}.

The starting point of our study  was a number of rather surprising numerical
experiments in calculating the zeros of polynomial eigenfunctions for the
usual spectral problem which led to several  conjectures
presented in \cite {MS}.  Most of these
conjectures were later proved in \cite {BR}. The idea to consider the
homogenized spectral problem comes mainly from  \cite {Fe}, \cite {Ol} and
references \cite {AKT1}, \cite {AKT2}. 
      
          By carefully going through the proofs of the above theorems on can see that actually these apply to a wider class of $ES$-pencils. Namely, consider $ES$-pencils of the form
$T_{\la}=\sum_{i=0}^k Q_{i}(z,\lambda)
\frac {d^i}{dz^i}$, 
where $Q_{i}(z,\lambda)$ are polynomials in $z$ and $\la$. Assume that
${\mathrm{deg}}_{\lambda}Q_{i}(z,\lambda)\leq k-i$ and that ${\mathrm{deg}}_{z}Q_{i}(z,\lambda)\leq i$.  Decompose $Q_i(z,\la)=Q_{i,0}(z)+\ldots+Q_{i,k-i}(z)\la^{k-i}$. If the homogenized pencil $\tilde T_\la=\sum_{i=0}^k Q_{i,k-i}(z)\la^{k-i}\frac {d^i}{dz^i}$ is of general type then all the results in this paper apply. However, for the sake of brevity we have chosen to focus only on the homogeneous case.


Let us briefly formulate a number of related problems.

\medskip
\noindent 
{\bf 1. Stokes lines.}
              One of the major objects in the classical
              asymptotic theory is the {\em  global
              Stokes line}  of a given linear differential equation, see e.g.~\cite {AKT1}, \cite {AKT2}.
The theory of {\em global Stokes lines}	    is  well understood in the physically important case of second  order linear odes.  By contrast with the latter case the development of
	    this theory for higher order equations experiences serious
	    difficulties. These are mostly due  to the 
appearance of new
              Stokes lines (discovered in \cite {BNR}) originating from the intersection points of the
              initial  Stokes lines which emanate from the turning points of the
              considered equation. 
	    The supports of the asymptotic measures $\mu_{j}$
	    studied in the present paper show
              many similar features to the global Stokes line of a linear
              differential operator (\ref{eq:diff}) which leads to the
              following conjecture.

     \begin{Conj}
The union  of the supports of the
              measures $\mu_{1},\ldots,\mu_{k}$ is a subset of the
              global Stokes line for the  differential operator (\ref{eq:diff}).

           \end{Conj}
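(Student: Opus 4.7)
My plan is to exploit Theorem~\ref{th:rikard} as the main bridge: its part~(B) essentially asserts that the support of $\mu_j$ is tangent to the ``Stokes direction'' of the pencil, and the conjecture should then be deduced by integrating this tangency condition and matching it against the standard definition of a Stokes line. First I would make precise, in the setting of the homogenized pencil (\ref{eq:diff}), what ``global Stokes line'' should mean: substituting the WKB-Ansatz $p(z)\sim\exp(\lambda S(z))$ into $T_\lambda p=0$ yields at leading order in $\lambda$ the eikonal equation $\sum_{i=0}^{k}Q_i(z)(S'(z))^i=0$, so the permissible phases $S'(z)$ are exactly the branches $\gamma_i(z)$ of the curve $\Gamma$ given by (\ref{eq:basic}). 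An initial Stokes line between branches $i$ and $j$ is then a level curve $\Re\bigl[\lambda\int_{z_\ast}^{z}(\gamma_i-\gamma_j)\,dw\bigr]=0$ emanating from a turning point $z_\ast$ (a branch point of $\Gamma$), together with the ``new'' Stokes lines of \cite{BNR} born at crossings of initial ones. Since for the $j$-th family we have $\lambda_{n,j}\sim\alpha_j n$, the relevant tangent direction is fixed by $\alpha_j(\gamma_i-\gamma_l)$, i.e.\ by $A_i-A_l$ in the notation of Theorem~\ref{th:rikard}.

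Next I would run the following deduction on ${\mathbf C}\setminus\Gamma_2$. By Theorem~\ref{th:rikard} the support $S$ of $\mu_j$ restricted to a neighbourhood of any $z_0\notin\Gamma_2$ is a finite union of smooth arcs $S_r$; at every $\tilde z\in S_r$ the tangent line is orthogonal to $\overline{A_1(\tilde z)-A_2(\tilde z)}$ for a distinguished pair of branches. The orthogonality of the tangent $dz$ to $\overline{A_1-A_2}$ is equivalent to $\Re\bigl[(A_1-A_2)\,dz\bigr]=0$, which is exactly the ODE satisfied along a level set of $\Re\bigl[\int(A_1-A_2)\,dw\bigr]=\Re\bigl[\alpha_j\int(\gamma_1-\gamma_2)\,dw\bigr]$. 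Since $\lambda_{n,j}/(\alpha_j n)\to 1$, the Stokes condition $\Re[\lambda_{n,j}\int(\gamma_1-\gamma_2)dw]=\text{const}$ degenerates, in the scaling used throughout the paper, to precisely this equation. Hence each arc $S_r$ lies inside a level curve of the Stokes phase for the pair $(\gamma_1,\gamma_2)$ singled out at $z_0$.

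The final step is to show that these level arcs are in fact \emph{Stokes arcs}, i.e.\ that they end at turning points (or at crossings that generate new Stokes lines). Here I would use Theorem~\ref{th:c}: the supports are uniformly bounded, so each arc can be continued until it either hits a branch point of $\Gamma$, accumulates on $\Gamma_2$, or meets another arc; in any of those outcomes the endpoint belongs to the closure of the standard Stokes skeleton of (\ref{eq:diff}), because the set of turning points of the pencil coincides with the branching locus of $\Gamma$ and the multiple-tangency set $\Gamma_2$ consists either of branch points or of points where two distinct tangency conditions $A_i-A_l\in \mathbf{R}(A_m-A_l)$ hold simultaneously, the latter being precisely the collisions that produce the ``new'' Stokes lines of Berk--Nevins--Roberts \cite{BNR}. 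Gluing these local descriptions and noting that by Proposition~\ref{pr:main} the measures $\mu_j$ are compactly supported so only finitely many arcs are involved, one would obtain the desired containment.

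The hard part, and the real source of difficulty, is the last step: making the passage from ``contained in a level set of the Stokes phase'' to ``contained in the global Stokes graph'' rigorous. For $k=2$ this is essentially automatic because Stokes lines are globally defined by a single well-understood picture, but for $k\ge 3$ one has to control exactly which level arcs appear, handle the birth of new Stokes lines at crossings, and rule out the possibility that $\supp\mu_j$ contains a piece of an ``anti-Stokes'' or inactive branch. A possible route is to combine the subharmonic-potential inequalities from Lemma~\ref{lm:hans1} (which constrain how $\supp\mu_j$ can terminate) with the local classification of $\Gamma_2$ developed in \S\ref{support}, and to use the WKB connection formulas of \cite{AKT1,AKT2} to certify that the active Stokes curves of the pencil are exactly those on which the relevant exponential dominance is exchanged, i.e.\ those that can carry the jump of $C_{\mu_j}$ across $\supp\mu_j$ recorded in Proposition~\ref{Prop:PA}.
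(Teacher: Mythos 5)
The statement you are addressing is not proved in the paper at all: it appears in \S 7 as an open conjecture, explicitly motivated only by the observation that the supports of the $\mu_j$ ``show many similar features'' to global Stokes lines. So your proposal can only be judged on its internal soundness, and it contains a genuine gap that you yourself flag but do not close.

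Your first two steps are correct and essentially repackage what the paper already establishes: the WKB eikonal equation for (\ref{eq:diff}) is (\ref{eq:basic}), and Theorem~\ref{th:rikard}(B) (via Theorem~\ref{locsub}) shows that away from $\Gamma_2$ each arc of $\supp\mu_j$ satisfies $\Re\bigl[(A_i-A_l)\,dz\bigr]=0$, i.e.\ lies in a level curve of $\Re\int\alpha_j(\gamma_i-\gamma_l)\,dw$. The gap is in the passage from ``level curve of the Stokes phase'' to ``Stokes line''. The plane is foliated by a one-parameter family of level curves of $\Re\int(\gamma_i-\gamma_l)\,dw$; a Stokes line is the particular leaf through a turning point (together with the new lines of \cite{BNR} born at crossings), and a priori an arc of $\supp\mu_j$ could sit on a leaf that never reaches the branching locus of $\Gamma$. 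To close the argument you would need (a) that every arc of $\supp\mu_j$, when continued, terminates at a branch point of $\Gamma$ or at a legitimate birth point of a new Stokes line, and (b) that the level-set constant is the one selected by that turning point. Neither follows from the tools you invoke: Theorem~\ref{th:c} only bounds the arcs inside a disk and does not force them to hit the branching locus, and $\Gamma_2$ is defined by the degeneration of the tangency condition ($A_i-A_k\in\bR(A_j-A_k)$ for two distinct triples), which is not the same as the Berk--Nevins--Roberts crossing condition, so points of $\Gamma_2$ need not lie on the Stokes skeleton. The endpoint behaviour of $\supp\mu_j$ is exactly what the paper leaves open (the claim after Fig.~\ref{fig1} that the supports end at the branch points is numerical, not proved), so your argument inherits the open problem rather than resolving it.
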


\medskip 
\noindent
{\bf 2. Convergence and topology of supports.} 
The main open questions related to the results of the present paper are Conjecture~1 (see the introduction) 
and a complete description of the topological properties of the support of the limiting measures $\mu_j$, $j=1,\ldots.,k$. (We strongly believe that $\mu_j$ is unique for each $j$.) In particular, the following problem appears to be of fundamental importance and all the numerical evidence we obtained so far is in favor of this conjecture. 

\begin{Conj} 
For each $j$ the set $\bC\setminus supp(\mu_j)$ is connected. 
\end{Conj}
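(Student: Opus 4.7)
The plan is to argue by contradiction: assume that $\bC\setminus\mathrm{supp}(\mu_j)$ has a bounded connected component $U$. The strategy is to extract enough analytic information about $C_{\mu_j}$ on $U$ to contradict the global structure of the algebraic curve $\Gamma$ together with the positivity of $\mu_j$ along $\partial U$.

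First I would record the basic local description. Since $\mu_j$ vanishes on $U$, the Cauchy transform $C_{\mu_j}$ is holomorphic there, and by Proposition~\ref{Prop:PA} combined with the algebraic equation (\ref{eq:basic}), one has $C_{\mu_j}|_U = \alpha_j A_{i_U}$ for a single branch $A_{i_U}$ of $\Gamma$ (selected on a simply connected subset and then extended, using that $U$ is a component of a complement of a closed set of planar Lebesgue measure zero, by Theorem~\ref{th:rikard}). On the unbounded component $U_\infty$ the Cauchy transform coincides with the distinguished branch $\alpha_j\Psi_j$ characterized by its behavior at $\infty$ as in Theorem~\ref{th:main}. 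Thus one may assign to each component of $\bC\setminus\mathrm{supp}(\mu_j)$ a single sheet of the covering $\Gamma\to\bCP$, and two components sharing a boundary arc must carry two distinct sheets.

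Next I would analyze $\partial U$ via Theorem~\ref{th:rikard}. Away from the exceptional real-analytic set $\Gamma_2$, each boundary arc $S_r$ is smooth and is a level curve of $\Re\!\int(A_{i_U}-A_{i(V)})\,dw$ for the sheet index $i(V)$ of the adjacent component $V$, with density $|A_{i_U}-A_{i(V)}|/(2\pi)$, which is strictly positive on each such arc. Traversing a single connected piece of $\partial U$, the sheet index $i(V)$ on the outside side is locally constant; so each connected component of $\partial U$ is a closed loop contained in the zero level set of a single multivalued ``height'' function $H(z):=\Re\!\int(A_{i_U}-A_{i(V)})\,dw$. I would then try to show that such a compact level loop is impossible: either $H$ would have to be globally single valued on the bounded region $\bar U$, forcing period integrals of the meromorphic differential $(A_{i_U}-A_{i(V)})dz$ over cycles enclosing the branch points of $\Gamma$ inside $\bar U$ to vanish, or $H$ would be genuinely multivalued there, in which case a monodromy computation should force a change of sheet incompatible with the consistent labelling established above.

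The main obstacle is making the last step rigorous and global rather than local. In the classical $k=2$ case connectivity is usually obtained via an extremal (max--min) characterization of $\mu_j$ in potential-theoretic terms, in the spirit of Gonchar--Rakhmanov and \cite{BR}; no such variational description of $\mu_j$ is currently available for higher order pencils of type~(\ref{eq:diff}), and without one it is genuinely conceivable that the multivaluedness of $H$ and the presence of branch points of $\Gamma$ conspire to produce a closed level trajectory enclosing a non-empty region. A complete proof will therefore likely require either (a) a period/residue computation that rules out such closed trajectories using the specific polynomial structure of (\ref{eq:basic}), or (b) the development of an appropriate extremal characterization of $\mu_j$ that makes connectivity of its complement an immediate corollary.
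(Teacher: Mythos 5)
This statement is not proved in the paper: it appears in \S 7 among the ``Final remarks and problems'' as an open conjecture, supported only by numerical evidence. So there is no proof of the authors' to compare yours against, and the relevant question is whether your proposal actually closes the problem. It does not, and you say so yourself: the decisive step --- ruling out a compact component of $\partial U$ sitting inside a level set of $H(z)=\Re\int (A_{i_U}-A_{i(V)})\,dw$ --- is left as an unproved alternative between a period computation you have not carried out and an extremal characterization of $\mu_j$ that does not exist for $k>2$. That is precisely the content of the conjecture, so the argument as written is a reduction, not a proof.

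Beyond the acknowledged gap, two earlier steps are also shakier than you present them. First, the assignment of a single sheet of $\Gamma\to\bCP$ to each component $U$ of the complement is clean only when $U$ is simply connected and avoids the branch points of $\Gamma$; on a bounded multiply connected component, $C_{\mu_j}$ is a single-valued holomorphic section of $\Gamma$ over $U$, but identifying it with ``one branch $A_{i_U}$'' globally requires an argument about monodromy that you have not supplied. Second, your claim that along a connected piece of $\partial U$ the outside sheet index $i(V)$ is locally constant uses Theorem~\ref{th:rikard}, which only applies off the exceptional set $\Gamma_2$; even in the generic case where $\Gamma_2$ is finite, distinct arcs of $\partial U$ can border distinct components carrying distinct sheets and can meet at points of $\Gamma_2$, so $\partial U$ need not lie in the zero level set of a single function $H$. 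Your strategy (trajectory-structure plus period obstructions, in the spirit of quadratic-differential/S-curve arguments for $k=2$) is a sensible line of attack on this open problem, but it is a programme, not a proof.
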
 

\begin{Prob}
Is it true that the support of each $\mu_j$ is topologically a tree?
\end{Prob}

In general, it would be highly desirable to describe the stratification of the space $ES_k$ according to the topology of the supports of the $k$-tuple of measures $\mu_1,\ldots,\mu_k$.  

\medskip
\noindent
{\bf 3. Other classes of operators.} A further natural question is as follows.

\begin{Prob}
Which of the above results hold for operators
(\ref{eq:diff}) of nongeneral type?
\end{Prob}

Numerical experiments carried out in \cite{Berg} show the existence of
asymptotic root-counting measures even for such operators. However, these
measures usually have noncompact supports, which leads to considerable 
additional complications since the arguments in all our proofs do not 
extend {\em mutatis mutandis} to such situations.

	\medskip
	\noindent 
{\bf 4. Positive Cauchy transforms.} As we already mentioned in the introduction, Problem~\ref{pb2} is completely solved only for rational functions. 

\begin{Prob}
Characterize algebraic germs of real (respectively, positive) Cauchy transforms.
\end{Prob}


\begin{thebibliography}{30}


                \bibitem {AKT1} T.~Aoki, T.~Kawai, Y.~Takei,
{\em New turning points in the exact WKB analysis for higher order
ordinary differential equations},
Analyse alg\'ebrique des perturbations singuli\`eres. I, Hermann, Paris, 
pp.~69--84, 1994.


\bibitem {AKT2}  T.~Aoki, T.~Kawai, Y.~Takei,
{\em On the exact WKB analysis for the third order ordinary
differential equations with a large parameter},
             Asian J. Math. {\bf 2} (1998), 625--640.



\bibitem {Berg}
T.~Bergkvist, {\em On asymptotics of polynomial eigenfunctions  for exactly solvable differential operators}, J. Approx. Theory {\bf 149} (2007), 151--187.

\bibitem{BR} T.~Bergkvist, H.~Rullg\aa rd, {\em On polynomial
eigenfunctions for a class of differential operators},
             Math. Res. Lett. {\bf 9} (2002), 153--171.



\bibitem {BNR} H.~Berk, W.~Nevins, K.~Roberts, {\em New Stokes lines
in WKB-theory}, J. Math. Phys. {\bf 23} (1982), 988--1002.

\bibitem {Be}  M.~Berry, {\em
Stokes' phenomenon: smoothing a Victorian discontinuity},  Publ. Math.
IHES {\bf 68} (1989), 211--221.

\bibitem {BM}  M.~Berry, K.~Mount, {\em
Semiclassical approximation in wave mechanics}, Rep. Prog. Phys. 
{\bf 35} (1972), 315--397.
             
             
\bibitem{Coste-Roy} J.~Bochnak,  M.~Coste, M.~F.~Roy,  {Real algebraic geometry}. Translated from the 1987 French original. Revised by the authors. Ergebnisse der Mathematik und ihrer Grenzgebiete Vol. {\bf 36}, Springer-Verlag, Berlin, x+430 pp., 1998.     

\bibitem {Bo} S.~Bochner, {\em \"Uber Sturm-Liouvillesche
polynomsysteme},  Math. Z. {\bf 89} (1929), 730--736.

\bibitem {BB} J.~Borcea, R.~B\o gvad, 
{\em Piecewise harmonic subharmonic functions and positive Cauchy transforms},  to appear in Pacific J. Math.; preprint arXiv:math/0506341.         

\bibitem{DZ}  P.~Deift, X.~Zhou,
{\em A steepest descent method for oscillatory Riemann-Hilbert
problems. Asymptotics for the MKdV equation}, Ann. of Math. {\bf 137} (1993), 295--368.

\bibitem {De} E.~Delabaere, D.~T.~Trinh, {\em Spectral analysis of the complex
cubic oscillator},  J. Phys. A  {\bf 33}  (2000), 8771--8796.



\bibitem {Ec} J.~Ecalle, { Les fonctions r\'esurgentes}.
Publications Math\'ematiques d'Orsay Vol. {\bf 81-05}, 1981.

\bibitem {Er} A.~Erd\'elyi, { Asymptotic expansions}. Dover
Publications Inc., New York, vi+108 pp., 1956. 


\bibitem {Fe} M.~Fedoruyk, { Asymptotic analysis. Linear ordinary
differential equations.}
     Springer-Verlag, Berlin, viii+363 pp., 1993. 


\bibitem {Ga} J.~Garnett, { Analytic Capacity and Measure}. Lect.
Notes Math. Vol. {\bf 297}, Springer-Verlag, iv+138 pp., 1972. 

\bibitem{H}
L.~H\"ormander, The analysis of linear partial differential
         operators. I. Distribution theory and Fourier analysis.
         Reprint of the second (1990) edition. Classics in Mathematics.
         Springer-Verlag, Berlin, 2003.



\bibitem {HL}
K.~Hensel, G.~Landsberg,
{ Theorie der algebraischen Funktionen einer Variablen und
ihre Anwendung auf algebraische Kurven und Abelsche Integrale.}
Chelsea Publishing Co., New York, xvi+707 pp, 1965.

\bibitem {Hu} M.~Hukuhara, {\em Sur la th\'eorie des \'equations
diff\'erentielles ordinaires},  J. Fac. Sci. Univ. Tokyo Sect. I {\bf 7} (1958),  483--510.


\bibitem{Kaplansky} I.~Kaplansky, {An introduction to differential algebra.}
Second edition. Actualit\'es Scientifiques et Industrielles, No.
1251. Publications de l'Institut de Math\'ematique de l'Universit\'e de 
Nancago, No. V. Hermann, Paris, 64 pp., 1976. 

\bibitem {Kr} H.~L.~Krall, {\em Certain differential equations for
             Chebyshev polynomials}, Duke Math. J. {\bf 4} (1938), 705--718.

\bibitem {Ma} B.~Malgrange, { \'Equations diff\'erentielles
\`a coefficients polynomiaux}, Progr. Math. Vol. {\bf 96}, Birkh\"auser,
232 pp., 1991.


\bibitem{MS} G.~Masson, B.~Shapiro, {\em On polynomial
eigenfunctions of a hypergeometric-type operator}, Experiment. Math. {\bf 10} (2001), 609--618.




\bibitem {Ol} F.~W.~J.~Olver, {Asymptotics and special functions.}
Computer Science and Applied Mathematics.
Academic Press, New York-London, xvi+572 pp., 1974. 


\bibitem {P} F.~Pham, {\em Multiple turning points in exact WKB analysis
(variations on a theme of Stokes),}
      Toward the exact WKB analysis of differential equations, linear or
non-linear (Kyoto, 1998),
      pp.~71--85, Kyoto Univ. Press, Kyoto, 2000.




\bibitem {Sib} Y.~Sibuya, { Global theory of a second order linear
differential equation
with polynomial coefficients}. North Holland Publ., 1975.




\bibitem {Tu} A.~Turbiner, {\em Lie algebras and linear operators with
invariant subspaces}, in ``Lie Algebras, Cohomologies and New Findings
in Quantum Mechanics'', Contemp. Math. {\bf 160}, pp.~263--310, 1994.

\bibitem{Vo} A.~Voros, {\em The return of the quartic oscillator:
the complex WKB method.}  Ann. Inst. H. Poincar\'e Sect. A (N.S.)
{\bf 39}  (1983),  211--338.



\bibitem{Wa1} W.~Wasow {Asymptotic expansions for ordinary
differential equations}. Pure and Applied Math. Vol. {\bf 14}, 362 pp., 
New York, Interscience, 1965.


\bibitem {Wa2}  W.~Wasow, {Linear turning point theory.} Applied
Mathematical Sciences Vol. {\bf 54}.
Springer-Verlag, New York, ix+246 pp., 1985. 



\end{thebibliography}
\end{document}